\title{An Abel map to the compactified Picard scheme realizes Poincar\'e duality}
\author{Jesse Leo Kass}
\address{Current: J.~L.~Kass, Dept.~of Mathematics, University of South Carolina, 1523 Greene Street, Columbia, SC 29208}
\address{Former: J.~L.~Kass, Leibniz Universit\"{a}t Hannover, Institut f\"{u}r algebraische Geometrie, Welfengarten 1, 30060 Hannover, Germany}
\email{kassj@math.sc.edu}
\urladdr{http://www2.iag.uni-hannover.de/~kass/}
\author{Kirsten Wickelgren}
\address{Current: K.~Wickelgren, School of Mathematics, Georgia Institute of Technology, 686 Cherry Street, Atlanta, GA 30332}
\address{Former: K.~Wickelgren, Dept.~of Mathematics, Harvard University, One Oxford Street, Cambridge, MA 02138}
\email{wickelgren@post.harvard.edu}
\urladdr{http://people.math.gatech.edu/~kwickelgren3/}
\newtheorem{tm}[subsection]{Theorem}
\newtheorem{pr}[subsection]{Proposition}
\newtheorem{lm}[subsection]{Lemma}
\newtheorem{cor}[subsection]{Corollary}
\newtheorem{df}[subsection]{Definition}
\newtheorem{rmk}[subsection]{Remark}
\newtheorem{hyp}[subsection]{Hypothesis}
\newtheorem{exa}[subsection]{Example}
\newtheorem{claim}[equation]{Claim}
\renewcommand{\mathbf}{\mathbb}
\newcommand{\ab}{{\rm ab}}
\newcommand{\rH}{{\rm H}}
\renewcommand{\Z}{\mathbb{Z}}
\newcommand{\Zhat}{\hat{\mathbb{Z}}}
\newcommand{\kbar}{\overline{k}}
\newcommand{\G}{\mathbb{G}}
\newcommand{\proj}{\mathbb P}
\newcommand{\Gal}{\operatorname{Gal}}
\newcommand{\Tr}{\operatorname{Tr}}
\newcommand{\Hom}{\operatorname{Hom}}
\newcommand{\Pic}{\operatorname{Pic}}
\newcommand{\NS}{\operatorname{NS}}
\newcommand{\Spec}{\operatorname{Spec}}
\newcommand{\Pres}{\operatorname{Pres}}
\newcommand{\Mor}{{\text{Mor}}}
\renewcommand{\cl}{\operatorname{cl}}
\newcommand{\Ker}{\operatorname{Ker}}
\newcommand{\red}{\operatorname{red}}
\newcommand{\Coker}{\operatorname{Coker}}
\newcommand{\Jacbar}{\operatorname{\overline{P}ic}}
\newcommand{\Jac}{\operatorname{Pic}}
\newcommand{\bound}{\partial X}
\newcommand{\bbP}{\mathbf{P}}
\newcommand{\bbZ}{\mathbf{Z}}
\newcommand{\calO}{{ \mathcal O}}
\newcommand{\calV}{{ \mathcal V}}
\newcommand{\calW}{{ \mathcal W}}
\newcommand{\PtoPic}{p_1}
\newcommand{\PtoJbar}{p_2}
\newcommand{\Xsing}{X^{+}}
\newcommand{\Abel}{\operatorname{Ab}}
\newcommand{\AbelTilde}{\operatorname{\widetilde{A}b}}
\newcommand{\AbelLift}{ \operatorname{Pres}(\Abel)}
\newcommand{\can}{\sigma}
\newcommand{\hidden}[1]{\footnote{Hidden:  #1}}
\renewcommand{\hidden}[1]{}
\begin{document}

\begin{abstract}    

For a smooth algebraic curve $X$ over a field, applying $\rH_1$ to the Abel map $X \to \Jac X/\partial X$ to the Picard scheme of $X$ modulo its boundary realizes the Poincar\'e duality isomorphism $$\rH_1(X, \Z/ \ell) \to  \rH^1(X/ \partial X, \Z/\ell(1) )  \cong \rH^1_c(X, \Z/\ell(1)).$$ We show the analogous statement for the Abel map $X/\partial X \to \Jacbar X/\partial X$ to the compactified Picard, or Jacobian, scheme, namely this map realizes the Poincar\'e duality isomorphism $\rH_1(X/ \partial X, \Z/\ell) \to \rH^1(X, \Z/\ell(1))$. In particular, $\rH_1$ of this Abel map is an isomorphism. 

In proving this result, we prove some results about $\Jacbar$ that are of independent interest.  The singular curve $X/\partial X$ has a unique singularity that is an ordinary fold point, and we describe the compactified Picard scheme of such a curve up to universal homeomorphism using a presentation scheme. We construct a Mayer--Vietoris sequence for certain pushouts of schemes, and an isomorphism of functors $\pi_1^{\ell} \Pic^0(-) \cong H^1(-,\Z_{\ell}(1)).$ 

\end{abstract}

\maketitle


\section{Introduction}
In this paper we extend a classical relation between Poincar\'{e} duality and the Abel map.  The classical Abel map  of a smooth proper curve over a field $k$ is a map, $\Abel$, from the curve $X$ to its Picard, or Jacobian, scheme $\Jac X$.  The Picard scheme $\Jac X$ is the moduli space of invertible sheaves, and $\Abel$ sends a point $x$ to the sheaf  $\calO_{X}(x)$ of rational functions with at worst a pole at $x$.  The Abel map induces a homomorphism from the homology of $X$ to the homology of $\Jac X$. Moreover, there is a canonical isomorphism $\can$ betweeen the homology of $\Jac X$ and the cohomology of $X$, making the composition  $\can^{-1} \circ H_1(\Abel) $ a homomorphism between the homology and the cohomology of $X$.  This composition is exactly the Poincar\'{e} duality isomorphism
\begin{equation} \label{Eqn: PDForProper}
	\text{Poincar\'{e} Duality} = \can^{-1} \circ \rH_{1}(\text{Abel Map}).
\end{equation}
In other words, the Abel map of a smooth proper curve realizes the Poincar\'{e} duality isomorphism. In this paper we extend this result by showing that the Poincar\'{e} duality isomorphism associated to a smooth nonproper curve is realized by the Abel map from an explicit singular proper curve $\Xsing$ to its compactified Picard scheme. The singular curve $\Xsing$ plays the role of the curve modulo its boundary, as in topological Poincar\'e duality of manifolds. 

When $X$ is smooth and proper, the isomorphism $\can$ is constructed as follows.  Fix a prime $\ell$ distinct from the characteristic $ \operatorname{char} k$.  The \'{e}tale cohomology of $X$ with coefficients in the Tate twist $\bbZ/\ell(1) = \mu_{\ell}$, or $\ell$th roots of unity, is recorded by $\Jac X$ in the following manner.  The points of $\Jac X$ are invertible sheaves  or, equivalently, $\G_m$-torsors and thus correspond to elements of $\rH^1(X, \G_m)$. The inclusion $\bbZ/\ell(1) \to \G_{m}$ induces an isomorphism 
\begin{equation} \label{Eqn: TorsionToCoh}
	 \rH^1(X_{\kbar}, \Z/\ell(1)) \stackrel{\cong}{\to} \Jac X [\ell] (\kbar)
\end{equation}
from \'{e}tale cohomology to the $\ell$-torsion subgroup of the group of $\kbar$-valued points of $\Jac X$, as is seen with the long exact sequence associated to the Kummer exact sequence 
$$
	\xymatrix{1 \ar[r] & \Z/\ell(1) \ar[r] & \G_m \ar[rr]^{z \mapsto z^{\ell}} && \G_m \ar[r] & 1}.
$$ 
Thus, the moduli definition of the Picard scheme identifies its torsion points with $\rH^1(X_{\kbar}, \Z/\ell (1))$.

The structure of the Picard scheme computes these torsion points in terms of its fundamental group. Let $\Jac^0 X$ denote the connected component of $\Jac X$ containing the identity $e$, and note that the open immersion $\Jac^0 X \to \Jac X$ determines a canonical isomorphism $\pi_{1}(\Jac^0 X_{\kbar},e) \to \pi_1(\Jac X_{\kbar},e)$ between the fundamental group of $\Jac^0 X_{\kbar}$ based at the identity and the fundamental group of $\Jac X_{\kbar}$ based at the identity. The  multiplication-by-$\ell$ map $\Jac^0 X \to \Jac^0 X$, defined $I \mapsto I^{\otimes \ell}$, is a Galois covering space map with Galois group equal to the group of $\ell$-torsion, inducing a homomorphism 
\begin{equation} \label{Eqn: PiToTorsion}
	\pi_{1}( \Jac X_{\kbar},e) \to \Jac X [\ell] (\kbar)
\end{equation}
from the fundamental group of $\Jac X_{\kbar}$ to $\Jac X [\ell] (\kbar)$.  The Serre--Lang Theorem implies that \eqref{Eqn: PiToTorsion} becomes an isomorphism after tensoring $\pi_{1}( \Jac X_{\kbar},e)$ with $\Z/\ell$.  Since the fundamental group is abelian, for instance by Serre--Lang or alternatively by the Eckmann--Hilton argument, for any definition of homology satisfying the Hurewicz theorem, there is an isomorphism
\begin{equation} \label{Eqn: H_1ToTorsion}
	H_{1}(\Jac X_{\kbar}, \bbZ/\ell) \stackrel{\cong}{\to} \Jac X [\ell] (\kbar).
\end{equation}
Define $\can$ to be the composition of \eqref{Eqn: TorsionToCoh} with the inverse of \eqref{Eqn: H_1ToTorsion}. Equation~\eqref{Eqn: PDForProper} is then a consequence of \cite[Dualit\'e, Proposition~3.4]{sga4andhalf}.  

In other words, the canonical isomorphism $\sigma$ results from combining the moduli definition of $\Pic$ with its structure theory.

In this paper we extend the relation between the classical Abel map and Poincar\'e duality for a smooth proper curve to a relation between the Altman--Kleiman Abel map and Poincar\'e duality for a smooth nonproper curve, which we place in analogy with manifolds with boundary. Topological Poincar\'e duality works for manifolds with boundary, resulting in isomorphisms between the homology of the manifold modulo its boundary and the cohomology of the manifold. We will discuss the algebraic analogue below, in which to a smooth nonproper algebraic curve $X$ (satisfying a technical assumption), we associate a singular curve $\Xsing$ which should be thought of as $X$ modulo a sort of natural boundary. 

The Altman--Kleiman Abel map does not embed $\Xsing$ into its Picard scheme, but rather embeds $\Xsing$ into its compactified Picard scheme. Like the Picard scheme, the compactified Picard scheme has a moduli definition and structure theory, of course, but the structure of the compactified Picard scheme is more complicated than that of the Picard scheme, and is studied in this paper.  



\subsection{The main result} This paper computes the structure of the compactified Picard scheme of $\Xsing$, and uses this computation to show that Poincar\'e duality for a nonproper smooth curve $X$ is realized by the Abel map of $\Xsing$, extending the result discussed above. Embedded in this statement is the claim that the compactified Picard scheme admits a canonical isomorphism $\sigma: \rH^1(X_{\kbar}, \Z/\ell(1)) \stackrel{\cong}{\to}  \rH_{1}(\Jacbar \Xsing_{\kbar}, \Z/\ell)$. We develop a structure theory for $\Jacbar \Xsing$ which when combined with the moduli definition of $\Pic$ allows us to define $\sigma$.




It is natural to consider a smooth curve modulo its boundary in the context of Poincar\'e duality. The Poincar\'e duality perfect pairing for an oriented manifold $M$  with boundary $\partial M$ $$\rH^i (M/ \partial M, \Z/\ell) \otimes \rH^{\operatorname{dim} M - i} (M, \Z/\ell) \to \Z/\ell$$ follows from Michael Atiyah's formula for the dual of $M/ \partial M$ in terms of the tangent bundle of $M$ in the category of spectra \cite{Atiyah}. In this sense, $M/\partial M$ is dual to a shift of $M$ itself. The analogous duality in algebraic geometry, or rather $\proj^1$-spectra \cite{Hu}, produces perfect pairings of \'etale cohomology groups \cite{Isaksen}.  In particular, let $X$ be a smooth curve and assume that $X$ is an open subscheme of a smooth proper curve $\tilde{X}$ such that the residue fields of the points of $\tilde{X} - X$ are separable extensions of $k$. (For instance, when $k$ has characteristic $0$, this assumption is automatically satisfied.) Form $X/ \partial X := \Xsing$, where $\Xsing$ is defined by the pushout diagram \cite[Theorem~5.4]{ferrand03} $$\xymatrix{ \partial X \ar[r] \ar[d] &  \tilde{X} \ar[d] \\ \Spec k \ar[r] & \Xsing},$$ and $\partial X$ denotes the complement $\tilde{X} -X$ with its reduced closed subscheme structure. (This pushout is discussed further in Section~\ref{Section: FoldSingularities}.) Let $R$ denote $\Z/\ell$ or $\Z_{\ell}$, with $\ell$ relatively prime to $\operatorname{char} k$. By \cite[VI, Section~11, Corollary~11.2]{Milnebook} and the canonical isomorphism $\rH^1_c(X_{\kbar}, R(r)) \cong \rH^1( \Xsing_{\kbar}, R(r))$\hidden{

\begin{pr}
	There is a canonical isomorphism $\rH^1_c(X_{\kbar}, R(r)) \cong \rH^1(X/(\partial X)_{\kbar}, R(r))$.
\end{pr}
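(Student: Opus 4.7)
The plan is to realize $\Xsing$ as a proper compactification of $X$ whose ``point at infinity'' admits a canonical retraction. The Ferrand pushout furnishes a closed immersion $i_0 : \Spec k \hookrightarrow \Xsing$ whose open complement is canonically identified with $\tilde{X} \setminus \partial X = X$, giving an open immersion $j : X \hookrightarrow \Xsing$. The structure morphism $\pi : \Xsing \to \Spec k$ is a retraction of $i_0$, since $\pi \circ i_0 = \mathrm{id}_{\Spec k}$ is forced by the universal property of the pushout applied to the trivial cocone. Finally, $\Xsing \to \Spec k$ is proper: the canonical surjection $\tilde{X} \to \Xsing$ is proper and $\tilde{X} \to \Spec k$ is proper, so universal closedness of $\Xsing \to \Spec k$ descends along $\tilde{X} \to \Xsing$, while separatedness and finite type come from the explicit description in \cite[Theorem~5.4]{ferrand03}.

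Granting these geometric facts, the proof is short. Since $\Xsing$ is proper, $\rH^i_c(X_{\kbar}, R(r)) = \rH^i(\Xsing_{\kbar}, j_! R(r))$. The short exact sequence of \'etale sheaves
\begin{equation*}
0 \to j_! R(r) \to R(r) \to i_{0,*} R(r) \to 0
\end{equation*}
on $\Xsing_{\kbar}$ yields a long exact sequence
\begin{equation*}
\cdots \to \rH^i_c(X_{\kbar}, R(r)) \to \rH^i(\Xsing_{\kbar}, R(r)) \to \rH^i(\Spec \kbar, R(r)) \to \cdots
\end{equation*}
in which the second arrow is split surjective by $\pi^*$. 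The long exact sequence therefore breaks into split short exact sequences. Since $\rH^i(\Spec \kbar, R(r)) = 0$ for $i \geq 1$, one obtains canonical isomorphisms $\rH^i_c(X_{\kbar}, R(r)) \cong \rH^i(\Xsing_{\kbar}, R(r))$ for every $i \geq 1$; setting $i = 1$ proves the proposition, and the isomorphism is canonical because the map arises from the adjunction unit $j_! R(r) \to R(r)$.

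The only potentially delicate step is confirming the geometric properties of the Ferrand pushout used above, namely that $i_0$ is a closed immersion with open complement $X$ and that $\Xsing$ is proper over $k$. These are standard consequences of Ferrand's construction together with descent of properness along a proper surjection, so the bulk of the work is simply a careful citation; the cohomological content of the proposition then reduces to the splitting provided by $\pi$.
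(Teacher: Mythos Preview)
Your proof is correct and follows essentially the same route as the paper's: both use the open/closed decomposition $X \hookrightarrow \Xsing \hookleftarrow \Spec k$, the short exact sequence $0 \to j_! R(r) \to R(r) \to i_{0,*} R(r) \to 0$, and the resulting long exact sequence in cohomology. The paper argues injectivity and surjectivity at degree~$1$ separately (surjectivity of $\rH^0(\Xsing) \to \rH^0(\Spec \kbar)$ and vanishing of $\rH^1(\Spec \kbar)$), whereas your retraction $\pi$ packages both facts into a single splitting and yields the isomorphism for all $i \ge 1$ at once; this is a mild streamlining but not a different idea.
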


\begin{proof}
We may assume that $k = \kbar$, because the formation of $X/(\partial X)$ commutes with arbitrary field extension \cite[Lemma~4.4]{ferrand03}. Let $\Xsing = X/(\partial X)$, and $j: X \to \Xsing$ be the open immersion. By \cite[III, Section~1, p.~93]{Milnebook}, there is a canonical isomorphism $\rH^1_c(X, R(r)) \cong \rH^1(\Xsing, j_! R(r))$.  Let $i: \Spec k \to \Xsing$ be a closed immersion whose set-theoretic image is the singular point. Note that $i^* R(r) = R(r)$ and $j^* R(r)$, giving an exact sequence $$0 \to j_! R(r) \to R(r) \to i_* R(r) \to 0$$ by \cite[II, Section~3, p.~76]{Milnebook}. Applying $\rH^*(\Xsing, -)$ to this exact sequence yields $$\to \rH^0(\Xsing, R(r) ) \to \rH^0(\Xsing, i_*R(r) ) \to \rH^1_c(X, R(r))  \to \rH^1(\Xsing, R(r) ) \to \rH^1(\Xsing, i_* R(r)) \to .$$   Since $\rH^0(\Xsing, i_*R(r) ) \cong \rH^0(\Spec k, R(r)) \cong R(r)$, the map $\rH^0(\Xsing, R(r) ) \to \rH^0(\Xsing, i_*R(r) )$ is surjective, whence $ \rH^1_c(X, R(r))  \to \rH^1(\Xsing, R(r) ) $ is injective. Since $$\rH^1(\Xsing, i_*R(r) ) \cong \rH^1(\Spec k, R(r)) \cong 0,$$ we have that $ \rH^1_c(X, R(r))  \to \rH^1(\Xsing, R(r) ) $ is surjective.
\end{proof}

}, there is a perfect pairing of \'etale cohomology groups $$\rH^1( \Xsing_{\kbar}, R(r) ) \otimes \rH^1(X_{\kbar}, R(1-r)) \to R.$$ 

For simplicity, define $\rH_1( (-)_{\kbar} , R)$ for geometrically connected $k$-schemes using the abelianization of the $\ell$-\'etale fundamental group by $\rH_1( (-)_{\kbar} , R) := \pi_1^{\ell}(-)_{\kbar}^{\ab} \otimes_{\Z_{\ell}} R$, but see \cite[Section~7]{Friedlander} for a discussion of \'etale homology in more generality. The tautological pairing between $\rH_1$ and $\rH^1$\hidden{in terms of the \'etale fundamental group and torsors, this is the tautological action on an automorphism of the fiber functor on the fiber} produces the Poincar\'e duality isomorphisms (see Section~\ref{sectionAJPD}):

\begin{equation} \label{PoincarŽforXtoPicX+} 
	\text{Poincar\'{e} Duality}:  \rH_1(X_{\kbar}, R) \to \rH^1( \Xsing_{\kbar}, R(1)).
\end{equation}
\begin{equation}\label{PoincarŽforX+toJbar} 
	\text{Poincar\'{e} Duality}:  \rH_1( \Xsing_{\kbar}, R) \to \rH^1(X_{\kbar}, R(1))
\end{equation} 

It is not hard to modify the argument given in the proper case to show that an Abel map realizes the isomorphism \eqref{PoincarŽforXtoPicX+}.  More explicitly, the moduli space $\Pic \Xsing$ of invertible sheaves on $\Xsing$ exists, and the rule $x \mapsto \calO_{\Xsing}(x)$ defines an Abel map $$\Abel \colon X \to \Pic \Xsing$$ whose domain is the nonproper curve $X$.  The connected component of $\Pic \Xsing$ containing the trivial line bundle is a semi-abelian variety, and a generalization of the Serre--Lang Theorem \cite [Appendix, p.~66~(III)]{Mochizuki} computes the fundamental group of $\Pic \Xsing$ in terms of torsion points.  Using the fact that these torsion points are line bundles, a canonical isomorphism $\can$ can be constructed as before, and this isomorphism satisfies \eqref{Eqn: PDForProper}.

What about the isomorphism \eqref{PoincarŽforX+toJbar}?  The rule $x \mapsto \calO_{X}(x)$ does not define an Abel map from $\Xsing$ to $ \Pic \Xsing$ because $\calO_{X}(x)$ is undefined when $x$ is a singularity.  The generalized Picard scheme $\Pic \Xsing$ is not proper, but it is contained in the proper scheme $\operatorname{\overline{M}od} \Xsing$ defined as the moduli space of rank $1$, torsion-free sheaves on $\Xsing$.  The ideal sheaf $I_{x} = \calO_{\Xsing}(-x)$ of a point on $\Xsing$ is a rank $1$, torsion-free sheaf, and so the rule $x \mapsto I_{x}$ defines a map $\Xsing \to \operatorname{\overline{M}od} \Xsing$.  The scheme $\operatorname{\overline{M}od} \Xsing$ has the undesirable property that the connected components are reducible, so rather than work with $\operatorname{\overline{M}od} \Xsing$, we work with the Zariski closure of the line bundle locus which we call the compactified Picard scheme $\Jacbar \Xsing$.  The rule $x \mapsto I_{x}$ defines a morphism $\Abel \colon \Xsing \to \Jacbar \Xsing$, that will be called the Abel map of $\Xsing$.

The main theorem of this paper is that $\Abel$ realizes the Poincar\'{e} duality isomorphism \eqref{PoincarŽforX+toJbar}.  The theorem applies under the following hypothesis:
\begin{hyp} \label{Hypothesis}
	The curve $X$ has the property that
	\begin{enumerate}
		\item the normal proper model $\widetilde{X}$ of $X$ is smooth over $k$;
		\item the extension of residue fields $k(x)/k$ is separable for every point $$x \in \partial X := \widetilde{X} - X.$$
	\end{enumerate}
\end{hyp}

The theorem states:

\begin{tm}\label{Thm: MainIntro}
If $X$ is a smooth curve over $k$ that satisfies Hypothesis~\ref{Hypothesis}, then 
	\begin{equation} \label{Eqn: PDForNonProper}
		-\text{Poincar\'{e} Duality} = 	\can^{-1} \circ \rH_{1}(\text{Abel Map})
	\end{equation}
	for an explicit isomorphism of $\Gal(\kbar/k)$-modules $\can: \rH^{1}(X_{\kbar}, R(1)) \to \rH_{1}(\Jacbar \Xsing_{\kbar}, R)$.
\end{tm}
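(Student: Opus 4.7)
The strategy is to compare the Abel map of $\Xsing$ with the classical Abel map of the smooth proper normalization $\widetilde{X}$ and reduce \eqref{Eqn: PDForNonProper} to \eqref{Eqn: PDForProper}, using the pushout presentation of $\Xsing$ together with a matching exact sequence on the Picard side. First, I would develop the structure theory of $\Jacbar \Xsing$. The normalization $\nu \colon \widetilde{X} \to \Xsing$ is a universal homeomorphism away from the ordinary fold singularity $p$, over which the fiber is $\partial X$. Pullback of rank one torsion-free sheaves along $\nu$ yields a morphism $\nu^* \colon \Jacbar \Xsing \to \Jac \widetilde{X}$ whose restriction to identity components fits into an exact sequence with kernel an explicit affine group built from $\partial X$. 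Building a presentation scheme mapping by universal homeomorphism onto $\Jacbar^0 \Xsing$, as promised in the abstract, gives effective computational access to $\pi_1^{\ell}(\Jacbar^0 \Xsing_{\kbar})$.

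Second, I would construct $\can$ in stages. The announced isomorphism $\pi_1^{\ell} \Jac^0(-) \cong \rH^1(-, \Z_{\ell}(1))$ applied to $\Xsing$ identifies $\pi_1^{\ell}(\Jac^0 \Xsing_{\kbar})$ with $\rH^1(\Xsing_{\kbar}, \Z_{\ell}(1))$, and the canonical isomorphism $\rH^1(\Xsing_{\kbar}, \Z_\ell(1)) \cong \rH^1_c(X_{\kbar}, \Z_\ell(1))$ noted in the introduction links the target to $X$. Separately, the open inclusion $\Jac^0 \Xsing \hookrightarrow \Jacbar^0 \Xsing$, whose complement parametrizes non locally free sheaves, should induce an isomorphism on $\pi_1^{\ell}$; I would verify this using the presentation scheme, so that $\pi_1^{\ell}(\Jacbar^0 \Xsing_{\kbar})$ also computes $\rH^1(X_{\kbar}, \Z_{\ell}(1))$. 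The abelian identification $\rH_1 \cong \pi_1^{\ab}$ and tensoring with $R$ then produces $\can$, and the structure theory of step one shows this does not depend on the choice of connected component.

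Third, I would verify \eqref{Eqn: PDForNonProper} by naturality. Pulling back along $\nu$, the composition $\widetilde{X} \to \Xsing \stackrel{\Abel}{\to} \Jacbar \Xsing \stackrel{\nu^*}{\to} \Jac \widetilde{X}$ agrees, after a fixed twist, with the classical Abel map of $\widetilde{X}$ restricted to the smooth locus. The Mayer--Vietoris sequence for the pushout $\Xsing = \widetilde{X} \cup_{\partial X} \Spec k$ on the source and the exact sequence from step one on the target assemble into a compatible ladder, so that \eqref{Eqn: PDForNonProper} for $\Xsing$ reduces to \eqref{Eqn: PDForProper} for $\widetilde{X}$ together with a matching computation on the new piece of $\rH_1(\Xsing_{\kbar})$ arising from loops around $p$. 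This last step is where I expect the main obstacle: I would need to show that $\rH_1(\Abel)$ sends a small loop around $p$ to the class in $\rH_1(\Jacbar \Xsing_{\kbar}, R)$ that corresponds, under $\can$, to the Poincar\'e dual of the class of $\partial X$ in $\rH^1(X_{\kbar}, R(1))$, and to pin down the sign. The overall minus sign in \eqref{Eqn: PDForNonProper} should ultimately trace back to the convention $\Abel(x) = I_x = \cO_{\Xsing}(-x)$, opposite to the classical convention $x \mapsto \cO_{\widetilde{X}}(x)$ that produces $+$Poincar\'e duality in \eqref{Eqn: PDForProper}.
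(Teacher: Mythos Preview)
Your construction of $\can$ contains a genuine gap that breaks the argument. You propose to build $\can$ by composing the isomorphism $\pi_1^{\ell}(\Jac^0 \Xsing_{\kbar}) \cong \rH^1(\Xsing_{\kbar}, \Z_\ell(1)) \cong \rH^1_c(X_{\kbar}, \Z_\ell(1))$ with an isomorphism coming from the open inclusion $\Jac^0 \Xsing \hookrightarrow \Jacbar^0 \Xsing$. There are two problems. First, this lands in $\rH^1_c(X_{\kbar}, R(1))$, not $\rH^1(X_{\kbar}, R(1))$; you silently switch between these, but identifying them canonically is essentially Poincar\'e duality for $X$, which is what you are trying to realize. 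Second, and more fatally, the open inclusion $\Jac^0 \Xsing \hookrightarrow \Jacbar^0 \Xsing$ does \emph{not} induce an isomorphism on $\pi_1^{\ell}$. Take $\widetilde{X} = \bbP^1$ with $\partial X$ two points, so $\Xsing$ is a nodal cubic. Then $\Jac^0 \Xsing \cong \G_m$, while the presentation scheme description shows $\Jacbar^{-1} \Xsing$ is itself a nodal rational curve. The degree-$\ell$ connected \'etale cover of the nodal curve is a cycle of $\ell$ copies of $\bbP^1$; restricted to the smooth locus $\G_m$ it becomes $\ell$ disjoint copies of $\G_m$, i.e.\ the trivial cover. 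Thus the induced map $\pi_1^{\ell}(\G_m) \to \pi_1^{\ell}(\Jacbar^{-1} \Xsing)$ is zero, not an isomorphism.

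The paper's $\can$ is built by an essentially different mechanism. One does not pass through $\Jac^0 \Xsing$ at all; instead one uses the fundamental \emph{groupoid} of $\Jac^{-1} \widetilde{X}_{\kbar}$ based at the finite set $\Abel_*(\partial X(\kbar))$, and proves (Theorem~\ref{Prop_groupoid_to_H1}) that the free $R$-module on this groupoid is canonically $\rH^1(X_{\kbar}, R(1))$. The extra classes in $\rH^1(X_{\kbar}, R(1))$ beyond $\rH^1(\widetilde{X}_{\kbar}, R(1))$ come from \emph{paths between distinct basepoints} in $\Jac^{-1}\widetilde{X}$, interpreted via $\ell^n$-th roots of $\calO(\widetilde{x}_t - \widetilde{x}_s)$ restricted to $X$. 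The presentation scheme, as a $\bbP^{b(x_0)-1}$-bundle over $\Jac^{-1}\widetilde{X}$ with $b(x_0)$ sections all collapsing in $\Jacbar^{-1}\Xsing$, is exactly the device that converts such paths into honest loops in $\Jacbar^{-1}\Xsing$. Your broad instinct to compare with the smooth proper case via Mayer--Vietoris is sound---the paper does organize both $\rH_1(\Xsing_{\kbar}, R)$ and $\rH_1(\Jacbar^{-1}\Xsing_{\kbar}, R)$ this way---but the identification with $\rH^1(X_{\kbar}, R(1))$ and the final verification go through the groupoid picture together with the K\"unneth components of the class of the diagonal and the analogue of \cite[Dualit\'e, Propositions~3.2, 3.4]{sga4andhalf}, not through $\Pic^0 \Xsing$.
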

This result is Theorem~\ref{minus_wp_is_sigma_inverse_H1_Abel} below. Observe that in Equation~\eqref{Eqn: PDForNonProper}, Poincar\'{e} duality appears with a minus sign.  We discuss the significance of this sign and Hypothesis~\eqref{Hypothesis} later in this introduction.

We prove Theorem~\ref{Thm: MainIntro} using a strategy similar to the one used to prove Equation~\eqref{Eqn: PDForProper}. The isomorphism $\can$ is constructed using a description of the structure of $\Jacbar \Xsing$, and the definition of $\Pic \widetilde{X}$ as a moduli space. We then prove Equation~\eqref{Eqn: PDForNonProper} using analogues of results of \cite[Dualit\'e, Section~3]{sga4andhalf}. We also obtain a generalization of \cite[Dualit\'e, Propositions~3.2 and 3.4]{sga4andhalf} in the case of a proper smooth curve $\widetilde{X}$. Together, Propositions 3.2 and 3.4 imply that the Poincar\'e dual of a loop on $\widetilde{X}$ can be described as the associated monodromy on the pullback of the multiplication-by-$\ell$ map $\Pic \to \Pic$ under a translation of the Abel map. We give the monodromy of a path in terms of Poincar\'e duality on a quotient curve. The different fibers have a canonical identification after tensoring with the cohomology of the quotient curve, allowing us to speak of monodromy as an element of this cohomology. See Section~\ref{sectionAJPD}.

\subsection{New results about $\Jacbar \Xsing$}
In proving Theorem~\ref{Thm: MainIntro}, we prove new results about the compactified Picard scheme that are of independent interest. Unlike the Picard scheme of a smooth proper curve, the compactified Picard scheme does not have a group structure, so we cannot use e.g.,~the Serre--Lang Theorem, and instead compute the structure of $\Jacbar \Xsing$ up to universal homeomorphism.  Prior to this paper, very little was known about $\Jacbar \Xsing$ when $\partial X$ contains at least three points.  In this case, the singularity of $\Xsing$ is non-Gorenstein and non-planar.  While there is a large body of work describing the structure of the compactified Picard scheme of a curve with planar singularities, the only results that apply to $\Xsing$ that the authors are aware of are Altman--Kleiman's result that $\Jacbar \Xsing$ is a projective scheme \cite{altman80} and Kleiman--Kleppe's result that the moduli space $\operatorname{\overline{M}od}^{d} \Xsing$ of degree $d$ rank $1$, torsion-free sheaves is reducible \cite{Kleppe81}.  

To construct $\can$ in Theorem~\ref{Thm: MainIntro}, we describe the structure of $\Jacbar \Xsing$ as follows. Let $f: \widetilde{X} \to \Xsing$ denote the map from the pushout definition of $\Xsing$, which is also the normalization map. Let $x_0$ denote the singular point of $\Xsing$. There is a projective bundle $\Pres \Xsing$ over the Picard scheme $\Jac^{-1} \widetilde{X}$ representing a certain presentation functor, and a birational morphism $\Pres \Xsing \to \Jacbar^{-1} \Xsing$ from the bundle to the compactified Picard scheme.  The bundle map $\Pres \Xsing \to \Jac^{-1} \widetilde{X}$ admits sections labeled by the points of $f^{-1}(x_0)$, and the birational morphism $\Pres \Xsing \to \Jacbar^{-1} \Xsing$ is the map that, up to universal homeomorphism, identifies these sections in the sense that a natural pushout diagram, Diagram~\eqref{defineJnat}, defines a universal homeomorphism.  This is Theorem~\ref{Proposition: UnivesalHomeo}.  

The geometric description of $\Jacbar^{-1} \Xsing$ given by Theorem~\ref{Proposition: UnivesalHomeo} is of independent interest.  For example, the theorem (or more precisely Proposition~\ref{Prop: ClassificationOfJacbar} which is used to prove the theorem) shows that a rank $1$, torsion-free sheaf  lies in $\Jacbar^{-1} \Xsing$, rather than in a different component of the reducible scheme $\operatorname{\overline{M}od}^{-1} \Xsing$, precisely when the sheaf is the direct image of a line bundle under a partial normalization map $\overline{Y} \to \Xsing$ out of a curve $\overline{Y}$ with at most one singularity.  In particular, we have:
\begin{cor} 
	Assume $\operatorname{char}(k)>3$.  Define $\Xsing$ to be the rational curve with a $4$-fold point that is obtained from $\widetilde{X} := \bbP^{1}$ by identifying the points $\partial X = \{ \pm1, \pm2 \}$.  Define $\overline{Y}$ to be the rational curve with two nodes that is obtained from $\widetilde{X}$ by identifying $1$ with $-1$ and $2$ with $-2$.  If $g \colon \overline{Y} \to \Xsing$ is the natural morphism, then
	$$
		I = g_{*} \calO_{\overline{Y}}
	$$
	is not the limit of line bundles.
\end{cor}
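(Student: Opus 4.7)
The plan is to compute the sheaf endomorphism ring $\sHom_{\cO_{\Xsing}}(I, I)$ and show that it equals $g_{*} \cO_{\overline{Y}}$, then deduce that any presentation of $I$ as a direct image of a line bundle along a partial normalization $h \colon \overline{Z} \to \Xsing$ forces $\overline{Z} \cong \overline{Y}$, contradicting the at-most-one-singularity condition of Proposition~\ref{Prop: ClassificationOfJacbar}. To set this up, I would work in the completion at the unique singular point $x_0 \in \Xsing$. Writing $\widetilde{R} = \prod_{i \in \{\pm 1, \pm 2\}} k[[t_i]]$ for the completed normalization at the four preimages of $x_0$, the completed local ring $R := \widehat{\cO}_{\Xsing, x_0}$ is the subring of tuples with $f_1(0) = f_{-1}(0) = f_2(0) = f_{-2}(0)$, while the completed stalk $S := \widehat{I}_{x_0}$ is the subring of tuples satisfying only $f_1(0) = f_{-1}(0)$ and $f_2(0) = f_{-2}(0)$.

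Next, I would compute $\sHom_{\cO_{\Xsing}}(I, I)$ by noting that for a rank-one torsion-free sheaf on a reduced curve, tensoring with the total quotient ring $K$ identifies every $\cO_{\Xsing}$-linear endomorphism with multiplication by some $\alpha \in K$. Testing such a multiplication on $1 \in S$ forces $\alpha \in S$, and conversely every element of $S$ induces an endomorphism because $S$ is a subring of $K$. Hence $\sHom_{\cO_{\Xsing}}(I, I) = g_{*} \cO_{\overline{Y}}$ as subsheaves of rational functions.

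Now I would argue by contradiction: if $I$ were a limit of line bundles, then $I$ would lie in a degree component of $\Jacbar \Xsing$, and Proposition~\ref{Prop: ClassificationOfJacbar}, in the appropriate degree after twisting, would produce a partial normalization $h \colon \overline{Z} \to \Xsing$ with at most one singularity and a line bundle $\LL$ on $\overline{Z}$ satisfying $I \cong h_{*} \LL$. The same local computation applied to $h_{*} \LL$, which is locally isomorphic to $h_{*} \cO_{\overline{Z}}$ since $\LL$ is locally trivial, gives $\sHom_{\cO_{\Xsing}}(h_{*} \LL, h_{*} \LL) = h_{*} \cO_{\overline{Z}}$. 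Comparing yields $h_{*} \cO_{\overline{Z}} = g_{*} \cO_{\overline{Y}}$ as subsheaves of rational functions, forcing $\overline{Z} \cong \overline{Y}$ via the correspondence between partial normalizations of $\Xsing$ and subsheaves of rings sitting between $\cO_{\Xsing}$ and $f_{*} \cO_{\widetilde{X}}$. Since $\overline{Y}$ has two nodes, this contradicts the at-most-one-singularity condition.

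The main obstacle I expect is the endomorphism ring computation, specifically the step identifying an arbitrary $\cO_{\Xsing}$-linear endomorphism of $I$ with multiplication by an element of the total quotient ring. Once the endomorphism ring is computed, the rest is essentially a uniqueness statement for the partial-normalization presentation.
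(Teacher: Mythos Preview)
Your proposal is correct and follows the same route as the paper: both invoke Proposition~\ref{Prop: ClassificationOfJacbar} to say that any sheaf in $\Jacbar \Xsing$ must be $h_{*}N$ for a partial normalization $\overline{Z}$ with at most one singularity, and then conclude that $g_{*}\calO_{\overline{Y}}$ is not of this form. The paper's proof is a single sentence citing that proposition and leaves implicit the step that the partial normalization in such a representation is intrinsic to the sheaf; you make this step explicit via the endomorphism-sheaf computation $\sHom_{\calO_{\Xsing}}(I,I)=g_{*}\calO_{\overline{Y}}$, which is indeed the standard way to recover $\overline{Y}$ from $I$ (and the point one would have to supply if pressed). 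Your local computation and the passage from $\sHom(I,I)$ to $\overline{Z}\cong\overline{Y}$ via relative $\Spec$ are fine; the only cosmetic remark is that the equality $h_{*}\calO_{\overline{Z}}=g_{*}\calO_{\overline{Y}}$ is most cleanly phrased as an isomorphism of $\calO_{\Xsing}$-algebras (both canonically identified with $\sHom(I,I)$), rather than literally as subsheaves of rational functions, since the embedding into $K$ depends on a choice of generic trivialization of $I$.
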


Note that when $\partial X$ consists of two points, the singularity of $\Xsing$ is a node. The projective bundle $\Pres \Xsing$ that appears in Theorem~\ref{Proposition: UnivesalHomeo} is a generalization of the presentation scheme of a nodal curve constructed by Oda--Seshadri \cite{oda} and Altman--Kleiman \cite{altman90}, and as such, is also called the presentation scheme.  The presentation scheme of a nodal curve was also studied by Bhosle in \cite{bhosle}, where the scheme appears as an example of a moduli space of generalized parabolic bundles.    

Let us use what has been said about the structure of $\Jacbar^{-1} \Xsing$ to define $\can$. Assume for simplicity that $k = \kbar$, and that $\widetilde{X}$ has genus greater than $0$. As above, we have the presentation scheme $\Pres \Xsing$ and maps $$\xymatrix{ & \ar[dl]_{\PtoPic} \Pres \Xsing \ar[dr]^{\PtoJbar}&  \\  \Jac^{-1} \widetilde{X} && \Jacbar^{-1} \Xsing,}$$ with $\PtoPic$ a projective bundle, equipped with sections in bijection with the points of $\partial X$, or equivalently in bijection with the points of $f^{-1}(x_0)$, and $\PtoJbar$ a quotient map identifying the images of the sections in a certain manner. As $\widetilde{x}$ varies over all points of $\partial X$, apply the corresponding section to $\Abel ( \widetilde{x})$ to produce a set $\mathcal{E}$ of points of $\Pres \Xsing$. In the description of $\PtoJbar$, we will see that the points of $\mathcal{E}$ all have the same image under $\PtoJbar$, inducing a map from the \'etale fundamental groupoid $\pi_1 ( \Pres \Xsing, \mathcal{E})$ to the \'etale fundamental group of $\Jacbar^{-1} \Xsing$. Composing with the Hurewicz map, which is tautological with our definition of $\rH_1$, yields 
\begin{equation} \label{Eqn: Presgroupoid_toH1}
	\pi_1^{\ell} ( \Pres \Xsing_{\kbar}, \mathcal{E}) \to \rH_1(\Jacbar^{-1} \Xsing_{\kbar}, R).
\end{equation}

The projective bundle $\PtoPic$ identifies this fundamental groupoid with the fundamental groupoid of $\Pic^{-1} \widetilde{X}$ based at the images of the points of $\partial X$ under the Abel map $${\PtoPic}_*: \pi_1^{\ell} ( \Pres \Xsing, \mathcal{E}) \stackrel{\cong}{\to} \pi_1^{\ell}(\Pic^{-1} \widetilde{X}, \Abel_* (\partial X_{\kbar})).$$

In Theorem~\ref{Prop_groupoid_to_H1},  we show that the moduli definition of $\Pic$ yields a canonical isomorphism $$ \mathcal{F}_R\pi_1^{\ell}(\Pic^{-1} \tilde{X}, \Abel_* (\partial X_{\kbar})) \to \rH^1(X_{\kbar}, R(1))$$ from the free $R$-module $\mathcal{F}_R\pi_1^{\ell}(\Pic^{-1} \tilde{X}_{\kbar}, \Abel_* (\partial X_{\kbar}))$ on the fundamental groupoid $$\pi_1^{\ell}(\Pic^{-1} \tilde{X}_{\kbar}, \Abel_* (\partial X_{\kbar}))$$ to the \'etale cohomology group $\rH^1(X_{\kbar}, R(1))$. 

The map \eqref{Eqn: Presgroupoid_toH1} and universal property of $\mathcal{F}_R$ then give $$\sigma: \rH^1(X_{\kbar}, R(1)) \to \rH_1(\Jacbar \Xsing_{\kbar}, R),$$ which is shown to be an isomorphism in  Proposition~\ref{Prop_H1XR(1)toH1jbar_iso}. This is the $\can$ which appears in Theorem~\ref{Thm: MainIntro}.

\subsection{Connections with autoduality}
Theorem~\ref{Thm: MainIntro} is related to the theory of autoduality of the compactified Picard scheme.  Under various hypotheses on $\overline{X}$, Arinkin, Esteves--Gagn{\'e}--Kleiman, Esteves--Rocha, Melo--Rapagnetta--Viviani have proved that $\Pic (\Abel)$ is an isomorphism between the Picard schemes $\Pic^{0} \overline{X}$ and $\Pic^{0} \Jacbar \overline{X}$ \cite{Arinkin, esteves2002, Esteves_Flavio, MRV}.  How are those results related to the results of this paper?  

One consequence of Theorem~\ref{Thm: MainIntro} is
\begin{cor} \label{Cor: Intro}
	Applying either the functor $\rH_1((-)_{\kbar}, R)$ or $\rH^1((-)_{\kbar}, R)$ to $$\Abel: X^+ \to \Jacbar \Xsing$$ produces an isomorphism.
\end{cor}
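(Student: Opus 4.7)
The $\rH_1$ half of the corollary is an immediate reading of Theorem~\ref{Thm: MainIntro}, which rearranges to
\[
\rH_1(\Abel) \;=\; -\can \circ \bigl(\text{Poincar\'{e} Duality}\bigr).
\]
The canonical map $\can$ is an isomorphism by Proposition~\ref{Prop_H1XR(1)toH1jbar_iso}, and the Poincar\'e duality map $\rH_1(\Xsing_{\kbar}, R)\to\rH^1(X_{\kbar}, R(1))$ is an isomorphism because it is induced by the perfect pairing of \'etale cohomology groups recalled in the introduction. So $\rH_1(\Abel)$ is the composition of two isomorphisms, and no further argument is needed for this half.

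The plan for the $\rH^1$ half is to identify $\rH^1((-)_{\kbar}, R)$ with the $R$-linear dual of $\rH_1((-)_{\kbar}, R)$ and then to invoke the $\rH_1$ case just established. First I would record the natural isomorphism
\[
\rH^1(Y_{\kbar}, R) \;\cong\; \Hom_{\text{cts}}\bigl(\pi_1^{\ell}(Y_{\kbar})^{\ab},\, R\bigr),
\]
valid for any geometrically connected $k$-scheme $Y$ and either choice of $R$; this follows from the classification of $\Z/\ell^n$-torsors on the \'etale site of $Y_{\kbar}$, passed to the inverse limit over $n$ when $R = \Z_{\ell}$. Combining this with the paper's definition $\rH_1(Y_{\kbar}, R) := \pi_1^{\ell}(Y_{\kbar})^{\ab}\otimes_{\Z_{\ell}} R$ and the tensor-Hom adjunction then produces a natural isomorphism
\[
\rH^1(Y_{\kbar}, R) \;\cong\; \Hom_{R}\bigl(\rH_1(Y_{\kbar}, R),\, R\bigr).
\]

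Since the contravariant functor $\Hom_R(-, R)$ carries any isomorphism of $R$-modules to an isomorphism, applying it to the isomorphism $\rH_1(\Abel)$ from the first paragraph yields an isomorphism which, under the identification above, coincides with $\rH^1(\Abel)$. The only item requiring any care is checking that the identification $\rH^1(Y_{\kbar}, R)\cong \Hom_R(\rH_1(Y_{\kbar}, R), R)$ is natural in $Y$, which should follow from the functoriality of the classifying bijection for \'etale torsors; no finiteness hypothesis on $\pi_1^{\ell,\ab}$ is required, and consequently there is no real obstacle in the argument.
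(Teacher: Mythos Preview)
Your proposal is correct and follows essentially the same approach as the paper. The paper deduces the $\rH_1$ case directly from Theorem~\ref{Thm: MainIntro} and handles the $\rH^1$ case by invoking the tautological pairing $\rH^1(Y_{\kbar},R)\cong\Hom(\rH_1(Y_{\kbar},R),R)$ (equation~\eqref{H1YR=HomH_1R} in Appendix~\ref{appendix_MV}, with the continuity caveat you noted), exactly as you do.
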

For cohomology, the result is established using a tautological pairing between $$\rH_1((-)_{\kbar}, R)\text{ and }\rH^1((-)_{\kbar}, R),$$ discussed in the second paragraph of Section~\ref{sectionAJPD} and Appendix~\ref{appendix_MV} \eqref{H1YR=HomH_1R}.  The pairing is induced from the monodromy pairing between torsors and $\pi_1$. 

Using the following proposition, results similar to Corollary~\ref{Cor: Intro} can be deduced from autoduality results because we have:
\begin{pr} \label{natl_iso_pi_1Pic=H1}
	Let $k$ be a perfect field. There is a natural isomorphism of functors from proper, geometrically connected schemes over $k$ to $\Gal(\kbar/k)$-modules $$\pi_1^{\ell} (\Pic^0 (-)_{\kbar}, e) \cong \rH^1((-)_{\kbar}, \Z_{\ell}(1)).$$
\end{pr}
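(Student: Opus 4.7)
The plan is to factor the claimed natural isomorphism through the $\ell$-adic Tate module $T_\ell \Pic^0(Y_{\kbar})$, via the Kummer sequence on one side and a generalization of Serre--Lang on the other.

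First, for any proper, geometrically connected $Y/k$, the Kummer sequence $1 \to \mu_{\ell^n} \to \G_m \to \G_m \to 1$ on $Y_{\kbar}$ produces an exact sequence
$$\kbar^{*} \stackrel{(-)^{\ell^n}}{\to} \kbar^{*} \to \rH^1(Y_{\kbar}, \mu_{\ell^n}) \to \Pic(Y_{\kbar})[\ell^n] \to 0,$$
in which the first map is surjective since $\kbar^{*}$ is $\ell$-divisible, yielding a natural, $\Gal(\kbar/k)$-equivariant isomorphism $\rH^1(Y_{\kbar}, \mu_{\ell^n}) \cong \Pic(Y_{\kbar})[\ell^n]$. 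Passing to the inverse limit over $n$ gives $\rH^1(Y_{\kbar}, \Z_\ell(1)) \cong T_\ell \Pic(Y_{\kbar})$. Because $Y$ is proper, the component group $\Pic(Y_{\kbar})/\Pic^0(Y_{\kbar})$ is finitely generated, hence its Tate module vanishes; left-exactness of $T_\ell$ then forces $T_\ell \Pic^0(Y_{\kbar}) \cong T_\ell \Pic(Y_{\kbar})$, and we conclude $\rH^1(Y_{\kbar}, \Z_\ell(1)) \cong T_\ell \Pic^0(Y_{\kbar})$.

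Second, I will identify the Tate module with $\pi_1^{\ell}$. Since $k$ is perfect, the reduced subscheme $(\Pic^0 Y)_{\red}$ is a smooth, connected, commutative group subscheme of $\Pic^0 Y$, and the closed immersion $(\Pic^0 Y)_{\red} \hookrightarrow \Pic^0 Y$ is a universal homeomorphism, hence induces an isomorphism on $\pi_1^{\ell}$ by topological invariance of the \'etale site. Over $\kbar$, the generalization of Serre--Lang cited in \cite[Appendix, p.~66~(III)]{Mochizuki} asserts that on a smooth connected commutative group scheme, multiplication by $\ell^n$ is a finite \'etale Galois cover with Galois group equal to the $\ell^n$-torsion subgroup. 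These covers are cofinal among pointed connected \'etale covers of $\ell$-power degree, so they yield a canonical, $\Gal(\kbar/k)$-equivariant isomorphism $\pi_1^{\ell}(\Pic^0 Y_{\kbar}, e) \cong T_\ell \Pic^0(Y_{\kbar})$. Composing the two identifications produces the desired isomorphism.

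The main subtlety will be functoriality in $Y$. For a morphism $f \colon Y_1 \to Y_2$, contravariance on the cohomology side is pullback of \'etale sheaves, while on the Picard side it comes from the group homomorphism $f^{*} \colon \Pic^0 Y_2 \to \Pic^0 Y_1$. One must check that the Kummer boundary is compatible with $f^{*}$ (which it is, since the Kummer sequence is pulled back from $\Spec k$) and that the Serre--Lang identification is compatible with pullback (which it is, since the covers $[\ell^n]$ are intrinsic to the commutative group structure preserved by $f^{*}$, with the identity element mapping to the identity). Galois equivariance at each stage follows from the fact that every ingredient is defined over $k$.
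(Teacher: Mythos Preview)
Your proof is correct and follows essentially the same approach as the paper's Appendix~A: both factor through the Tate module $T_\ell \Pic^0$ via the Kummer sequence on one side and a Serre--Lang-type identification on the other, and both eliminate the N\'eron--Severi contribution by invoking its finite generation. One small caveat: the Mochizuki reference you cite treats semi-abelian varieties, whereas for arbitrary proper $Y$ the group $(\Pic^0 Y)_{\red}$ is only a commutative algebraic group and may have a unipotent part; the paper handles this by citing Brion--Szamuely for the cofinality of the $[\ell^n]$-covers in that generality (alternatively, one notes that the unipotent part contributes nothing to $\pi_1^{\ell}$ or to $\ell$-power torsion, so the discrepancy is harmless).
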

We provide a proof of Proposition~\ref{natl_iso_pi_1Pic=H1} in Appendix~\ref{appendixa}.

The above proposition implies that Corollary~\ref{Cor: Intro} remains valid when $\Xsing$ is replaced by a curve $\overline{X}$ whose compactified Picard scheme satisfies autoduality.  Currently, autoduality results have only been proven under the hypothesis that $\overline{X}$ has plane curve singularities, and $\Xsing$ has plane curve singularities exactly when $x_0 \in \Xsing$ is a node.  For such an $\Xsing$, \cite[Theorem~2.1]{esteves2002} implies that $\Abel$ induces an isomorphism on homology and cohomology, or alternatively see \cite[Theorem~C]{Arinkin_line_bundles}.   

\subsection{Concluding remarks}
Let us conclude our discussion with two remarks about Theorem~\ref{Thm: MainIntro}.  First, in the theorem, we assume $X$ satisfies Hypotheses~\ref{Hypothesis}. This assumption allows us to assert that the curve $\Xsing_{\kbar}$ obtained by extending scalars to $\kbar$ is obtained from a smooth curve by a suitable pushout, and in particular, allows us to avoid curves that are normal but not geometrically normal (i.e.,~not smooth).  Second, the conclusion of Theorem~\ref{Thm: MainIntro} differs from Equation~\eqref{Eqn: PDForProper} by the minus sign of  Equation~\eqref{Eqn: PDForNonProper}, because the Abel map of Equation~\eqref{Eqn: PDForNonProper} is not the classical Abel map appearing in Equation~\eqref{Eqn: PDForProper}.  Classically, the Abel map is defined by $x \mapsto \calO_{X}(x)$, while the Abel map in \eqref{Eqn: PDForNonProper} is defined by  $x \mapsto I_{x}=\calO_{X}(-x)$.  On a smooth proper curve, the difference is a matter of notation as they differ by the automorphism $I \mapsto \Hom(I, \mathcal{O}_{X})$.  On such a singular curve  $\Xsing$, however, the difference is significant as only the second rule necessarily defines a regular map, as we show in Example~\ref{ex_Abeldual_can_fail_to_extend}.

This paper is organized as follows. In Section~\ref{Section: FoldSingularities} we record the definition of the one point compactification $\Xsing$ of a suitable smooth curve $X$ over $k$.  We study the  compactified Picard scheme $\Jacbar \Xsing$ of $\Xsing$ in the next two sections.  In Section~\ref{Section: PresentationScheme} we define the presentation scheme  and then use it in Theorem~\ref{Proposition: UnivesalHomeo} to compute the compactified Picard scheme up to universal homeomorphism.  We study the Abel map in Section~\ref{Section_Abel_map}, where we  prove that the classical Abel map can fail to extend to a morphism out of $\Xsing$ and that the Altman--Kleiman Abel map lifts to a morphism into the presentation scheme.  

In the last three sections, we describe the homology of $\Jacbar^{-1} \Xsing$ and apply that description to prove Theorem~\ref{Thm: MainIntro}.  In Section~\ref{Section:CohXfromPic} we prove that the cohomology of the smooth curve $X$ can be recovered from the fundamental groupoid of $\Jac^{-1} \widetilde{X}$.  We use this fact in Section~\ref{section_H1Jbar} to construct the isomorphism $\sigma$ appearing in Equation~\eqref{Eqn: PDForNonProper}.  We prove that Equation~\eqref{Eqn: PDForNonProper} holds in Section~\ref{sectionAJPD}.  There are two appendices. Appendix~\ref{appendixa} proves Proposition~\ref{natl_iso_pi_1Pic=H1}, identifying cohomology and the fundamental group of the Picard scheme. Appendix~\ref{appendix_MV} proves a Mayer--Vietoris sequence for pushouts by closed immersions and finite maps. This sequence is of cohomology groups, or homology groups in small dimensions. The homology sequence is used in Sections~\ref{section_H1Jbar} and \ref{sectionAJPD}.

{\bf Acknowledgements:} We wish to thank Dennis Gaitsgory and Carl Mautner for useful discussions. We thank Karl Schwede for explaining seminormality to us, and Anton Geraschenko for helpful information about coproducts of schemes.  We also thank Alastair King for informing us of the work of Bhosle, and Nicola Pagani, Emily Riehl, and Nicola Tarasca for helpful feedback concerning exposition.

During the writing of this paper, the first author was a  Wissenschaftlicher Mitarbeiter at the Institut f\"{u}r Algebraische Geometrie, Leibniz Universit\"{a}t Hannover.  The first author was supported by an AMS-Simons Travel Grant, and the second author is supported by an American Institute of Mathematics $5$-year fellowship.

\section*{Conventions}
$k$ is a field.

$\overline{k}$ is a fixed algebraic closure of $k$.

A \textbf{curve} $X/k$ is a separated, finite type, geometrically integral $k$-scheme of pure dimension $1$.

If $T$ is a $k$-scheme, then a \textbf{family of rank $1$, torsion-free} on a curve $X$ parameterized by $T$ is a $\calO_{T}$-flat finitely presented $\calO_{X_T}$-module $I$ with the property that the restriction to any fiber of $X_{T} \to T$ is rank $1$ and torsion-free.

The degree $d$ \textbf{compactified Picard scheme} $\Jacbar^{d}(\overline{X})$ of a proper curve $\overline{X}$ is the Zariski closure of the line bundle locus in the moduli space of rank $1$, torsion-free sheaves of degree $d$. 

A $T$-\textbf{relative effective Cartier divisor}  is a $T$-flat closed subscheme $D \subset X_{T}$ whose ideal $I_{D}$ is invertible.  

If $D$ is a $T$-relative effective Cartier divisor, then we write $\calO_{X_T}(D)$ for $\Hom(I_D, \mathcal{O}_{X})$.

$\pi_1^{\ell}$ denotes the maximal pro-$\ell$ quotient of the \'etale fundamental group. $\pi_1^{(p')}$ denotes the maximal prime-to-$p$ quotient of the \'etale fundamental group, where a profinite group is said to be prime-to-$p$ if it can be expressed as an inverse limit of finite groups whose orders are not divisible by $p$. If two geometric points $a$ and $b$ are included in the notation, as in $\pi_1(-,a,b)$, this $\pi_1(-,a,b)$ denotes the set of \'etale paths from $a$ to $b$ i.e.,~the natural transformations from the fiber functor associated to $a$ to the fiber functor associated to $b$. If $\mathcal{E}$ is a set of geometric points, $\pi_1(-, \mathcal{E})$ denotes the fundamental groupoid based at $\mathcal{E}$. 

$R$ denotes $\Z_{\ell}$ or $\Z/\ell^n$.

\section{Construction of a one point compactification} \label{Section: FoldSingularities}
Here we define the one point compactification $\Xsing$ of a nonproper smooth curve $X$ over $k$.  We describe the structure on the compactified Picard scheme of $\Xsing$ in Section~\ref{Section: PresentationScheme} below.  For the remainder of this section, we fix a smooth and proper curve $\widetilde{X}$  over $k$ and a collection $\bound \subset \widetilde{X}$ of closed points with the property that $k(x)$ is a separable extension $k$ for all $x \in \bound$.  We will consider $\bound$ as a closed subscheme $j \colon \bound \to \widetilde{X}$ with the reduced scheme structure, and the separability assumption is equivalent to the assumption that $\bound$ is $k$-\'{e}tale.  

To $\widetilde{X}$, we associate the curve $\Xsing$ defined by the following pushout diagram
			\begin{equation}
				\begin{CD} \label{Diagram: Curvepush-out}
					\bound	 				@>j>> 				\widetilde{X} \\
					@VVV										@VfVV \\
					\Spec(k) 					@>\overline{x}_{0}>>	\Xsing
				\end{CD}
			\end{equation}
The pushout exists by \cite[Theorem~5.4]{ferrand03}.\hidden{

\begin{lm}
	Diagram (5.4.1) in \cite[Theorem~5.4]{ferrand03} is a pushout of schemes.
\end{lm}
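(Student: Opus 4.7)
The plan is to reduce the assertion to a direct invocation of Ferrand's theorem, verifying that his hypotheses are met and that his notion of pushout is exactly the one meant here. Since $\widetilde X \to \Xsing$ is set-theoretically an isomorphism away from the singular point $\overline{x}_0$, the entire substantive content is local near $\overline{x}_0$, where the finitely many closed points comprising $\partial X$ are being collapsed.

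First I would check that Ferrand's hypotheses for \cite[Theorem~5.4]{ferrand03} apply to our situation with $X'=\widetilde X$, $Z = \partial X$, and $Y=\Spec k$. The closed immersion $\partial X\hookrightarrow\widetilde X$ is tautological, and $\partial X\to\Spec k$ is finite because $\partial X$ is a disjoint union of $\Spec k(x)$ over a finite set of closed points with each $k(x)/k$ finite separable. The existence of an affine open $U\subset\widetilde X$ containing $\partial X$ that is stable under the identification is guaranteed since $\partial X$ is zero-dimensional and $\widetilde X$ is a curve. With these hypotheses verified, Ferrand's theorem produces a scheme $\Xsing$ together with the cocartesian square of Diagram~\eqref{Diagram: Curvepush-out}, where explicitly, writing $U=\Spec A$ and $\partial X=\Spec C$, the image of $U$ in $\Xsing$ is the affine open $\Spec B$ with $B = k\times_C A$.

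Second, I would verify the universal property directly for affine test schemes $T=\Spec R$. Morphisms $\Xsing\to T$ restrict on the affine open $\Spec B$ to ring maps $R\to B$, and $\Hom_{\mathrm{Ring}}(B,R)$ is the fiber product of $\Hom_{\mathrm{Ring}}(A,R)$ and $\Hom_{\mathrm{Ring}}(k,R)$ over $\Hom_{\mathrm{Ring}}(C,R)$ by the universal property of the pullback of rings. Away from $\overline{x}_0$ the map $\widetilde X\setminus\partial X\cong\Xsing\setminus\{\overline{x}_0\}$ is an isomorphism and the factorization is forced.

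Finally, to pass from affine $T$ to arbitrary $T$ I would glue: given a test scheme $T$ and compatible $g\colon\widetilde X\to T$ and $h\colon\Spec k\to T$, cover $T$ by affine opens and observe that the preimages in $\Xsing$ are open, that the unique morphisms produced affinely agree on overlaps by uniqueness, and that their restrictions to $\widetilde X\setminus\partial X$ recover the restriction of $g$. This is the step where the care is needed and is the main (and only) obstacle beyond citing Ferrand, but it amounts to a standard Zariski gluing argument using that scheme morphisms to a fixed target form a Zariski sheaf on the source.
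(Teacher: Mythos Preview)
Your approach differs substantially from the paper's, and it overlooks the key shortcut.  The lemma is asserting that Ferrand's diagram is a pushout \emph{in the category of schemes}; Ferrand's Theorem~5.4 itself already asserts it is a pushout \emph{in the category of ringed spaces}.  The paper's proof exploits this directly: for a test scheme $Z$, scheme morphisms embed into ringed-space morphisms, and Ferrand's theorem gives a bijection at the ringed-space level, so injectivity of the comparison map is automatic.  For surjectivity, given compatible scheme morphisms $(h,g)$, Ferrand's result produces a ringed-space morphism $f \colon X \to Z$, and the only thing left to check is that $f$ is a morphism of \emph{locally} ringed spaces, i.e.\ that $f_x^{-1}(\mathfrak m_x)=\mathfrak m_z$ on stalks.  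This is done by cases: on $X\setminus Y$ the map $f$ agrees with $h$; at a point $x$ in the image of $Y$ one composes with the map to a preimage $y\in Y$ and uses that $g$ is a scheme morphism to get the required containment.

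By contrast, you rebuild the universal property from scratch via the ring fiber product and a Zariski gluing argument on the target.  Your sketch can likely be made to work in the specific curve case, but it has a couple of soft spots: you need to know that the open subsets of $\Xsing$ obtained by pulling back affine opens of $T$ are themselves pushouts of the same form (so that your affine step applies to them), and you need to check the gluing on overlaps explicitly.  More importantly, you have restricted to the curve pushout \eqref{Diagram: Curvepush-out}, whereas the lemma is stated for Ferrand's general diagram~(5.4.1) and is invoked elsewhere in the paper for other pushouts (e.g.\ \eqref{defineJnat}).  The paper's argument handles the general case with no extra effort, since the locality check on stalks uses only that $h$ and $g$ are scheme morphisms and that $X\setminus Y \hookrightarrow X$ is an open immersion on which $f=h$.
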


\begin{proof}
Let $Z$ be a scheme. We need to show that the natural map $$\theta: \Mor_{sch} (X, Z) \to \Mor_{sch} (X', Z) \times_{\Mor_{sch} (Y', Z)} \Mor_{sch} (Y, Z)$$ is a bijection. Morphisms of schemes are the subset of morphisms of ringed spaces such that for all points $x$ with image $z$, the induced map of local rings $f_x: \mathcal{O}_z \to \mathcal{O}_x$ is local in the sense that $f_x^{-1}(m_x) = m_z$.  Thus the vertical arrows in the commutative diagram $$\xymatrix{ \Mor_{sch} (X, Z) \ar[r] \ar[d] & \ar[d] \Mor_{sch} (X', Z) \times_{\Mor_{sch} (Y', Z)} \Mor_{sch} (Y, Z) \\ \Mor_{ring-space} (X, Z) \ar[r] & \Mor_{ring-space} (X', Z) \times_{\Mor_{ring-space} (Y', Z)} \Mor_{ring-space} (Y, Z)}$$ are injective. By \cite[Theorem~5.4]{ferrand03}, the bottom horizontal arrow is a bijection. Thus $\theta$ is injective. Suppose that $(h,g)$ is an element of $ \Mor_{sch} (X', Z) \times_{\Mor_{sch} (Y', Z)} \Mor_{sch} (Y, Z)$. There exists $f: X \to Z$ a map of ringed spaces restricting to $(h,g)$. It remains to show that $f_x^{-1} (m_x) = m_z$. For $x \in X -Y$, the map $f$ equals $h$ on the open neighborhood $X-Y$ by \cite[Theorem~5.4(d)]{ferrand03}. Since $h$ is a map of schemes the equality holds. Otherwise, there exists $y \in Y$ with $u(y) = x$, and we have $$\xymatrix{g: \mathcal{O}_z \ar[r]^f & \mathcal{O}_x \ar[r]^u & \mathcal{O}_y}.$$ Since $f^{-1} (m_x) \supseteq g^{-1} (m_y) = m_z$, we have $f_x^{-1} (m_x) \supseteq m_z$. For any map of local rings, $f_x^{-1} (m_x) \subseteq m_z$, giving the desired equality.
\end{proof}

}  (Certainly $\widetilde{X}$  satisfies Condition AF because the curve is projective.)  Furthermore, $\widetilde{X}$ is smooth over $k$ and $f$ is finite by \cite[Proposition~5.6]{ferrand03}, so $f \colon \widetilde{X} \to \Xsing$ is the normalization map.  We call the singularity $x_0 := \overline{x}_{0}(0)$ an \textbf{ordinary fold singularity}.  We write $b(x_0) := \operatorname{rank}_{k} H^{0}(\bound, \calO_{\bound})$ for  the number of geometric \textbf{branches} of $X$ at $x_0$.

Diagram~\eqref{Diagram: Curvepush-out} remains a pushout diagram after tensoring with an arbitrary field extension $k'$ of $k$ by \cite[Lemma~4.4]{ferrand03}.  Since $\widetilde{X}_{k'}$ is $k'$-smooth, $\Xsing_{k'}$ is thus constructed from a $k'$-smooth curve by identifying  a finite collection of closed points with separable residue fields.   

For later use, we need a concrete description of  the local ring of $\Xsing$ at $x_0$.  Ferrand  constructs $\Xsing$ as the pushout in the category of ringed spaces.  As such a pushout, the structure sheaf $\calO_{\Xsing}$ is the pullback defined by
			$$
				\begin{CD} 
					f_{*} \calO_{\bound} 				@<j^{*}<< 				f_{*} \calO_{\widetilde{X}} \\
					@AAA										@Af^{*}AA \\
					k(x_0) 						@<\overline{x}_{0}^{*}<<	\calO_{\Xsing}.
				\end{CD}
			$$
			Equivalently, $\calO_{\Xsing}$ can be described by the exact sequence
			$$
				0 \to \calO_{\Xsing} \to f_{*}\calO_{\widetilde{X}} \to f_{*}\calO_{\bound}/k(x_0) \to 0.
			$$
			Taking the stalk at $x_0$, we get
			\begin{equation} \label{Eqn: RingInNormal}
				0 \to \calO_{\Xsing, x_0} \to \calO_{\widetilde{X}, f^{-1}(x_0)} \to \bigoplus_{f(\widetilde{x})=x_0} k(\widetilde{x})/k(x_0) \to 0.
			\end{equation}	
			Here $\calO_{\widetilde{X}, f^{-1}(x_0)}$ is the semilocalization of $\widetilde{X}$ at the closed subset $f^{-1}(x_0) \subset \widetilde{X}$.	 When $k=\kbar$, we have $k(\widetilde{x})=k$ for all $\widetilde{x} \in f^{-1}(x_0)$, and so $\oplus k(\widetilde{x})/k(x_0)$ is just a $k(x_0)$-module of rank $b(x_0)-1$.  
						
		We will occasionally need to describe the curves $\overline{Y}$ lying between $\Xsing$ and its normalization $\widetilde{X}$.  These curves are exactly the curves constructed by partitioning the points $\partial X$ into subsets and glueing each subset together.  More precisely, given a finite surjection $\bound \to \partial Y$, the two obvious pushout squares fit into a commutative diagram
			\begin{equation} \label{Eqn: Ypush-out}
				\begin{CD}
					\bound		@>j>>				\widetilde{X} \\
					@VVV						@VhVV \\
					\partial Y		@>i>>				\overline{Y} \\
					@VVV						@VgVV \\
					\Spec(k)		@>\overline{x}_0>>		\Xsing.
				\end{CD}
			\end{equation}
	The morphism $g$ is proper and birational, so $\overline{Y}$ lies between $\Xsing$ and $\widetilde{X}$, and every  curve lying between $\Xsing$ and $\widetilde{X}$ can be constructed in this manner.  Indeed, suppose that we are given a factorization $\widetilde{X} \stackrel{h}{\longrightarrow} \overline{Y} \stackrel{g}{\longrightarrow} \Xsing$ with $g$ a proper birational map.  The scheme $\partial Y := g^{-1}(x_0)$ naturally fits into the commutative diagram  \eqref{Eqn: Ypush-out}, and now the squares are pullback squares.  These squares are in fact also pushout squares.  To verify this, we can reduce to the affine case (as $h$ and $g$ are affine), in which case the claim follows by direct computation.   

\section{Construction of the presentation scheme} \label{Section: PresentationScheme}
Here we define and study the presentation scheme associated to a curve $\Xsing$ from Section~\ref{Section: FoldSingularities}.  Thus we fix  a smooth curve $\widetilde{X}$ over $k$ and a collection $\bound \subset \widetilde{X}$ of closed points with separable residue fields and then define  $\Xsing$ by the Pushout Diagram  \eqref{Diagram: Curvepush-out}. As in Section~\ref{Section: FoldSingularities}, we write $x_0 \in \Xsing$ for the unique singularity of $\Xsing$, $b(x_0) := \operatorname{rank}_{k} H^{0}(\bound, \calO_{\bound})$ for the number of geometric branches, and $f \colon \widetilde{X} \to \Xsing$ for the normalization of $\Xsing$.  

Our definition of the presentation scheme is motivated by the following observation: if $L$ is a line bundle on $\Xsing$, then the adjoint 
\begin{equation} \label{Eqn: MotivatePres}
	i_{\text{can}} \colon L \to f_* f^{*} L
\end{equation}
of the identity $f^{*} L \to f^{*} L$ is an inclusion whose cokernel is naturally isomorphic to 
\begin{equation} \label{Eqn: Cokernel}
	f^{*}L|_{\bound}/L|_{x_0} = (\bigoplus_{f(\widetilde{x})=x_0} k(\widetilde{x}) \otimes f^{*}L)/ k(x_0) \otimes L,
\end{equation}
which is a $k(x_0)$-module of rank $b(x_0)-1$.  This assertion is the exact sequence \eqref{Eqn: RingInNormal} when $L=\calO_{\Xsing}$ and is proven below in Lemma~\ref{Lemma: ClassifyOne} when $L$ is any line bundle.  With $i_{\text{can}}$ in mind, we make the following definition.

\begin{df}
	Let $T$ be a $k$-scheme.  A family of \textbf{presentations} of a family of rank $1$, torsion-free sheaves $I$ over $T$ is an injective homomorphism $i \colon I \to (f_{T})_{*}M$ from  $I$ to the direct image of a line bundle $M$ on $\widetilde{X}_{T}$ with the property that the cokernel is a locally free $\calO_{\{x_0\}\times T}$-module of rank $b(x_0)-1$.  A \textbf{presentation} is a family of presentations over $T=\Spec(k)$.
\end{df}

Presentations are functorial in the following sense.  Suppose that we are given a morphism $s \colon S \to T$ and a family of presentations $i \colon I \to (f_{T})_{*} M$ over $T$.  Because $f$ is finite, the base change homomorphism $\phi_{s} \colon s^{*} (f_{T})_{*}M \to (f_{S})_{*} s^{*} M$ is an isomorphism, and the resulting composition 
\begin{equation} \label{Eqn: PresFunctoriality}
	\phi_{s} \circ s^{*}i \colon s^{*}I \to (f_{S})_{*} s^{*}M
\end{equation}
is a family of  presentations over $S$.  Indeed, because $\Coker(i)$ is $T$-flat, the homomorphism $s^{*}i$, or equivalently the homomorphism \eqref{Eqn: PresFunctoriality}, is injective with cokernel equal to $s^{*}(\Coker(i))$ which is a locally free $\calO_{\{x_0\}\times S}$-module of rank $b(x_0)-1$. We will use this functoriality to define a functor, but first we need to put an equivalence relation on presentations.
\begin{df}
Two families of presentations $i \colon I \to (f_{T})_{*}M$ and $i' \colon I' \to (f_{T})_{*} M'$ over $T$ are said to be \textbf{equivalent} if there exist a line bundle $N$ on $T$ and isomorphisms $I \cong I' \otimes_{T} N$, $(f_{T})_{*}M \cong (f_{T})_{*}M' \otimes N$ that make the following diagram commute 
$$
	\begin{CD}
		I		@>i>>		(f_{T})_{*}M \\
		@VVV							@VVV \\
		I' \otimes_{T} N		@>i'>>		(f_{T})_{*}M' \otimes_{T} N.
	\end{CD}
$$
\end{df}
Using this definition of equivalence, we define the presentation functor.
\begin{df}
	The \textbf{presentation functor} $\Pres^{\sharp} \Xsing$ of $\Xsing$ is the functor $$\Pres^{\sharp} \Xsing \colon \text{$k$-Sch} \to \text{Sets}$$ defined as follows. Given a $k$-scheme $T$, we set $(\Pres^{\sharp} \Xsing)(T)$ equal to the set of equivalence classes of families of presentations of degree $-1$ sheaves over $T$.  Given a morphism $s \colon S \to T$ of $k$-schemes, we define $\Pres^{\sharp} s \colon (\Pres^{\sharp}\Xsing)(T) \to (\Pres^{\sharp}(\Xsing)(S)$ by the rule that sends $i$ to the presentation $\phi_{s} \circ s^{*}i$ in \eqref{Eqn: PresFunctoriality}.  
\end{df}

The presentation functor is represented by a Grassmannian bundle over the Picard scheme $\Jac^{-1} \widetilde{X}$ under suitable hypotheses.  Recall the relative Grassmannian $$\operatorname{Grass}(\calV,b) \to B$$ associated to e.g.,~a non-negative integer $b$, an algebraic $k$-scheme $B$, and a locally free $\calO_{B}$-module $\calV$ is a $k$-scheme $\operatorname{Grass}(\calV,b)$  whose $T$-valued points are pairs $(t,q)$ consisting of a $k$-morphism $t \colon T \to B$ and a surjective homomorphism $t^{*}\calV \to \calW$ onto a locally free $\calO_{T}$-module of rank $b$.

The exact relation between a Grassmannian bundle and the presentation functor is described by the following lemma.

\begin{lm} \label{Lemma: PresExists}
Assume  a universal line bundle  $M_{\text{uni}}$ exists on $\widetilde{X} \times \Jac^{-1} \widetilde{X} $, and let $$\calV := \left( ( f\times 1)_{*}(M_{\text{uni}}) \right)| \{ x_0 \} \times \Jac^{-1} \widetilde{X}.$$ Then the presentation functor is representable by the projective $k$-scheme $$\operatorname{Grass}(\calV, b(x_0)-1).$$
\end{lm}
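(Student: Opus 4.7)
The plan is to construct a universal presentation over $G := \operatorname{Grass}(\calV, b(x_0)-1)$ and verify directly that it represents $\Pres^{\sharp} \Xsing$. Let $\pi \colon G \to \Jac^{-1} \widetilde{X}$ denote the bundle projection and $q_{\text{uni}} \colon \pi^{*} \calV \to \calW_{\text{uni}}$ the universal rank $b(x_0)-1$ quotient. Define $M_{G} := (1 \times \pi)^{*} M_{\text{uni}}$, a line bundle on $\widetilde{X} \times G$. Because $f$ is finite, base change supplies a canonical isomorphism $((f \times 1)_{*} M_{G})|_{\{x_0\} \times G} \cong \pi^{*} \calV$, so $q_{\text{uni}}$ extends to a surjection $(f \times 1)_{*} M_{G} \twoheadrightarrow \iota_{*} \calW_{\text{uni}}$, where $\iota \colon \{x_0\} \times G \hookrightarrow \Xsing \times G$ is the closed immersion. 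Let $I_{G}$ be its kernel; the inclusion $I_{G} \hookrightarrow (f \times 1)_{*} M_{G}$ is the candidate universal presentation.

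I would then verify that $I_{G}$ is a $G$-family of rank $1$, torsion-free sheaves with the correct cokernel. Since $M_{G}$ is $G$-flat and $f \times 1$ is finite, $(f \times 1)_{*} M_{G}$ is $G$-flat; and $\iota_{*} \calW_{\text{uni}}$ is $G$-flat because $\calW_{\text{uni}}$ is locally free on $G$. The defining short exact sequence $0 \to I_{G} \to (f \times 1)_{*} M_{G} \to \iota_{*} \calW_{\text{uni}} \to 0$ thus forces $I_{G}$ to be $G$-flat and to restrict to a short exact sequence on every geometric fiber $t \in G$. On such a fiber, $I_{G}|_{t}$ sits inside $f_{*}(M_{G}|_{t})$, the direct image of a degree $-1$ line bundle on $\widetilde{X}$; this is a rank $1$ torsion-free $\calO_{\Xsing}$-sheaf (it is a line bundle away from $x_0$ and has generic rank $1$ since $f$ is birational), so $I_{G}|_{t}$ is rank $1$ and torsion-free, with cokernel $\calW_{\text{uni}}|_{t}$ locally free of rank $b(x_0)-1$ and supported at $x_0$, as required.

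The inverse natural transformation sends a family of presentations $i \colon I \to (f_{T})_{*} M$ to the following $T$-point of $G$. Since $M$ is a line bundle on $\widetilde{X}_{T}$ of fiberwise degree $-1$, the universal property of $M_{\text{uni}}$ produces a unique morphism $t \colon T \to \Jac^{-1} \widetilde{X}$ together with a line bundle $N$ on $T$ (determined up to isomorphism once $t$ is fixed) satisfying $M \cong (1 \times t)^{*} M_{\text{uni}} \otimes p_{T}^{*} N$. Finite base change then identifies $((f_{T})_{*} M)|_{\{x_0\} \times T}$ with $t^{*} \calV \otimes N$, and the surjection onto $\Coker(i)$ yields, after twisting by $N^{-1}$, a surjection $t^{*} \calV \twoheadrightarrow \Coker(i) \otimes N^{-1}$ onto a locally free rank $b(x_0)-1$ sheaf, i.e., a $T$-point of $G$. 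This is well-defined on equivalence classes because replacing $(I, M)$ by $(I \otimes N', M \otimes p_{T}^{*} N')$ leaves $t$ unchanged and replaces $N$ by $N \otimes N'$, leaving the untwisted Grassmannian quotient invariant.

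Finally I would check mutual inverseness by unraveling the definitions against the universal property of $(q_{\text{uni}}, M_{\text{uni}})$. The main obstacle lies in the fiberwise torsion-free property verified in Step 2: it is essential that $\calW_{\text{uni}}$ be locally free on $G$ (rather than merely coherent), so that $\iota_{*} \calW_{\text{uni}}$ is $G$-flat and the defining short exact sequence restricts to a short exact sequence on geometric fibers; without this flatness a Tor contribution at $x_0$ could cause $I_{G}$ to acquire torsion or jump in rank along $\{x_0\} \times G$, breaking the moduli interpretation.
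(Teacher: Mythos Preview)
Your proposal is correct and follows essentially the same route as the paper: build a universal presentation over $G$ from the universal quotient via the finite base change isomorphism $((f\times 1)_* M_G)|_{\{x_0\}\times G}\cong \pi^*\calV$, and construct the inverse by sending a presentation to the quotient $t^*\calV \twoheadrightarrow \Coker(i)\otimes N^{\pm 1}$ obtained from the classifying map $t$ for $M$. The paper's argument is nearly word-for-word the same, though it is more explicit on one point you gloss over: in the inverse direction you check invariance under replacing $(I,M)$ by $(I\otimes N', M\otimes p_T^*N')$, but you do not address the dependence on the \emph{choice of isomorphism} $M\cong (1\times t)^*M_{\text{uni}}\otimes p_T^*N$; the paper handles this by invoking that $f_*M$ is simple (so any two such isomorphisms differ by a scalar from $\calO_T^*$, which leaves the Grassmannian point unchanged). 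This is a routine verification, not a real obstruction.
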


\begin{rmk}
When $\Xsing$ is a nodal curve, this lemma was proven by Altman--Kleiman \cite[Proposition~9]{altman90}, and the following proof is closely modeled on their argument.
\end{rmk}

\begin{proof}
We construct a natural transformation $\operatorname{Grass}(\calV, b(x_0)-1) \to \Pres^{\sharp} \Xsing$, and then we construct the inverse transformation.  The main point is that  the cokernel of a family of presentations $i \colon I \to (f_{T})_{*} M$ can be written as $q \colon (f_{T})_{*} M \to (j_{T})_{*} \calW$ for some locally free sheaf $\calW$ on $\{ x_0 \} \times T$ of rank $b(x_0)-1$, and the rule that sends $i$ to the adjoint $q^{\#} \colon (f_{T})_{*} M |\{ x_0 \} \times T \to \calW$ essentially defines the isomorphism $\Pres^{\sharp} \Xsing \cong \operatorname{Grass}(\calV, b(x_0)-1)$. 

We construct $\operatorname{Grass}(\calV, b(x_0)-1) \to \Pres^{\sharp} \Xsing$  by exhibiting a family of presentations over $\operatorname{Grass}(\calV, b(x_0)-1)$.  Temporarily set $\operatorname{G}$ equal to $\operatorname{Grass}(\calV, b(x_0)-1)$, $g \colon \operatorname{G} \to \{ x_0 \} \times \Jac^{-1} \widetilde{X}$ equal to the structure morphism, $\calW_{\operatorname{G}}$ for the universal quotient, $q_{\text{uni}} \colon g^{*} \calV \to \calW_{\operatorname{G}}$ equal to the universal surjection, and $j \colon \{ x_0 \} \to X$, $j' \colon f^{-1}(x_0) \to \widetilde{X}$ equal to the inclusions.  

Consider the line bundle $M := (1\times g)^{*} M_{\text{uni}}$ on $ \widetilde{X} \times \operatorname{G}$.  The authors claim that there is a canonical isomorphism  $\phi_{\text{can}} \colon (f \times 1)_{*} M|{\operatorname{G} \times \{x_0\}} \cong g^{*} \calV$. Given the claim, the composition
$$
    (f \times 1)_{*} M| { \{ x_0 \} \times \operatorname{G} } \stackrel{\phi_{\text{can}}}{\longrightarrow} g^{*} \calV \stackrel{q_{\text{can}}}{\to} \calW_{\operatorname{G}}
$$
is adjoint to a family of presentations $(f \times 1)_{*} M \to (j \times 1)_{*} \calW_{\operatorname{G}}$  over $\operatorname{G}$, and this family defines the desired morphism  $\operatorname{G} \to \Pres^{\sharp} \Xsing$.

The existence of $\phi_{\text{can}} \colon (f \times 1)_{*} M|{ \{ x_0 \} \times \operatorname{G} } \cong g^{*} \calV$ follows from the cohomological flatness of $f$, which follows since $f$ is finite.  Cohomological flatness implies that the base change homomorphism 
\begin{equation} \label{Eqn: FirstBaseChange}
	(f \times 1)_{*} M|{ \{ x_0 \} \times \operatorname{G}} \cong ( f|_{f^{-1}(x_0)} \times 1)_{*} (j' \times g)^{*} M_{\text{uni}}
\end{equation}
is an isomorphism.  A second application of cohomlogical flatness shows that the sheaf on the right-hand side of Equation~\eqref{Eqn: FirstBaseChange} appears in another base change isomorphism
$$
	(j \times g)^{*} (f \times 1)_{*} M_{\text{uni}} 	\cong 	(f|_{f^{-1}(x_0)} \times 1)_{*} (j' \times g)^{*} M_{\text{uni}},
$$
and we have
\begin{align} \label{Eqn: SecondBaseChange}
	(1 \times f|_{f^{-1}(x_0)})_{*} (g \times j' )^{*} M_{\text{uni}} 	\cong& (g \times j)^{*} (f \times 1)_{*} M_{\text{uni}} \\
													\cong& g^{*} (1 \times j)^{*} (f \times 1)_{*} M_{\text{uni}} \notag \\
													\cong& g^{*} \calV \notag
\end{align}
We now define $\phi_{\text{can}}$ to be the composition of \eqref{Eqn: FirstBaseChange} and \eqref{Eqn: SecondBaseChange}.

To show that $\operatorname{G} \to \Pres^{\sharp} \Xsing$ is an isomorphism, we construct  the inverse natural transformation.  Thus suppose that $I \to (f_{T})_{*}M$ is  a family of presentations over a given $k$-scheme $T$.  By definition, the cokernel $(f_{T})_{*}M / I$ of the presentation can be written as $(j_{T})_{*}\calW$ for some locally free sheaf $\calW$ on $\{ x_0 \} \times T$ of rank $b(x_0)-1$.  We now construct a surjection from $t^{*} \calV$ to a sheaf constructed from $\calW$.

The line bundle $M$ defines a morphism  $t \colon T \to \Jac^{-1} \widetilde{X}$.  The pullback $(1 \times t)^{*} M_{\text{uni}}$ may not be isomorphic to $M$, but by \cite[5.6(i)]{altman80} there exists a line bundle $N$ on $T$ with the property that there exists an isomorphism $(1 \times t)^{*} M_{\text{uni}} \cong M \otimes N$.  (Here we are confusing $N$ with its pullback under $X \times T \to T$, and we will continue to do so for the rest of the proof.)  If we fix one such isomorphism $\alpha$, then we can consider the composition 
\begin{equation} \label{Eqn: AdjointConstruct}
	(i_{T})^{*} (f_{T})^{*} (1 \times t)^{*} M_{\text{uni}} \stackrel{(i_{T})_{*} (f_{T})_{*} \alpha}{\longrightarrow}	(i_{T})^{*} (f_{T})_{*} M \otimes N 	\stackrel{(q \otimes 1)^{\sharp}}{\longrightarrow} \calW \otimes N
\end{equation}
with  $(q \otimes 1)^{\sharp}$  the adjoint to $q \otimes 1 \colon (f_{T})_{*} M \otimes N \to (i_{T})_{*} \calW \otimes N$.  A third application of the cohomological flatness of $f$ shows that a suitable base change homomorphism defines an isomorphism
$$
	t^{*} \calV \cong (i_{T})^{*} (f_{T})_{*} (1 \times t)^{*} M_{\text{uni}},
$$
and the composition of this isomorphism with the homomorphism \eqref{Eqn: AdjointConstruct} is a surjection $t^{*} \calV \to \calW \otimes N$ with locally free quotient of rank $b(x_0)-1$.  To show that this construction defines a map $(\Pres \Xsing)(T) \to \operatorname{G}(T)$, we need to show that this surjection only depends on the equivalence class of the presentation $I \to (f_{T})_{*}M$.

Thus suppose that we are given a second $N'$ and a second isomorphism $\alpha' \colon (1 \times t)^{*} M_{\text{uni}} \cong M \otimes N'$.  Because $f_{*} M$ is simple \cite[Lemma~5.4]{altman80}, $\alpha' \circ \alpha^{-1} \colon M \otimes N \cong M \otimes N'$ must be of the form $1\otimes \beta$ for an isomorphism $\beta \colon N \cong N'$.  The isomophisms $ (i_{T})_{*} f_{T}^{*} 1 \otimes \beta \colon (i_{T})_{*} f_{T}^{*} M \otimes N \cong  (i_{T})_{*} f_{T}^{*} M \otimes N'$ and $\beta \otimes 1 \colon \calW \otimes N \cong \calW \otimes N'$ define an isomorphism between the quotient associated to $(N,\alpha)$ and the quotient associated to $(N', \alpha')$.  This shows that the construction from the previous paragraph defines a map $(\Pres \Xsing)(T) \to \operatorname{G}(T)$.  To complete the proof, we now simply observe that the maps $(\Pres \Xsing)(T) \to \operatorname{G}(T)$ and $ \operatorname{G}(T) \to (\Pres \Xsing)(T) $ are inverse to each other.
\end{proof}

Lemma~\ref{Lemma: PresExists} does not assert that $\Pres^{\sharp} \Xsing$ is representable for all $\Xsing$ because a universal line bundle $M_{\text{uni}}$ does not always exist.  When $M_{\text{uni}}$ fails to exist, we do not prove that  $\Pres^{\sharp} \Xsing$ is representable, but in Proposition~\ref{Prop: PresExists} below we prove that the associated \'{e}tale sheaf is representable.  Motivated by this, we make the following definition.

\begin{df}
	Define $\Pres^{\text{\'{e}t}} \Xsing$ to be the \'{e}tale sheaf associated to  $\Pres^{\sharp} \Xsing$.  A $k$-scheme $\Pres \Xsing$ that represents $\Pres^{\text{\'{e}t}} \Xsing$ is called the \textbf{presentation scheme}.
\end{df}

We prove that the presentation scheme exists by reducing to Lemma~\ref{Lemma: PresExists}, and we make the reduction by extending scalars. If $k' \supset k$ is a field extension, then $\Xsing_{k'}$ has an associated presentation functor because, as we observed in Section~\ref{Section: FoldSingularities}, $\Xsing_{k'}$ is described by a suitable pushout construction.  Comparing the two definitions of the presentation functor, we have:

\begin{lm} \label{Lemma: PresCommutes}
	The formations of  $\Pres^{\sharp} \Xsing$ and  $\Pres^{\text{\'{e}t}} \Xsing$ commute with field extensions $k' \supset k$.  
\end{lm}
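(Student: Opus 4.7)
The plan is to prove the presheaf-level statement directly and then pass to the \'etale sheafification formally. Let $T$ be a $k'$-scheme, regarded as a $k$-scheme via the composition $T \to \Spec k' \to \Spec k$. By \cite[Lemma~4.4]{ferrand03}, the pushout diagram defining $\Xsing_{k'}$ is the base change of Diagram~\eqref{Diagram: Curvepush-out}, so there are canonical identifications $\Xsing \times_k T = \Xsing_{k'} \times_{k'} T$ and $\widetilde{X} \times_k T = \widetilde{X}_{k'} \times_{k'} T$ that carry the finite normalization map $f_T$ viewed over $k$ to the map $f_T$ viewed over $k'$. Under these identifications, a family of rank-one torsion-free sheaves $I$ on $\Xsing \times_k T$, a line bundle $M$ on $\widetilde{X} \times_k T$, and an injection $i \colon I \to (f_T)_{*} M$ are literally the same data in both settings, and two presentations are equivalent in one picture if and only if they are equivalent in the other, since the definition of equivalence only involves line bundles on the base $T$.

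The point requiring verification is the cokernel condition. Because $\bound$ is $k$-\'etale, the formation of $H^0(\bound, \calO_{\bound})$ commutes with the extension $k \subset k'$, giving $b(x_{0,k'}) = \operatorname{rank}_{k'} H^{0}(\bound_{k'}, \calO_{\bound_{k'}}) = \operatorname{rank}_{k} H^{0}(\bound, \calO_{\bound}) = b(x_0)$. Moreover, the pushout construction yields $k(x_0) = k$ and $k(x_{0,k'}) = k'$, so the reduced closed subschemes $\{x_0\} \times_k T$ and $\{x_{0,k'}\} \times_{k'} T$ both coincide with $T$ as closed subschemes of $\Xsing \times_k T = \Xsing_{k'} \times_{k'} T$. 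Hence the requirement that $\Coker(i)$ be locally free of rank $b(x_0) - 1$ on the scheme-theoretic preimage of the singular point is the same condition in both pictures. This produces the natural bijection $(\Pres^{\sharp} \Xsing)(T) = (\Pres^{\sharp} \Xsing_{k'})(T)$, which is functorial in $T$, establishing the claim for $\Pres^{\sharp}$.

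For the sheafified statement, I would observe that the small \'etale site of a $k'$-scheme $T$ is the same regardless of whether $T$ is viewed over $k$ or over $k'$: in either case an \'etale cover is the same underlying \'etale surjective morphism of schemes. Consequently, restriction along the forgetful functor from $k$-schemes to $k'$-schemes commutes with sheafification in the \'etale topology, and the presheaf identity from the previous paragraph yields the desired identity $(\Pres^{\text{\'{e}t}} \Xsing)|_{k'\text{-Sch}} = \Pres^{\text{\'{e}t}} \Xsing_{k'}$.

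There is no substantive obstacle, since the statement is essentially bookkeeping. The only nonformal input is the invariance of $b(x_0)$ under the field extension $k \subset k'$ and the coincidence of the relevant one-point strata with $T$, both of which are forced by the \'etaleness hypothesis on $\bound$ and by the residue-field values dictated by the defining pushout.
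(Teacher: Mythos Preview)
Your proposal is correct and is essentially the argument the paper has in mind. The paper does not write out a proof at all: it simply remarks that $\Xsing_{k'}$ is described by the same kind of pushout (citing the observation in Section~\ref{Section: FoldSingularities}, which in turn rests on \cite[Lemma~4.4]{ferrand03}) and then says the lemma follows by ``comparing the two definitions of the presentation functor.'' Your write-up is exactly that comparison made explicit, and the two points you single out---that $b(x_0)$ is unchanged because $\bound$ is $k$-\'etale, and that $\{x_0\}\times_k T$ and $\{x_{0,k'}\}\times_{k'}T$ both equal $T$ because the pushout forces $k(x_0)=k$---are the only places where something beyond tautology happens.

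One small slip of phrasing: the forgetful functor goes from $k'$-schemes to $k$-schemes, not the other way around; you are restricting the presheaf $\Pres^{\sharp}\Xsing$ on $k$-Sch along that forgetful functor to obtain a presheaf on $k'$-Sch. The mathematics you wrote is unaffected.
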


\begin{pr} \label{Prop: PresExists}
The presentation scheme exists and is a projective $\Jac^{-1} \widetilde{X}$-scheme.
\end{pr}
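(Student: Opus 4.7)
The plan is to represent $\Pres^{\text{\'{e}t}} \Xsing$ by descending, along an étale cover of $\Jac^{-1} \widetilde{X}$ over which a universal line bundle exists, the Grassmannian bundle produced by Lemma~\ref{Lemma: PresExists}. First I would choose an étale cover $\varphi \colon U \to \Jac^{-1} \widetilde{X}$ over which a universal line bundle $M_{\text{uni}}$ on $\widetilde{X} \times U$ exists. The obstruction to such an $M_{\text{uni}}$ is a Brauer class on $\Jac^{-1} \widetilde{X}$ that is killed by any étale refinement trivializing the corresponding $\G_m$-gerbe; concretely, after base change to any étale neighborhood of a geometric point of $\Jac^{-1}\widetilde{X}$ over which $\widetilde{X}$ acquires a section, a Poincar\'e bundle exists by the standard rigidification argument.

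Next I would apply Lemma~\ref{Lemma: PresExists} in its natural relative form to the curve $\widetilde{X}_U \to U$, obtaining a projective $U$-scheme $G_U := \operatorname{Grass}(\calV_U, b(x_0)-1) \to U$ that represents the pullback of $\Pres^{\sharp} \Xsing$ to the category of $U$-schemes. The proof of Lemma~\ref{Lemma: PresExists} relies only on cohomological flatness of $f$ together with standard base-change isomorphisms, so it goes through verbatim over any base on which a universal line bundle exists. Since $\Pres^{\sharp} \Xsing$ restricted to $U$-schemes is then representable, and any representable functor is an étale sheaf, $G_U$ also represents the restriction of $\Pres^{\text{\'{e}t}} \Xsing$ to $U$.

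I would then equip $G_U$ with canonical descent data along $\varphi$. On the double fibre product $U \times_{\Jac^{-1}\widetilde{X}} U$, the two pullbacks of $M_{\text{uni}}$ are both universal line bundles for the same map to the Picard functor, so they differ by tensoring with a line bundle pulled back from the base, by \cite[5.6(i)]{altman80}. Consequently the corresponding sheaves $\calV_U$ differ by tensoring with a line bundle on $\{x_0\} \times (U \times_{\Jac^{-1}\widetilde{X}} U)$, and since Grassmannian bundles are canonically invariant under such tensoring, one obtains a canonical isomorphism between the two pullbacks of $G_U$. By Yoneda applied to the étale sheaf $\Pres^{\text{\'{e}t}} \Xsing$, the cocycle condition on triple overlaps is automatic. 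Effective descent for projective morphisms along étale covers then produces a projective $\Jac^{-1} \widetilde{X}$-scheme $\Pres \Xsing$ representing $\Pres^{\text{\'{e}t}} \Xsing$.

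The main obstacle I anticipate is ensuring that the étale descent is effective rather than merely formal, namely that the descent data carries with it a relatively ample line bundle compatible with the gluing. This is handled because each $G_U$ has a canonical Pl\"ucker polarization, and because the canonical isomorphism between the two pullbacks of $G_U$ is itself obtained from Yoneda for $\Pres^{\text{\'{e}t}} \Xsing$, it automatically respects this polarization; descent of polarized projective morphisms along faithfully flat quasi-compact covers then suffices to conclude.
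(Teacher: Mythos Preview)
Your overall strategy—represent the \'etale sheaf locally by Lemma~\ref{Lemma: PresExists} and then descend—is sound, and it is in the same spirit as the paper's argument, but both the choice of cover and the mechanism for effectivity differ from what the paper does. The paper descends along a finite separable field extension $k'/k$ (chosen so that $\widetilde{X}_{k'}$ acquires a $k'$-point, whence a universal line bundle exists over $\Jac^{-1}\widetilde{X}_{k'}$), rather than along an \'etale cover of $\Jac^{-1}\widetilde{X}$. More importantly, the paper handles effectivity by a completely different device: it considers the morphism
\[
\Pres\Xsing_{k'}\;\longrightarrow\;\Jac^{-1}\widetilde{X}_{k'}\times\Jacbar^{-1}\Xsing_{k'},\qquad (i\colon I\to f_*M)\;\longmapsto\;(M,I),
\]
observes that it has finite fibers (by the classification of presentations), hence is finite since source and target are $k'$-proper, and then invokes the fact that descent data for \emph{finite} morphisms are always effective. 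No polarization enters.

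Your effectivity argument, by contrast, has a genuine gap. You assert that the Yoneda descent isomorphism on the double overlap automatically respects the Pl\"ucker polarization, but it does not. Concretely, the descent isomorphism between the two pullbacks of $G_U$ is the canonical identification $\operatorname{Grass}(\calV,r)\cong\operatorname{Grass}(\calV\otimes N,r)$ where $N$ is the line bundle on the base measuring the discrepancy between the two pullbacks of $M_{\text{uni}}$. Under this identification the universal quotient $\calW$ is carried to $\calW\otimes N$, so the Pl\"ucker bundle $\det\calW$ is carried to $\det(\calW)\otimes N^{\otimes r}$, not to $\det\calW$. Thus the Pl\"ucker line bundle does \emph{not} carry descent data, and you cannot directly invoke effective descent for polarized projective morphisms. (Equivalently: at the level of the sheaf $\Pres^{\text{\'et}}\Xsing$ there is no universal cokernel, only one defined up to twist by line bundles from the base, precisely because of the equivalence relation built into the definition of the functor.) Your argument could perhaps be repaired by rigidifying more carefully or by producing a relatively ample bundle that genuinely descends, but as written this step does not go through; the paper's finite-morphism trick sidesteps the issue entirely.
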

\begin{rmk}
	 As with Lemma~\ref{Lemma: PresExists}, our proof is closely modeled on work of Altman--Kleiman \cite[Theorem~12]{altman90}.
\end{rmk}

\begin{proof}
	When $\Jac^{-1} \widetilde{X}$ admits a universal family of line bundles, the proposition is  Lemma~\ref{Lemma: PresExists}.  In general, there exists a finite separable extension $k' \supset k$ with the property that $\widetilde{X}_{k'} \to \Spec(k')$ admits a section and hence $\Jac^{-1} \widetilde{X}_{k'}$ admits a universal family of line bundles.  Thus the presentation scheme $\Pres \Xsing_{k'}$ exists.  By Lemma~\ref{Lemma: PresCommutes}, $\Pres \Xsing_{k'}$ represents $\Pres^{\text{\'{e}t}} \Xsing_{k'}$.  The $k'$-scheme $\Pres \Xsing_{k'}$ thus carries natural descent data, and to complete the proof, it is enough to show that this descent data are effective.
	
	Consider the morphism  $\Pres \Xsing_{k'} \to \Jac^{-1} \widetilde{X}_{k'} \times \Jacbar^{-1} \Xsing_{k'}$ that sends $i \colon I \to (f_{T})_{*}M$ to the pair $(M, I)$.  The descent data on $\Pres \Xsing_{k'}$ extend to descent data on this morphism.  Furthermore, the morphism has finite fibers by the proof of \cite[Lemma~8]{altman90} (or  Proposition~\ref{Prop: ClassificationOfJacbar} below).  Both $\Pres \Xsing_{k'}$ and $\Jac^{-1} \widetilde{X}_{k'} \times \Jacbar^{-1} \Xsing_{k'}$ are $k'$-proper, so $\Pres \Xsing_{k'} \to \Jac^{-1} \widetilde{X}_{k'} \times \Jacbar^{-1} \Xsing_{k'}$ must be finite.  Descent data for finite morphisms are always effective, so we can conclude that the morphism and hence  $\Pres \Xsing_{k'}$ descend to $k$.
\end{proof}

The presentation scheme admits the following two morphisms.
\begin{df}
	The morphism $\PtoPic \colon \Pres \Xsing \to \Jac^{-1} \widetilde{X}$ is defined by the rule that sends a family of presentations $i \colon I \to f_{*}M$ to the line bundle $M$.  The morphism $\PtoJbar \colon \Pres \Xsing \to \Jacbar^{-1} \Xsing$ is defined by the rule that sends $i \colon I \to f_{*}M$ to $I$.
\end{df}
Certainly the rule sending $i \colon I \to f_{*}M$ to $I$ defines a morphism from $\Pres \Xsing$ to the moduli space of all rank $1$, torsion-free sheaves, but we should explain why the rule defines a morphism into the closure $\Jacbar^{-1}\Xsing$ of the line bundle locus.  The presentation scheme $\Pres \Xsing$ is geometrically irreducible because, by Lemma~\ref{Lemma: PresExists}, $\PtoPic$ realizes $\Pres \Xsing$ as a projective bundle over $\Jac^{-1} \widetilde{X}$.  We can conclude that the image of $\PtoJbar$ is geometrically irreducible, and the image also  contains the line bundle locus because every line bundle $M$ admits a presentation --- the presentation $i_{\text{can}}$.  We can conclude that the image of $\Pres \Xsing$ is $\Jacbar^{-1} \Xsing$, and in particular $\PtoJbar$ maps to $\Jacbar^{-1} \Xsing$.

Next we construct some presentations.

\begin{df} \label{Def: AssoPresentation}
	Let $T$ be a $k$-scheme.  Given $\overline{y} \colon T \to f^{-1}(x_0)$, the image of $$\overline{y} \times 1 \colon T \to \widetilde{X}_{T}$$ is a Cartier divisor that we denote by $y \subset \widetilde{X}_{T}$.  Suppose that we are also given a line bundle $N$ on $\widetilde{X}_{T}$.  Then set 
	$$
		M := N \otimes \calO_{\widetilde{X}_{T}}(f^{-1}(x_0)_{T} - y).
	$$
	The divisor $f^{-1}(x_0)_{T} - y$ is effective, so there is a natural inclusion $i \colon (f_{T})_{*}N \to (f_{T})_{*} M$.  We define $(M,i)$ to be the \textbf{presentation associated to $(N,\overline{y})$}.  
\end{df}
To see that $(M,i)$ is a family of presentations, observe that on $\widetilde{X}_{T}$ we have the exact sequence
$$
	0 \to N \to M \to M|_{f^{-1}(x_0)_{T} - y} \to 0.
$$
Here $N \to M$ is the natural inclusion and $M \to M|_{f^{-1}(x_0)_{T} - y}$ is the natural restriction.  The direct image of $N \to M$  under $f_{T}$ is $i$, and so the cokernel of $i$ is $(f_{T})_{*} M|_{f^{-1}(x_0)_{T} - y}$.  This cokernel is a free $k(x_0)$-module of rank $b(x_0)-1$ because $f^{-1}(x_0)_{T} - y$ is a finite flat $k$-scheme of degree $b(x_0)-1$.

Because the associated presentation is a presentation, if $N$ is a line bundle on $\widetilde{X}$ and $f^{-1}(x_0)$ admits a $k$-valued point, then $f_{*}(N)$ admits a presentation: the presentation associated to $(N, \overline{y})$. We now classify all the sheaves that admit a presentation.

\begin{lm} \label{Lemma: ClassifyOne}
	Assume $k=\kbar$.  If $g \colon \overline{Y} \to \Xsing$ is a proper birational morphism out of a curve $\overline{Y}$ with at most one singularity, then $g_{*} N$ admits a presentation for every line bundle $N$ on $\overline{Y}$.
\end{lm}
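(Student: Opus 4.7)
My plan is to exhibit a presentation of $g_* N$ as an explicit twist of the natural map $g_* N \hookrightarrow f_* h^* N$, generalizing the associated-presentation construction of Definition~\ref{Def: AssoPresentation}. Write the factorization $f = g \circ h$ with $h \colon \widetilde{X} \to \overline{Y}$ (Diagram~\eqref{Eqn: Ypush-out}), and set $\partial Y := g^{-1}(x_0)$, a reduced subscheme consisting of $m$ geometric points since $k = \kbar$. Using the at-most-one-singularity hypothesis, let $y_0 \in \partial Y$ denote the unique singular point of $\overline{Y}$ when one exists, and an arbitrary point of $\partial Y$ otherwise; writing $b(y_0) := |h^{-1}(y_0)|$, the relation $m - 1 = b(x_0) - b(y_0)$ holds because every $y \in \partial Y \setminus \{y_0\}$ is smooth, i.e., satisfies $|h^{-1}(y)| = 1$.

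Define the reduced effective Cartier divisor $D := f^{-1}(x_0) \setminus h^{-1}(y_0)$ on $\widetilde{X}$, which has degree $m - 1$ and is disjoint from $h^{-1}(y_0)$, and set $M := h^* N \otimes \calO_{\widetilde{X}}(D)$. Composing two elementary injections --- the adjunction $g_* N \hookrightarrow f_* h^* N$, whose cokernel at $x_0$ is (by the analogue of \eqref{Eqn: RingInNormal} applied to $\overline{Y}$ and its normalization at $y_0$) a $k$-vector space of dimension $b(y_0) - 1$, and the twist $f_* h^* N \hookrightarrow f_* M$ induced by the inclusion $h^* N \hookrightarrow M$, whose cokernel $f_*(M|_D)$ is a $k$-vector space of dimension $m - 1$ --- yields the candidate $i \colon g_* N \hookrightarrow f_* M$ with total cokernel $k$-dimension $(b(y_0) - 1) + (m - 1) = b(x_0) - 1$.

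The main step is to verify that $\Coker(i)$ is annihilated by $\mathfrak{m}_{x_0} \subset \calO_{\Xsing,x_0}$, so that it is a free $k(x_0) = k$-module of rank $b(x_0) - 1$. By \eqref{Eqn: RingInNormal}, every $r \in \mathfrak{m}_{x_0}$ lifts to $\calO_{\widetilde{X}, f^{-1}(x_0)}$ and vanishes at each point of $f^{-1}(x_0)$. Passing to the semilocalization at $f^{-1}(x_0)$, the identity $M_{\widetilde{x}} = \mathfrak{m}_{\widetilde{x}}^{-1}(h^* N)_{\widetilde{x}}$ for $\widetilde{x} \in D$ gives $r \cdot M \subset h^* N$; and because $D \cap h^{-1}(y_0) = \emptyset$, the product $r \cdot M$ vanishes identically on $h^{-1}(y_0)$, so takes equal values (namely $0$) across every branch at $y_0$. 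This places $r \cdot M$ inside $\calO_{\overline{Y}, \partial Y} = (g_* N)_{x_0}$, proving $\mathfrak{m}_{x_0} \cdot \Coker(i) = 0$.

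The main obstacle is this annihilation step, where the at-most-one-singularity hypothesis enters essentially: if $\overline{Y}$ had two singular points, any effective divisor $D$ of degree $m - 1$ on $f^{-1}(x_0)$ would necessarily meet the preimage of at least one singularity, and at its branches $r \cdot M$ would produce generically nonzero values differing across branches, violating the equality condition for membership in $\calO_{\overline{Y}, \partial Y}$.
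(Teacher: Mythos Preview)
Your proof is correct and constructs exactly the same presentation as the paper: both take $M = h^{*}N \otimes \calO_{\widetilde{X}}(D)$ (equivalently $h^{*}(N \otimes \calO_{\overline{Y}}(y_1+\cdots+y_n))$) and factor $i$ as the adjunction $g_{*}N \hookrightarrow f_{*}h^{*}N$ followed by the twist $f_{*}h^{*}N \hookrightarrow f_{*}M$. The only organizational difference is that the paper computes the cokernel of $N \to h_{*}M$ on $\overline{Y}$ stalk-by-stalk (so each piece is visibly a $k(y_i)$-module and the $k(x_0)$-module condition follows from $\mathfrak{m}_{x_0}\mapsto\prod\mathfrak{m}_{y_i}$), whereas you work directly on $\Xsing$ and verify $\mathfrak{m}_{x_0}$-annihilation by hand; both are fine. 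One small slip: the sentence ``This places $r\cdot M$ inside $\calO_{\overline{Y},\partial Y} = (g_{*}N)_{x_0}$'' identifies $(g_{*}N)_{x_0}$ with $\calO_{\overline{Y},\partial Y}$ rather than with $N_{\partial Y}$---this is harmless once you note (as the paper does) that one may trivialize $N$ on a neighborhood of $y_0$ before running the local argument.
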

\begin{proof}
 
Given $\overline{Y}$, factor the normalization map  $f \colon \widetilde{X} \to \Xsing$  as $\widetilde{X} \stackrel{h}{\longrightarrow} \overline{Y} \stackrel{g}{\longrightarrow} \Xsing$.  By the discussion at the end of Section~\ref{Section: FoldSingularities}, the curve $\overline{Y}$ can be constructed as the pushout of $\widetilde{X}$ and some subset $\partial Y$ as in Diagram~\eqref{Eqn: Ypush-out}.  Label the fiber $g^{-1}(x_0) = \{ y_0, y_1, \dots, y_n \}$ so that the points $y_1, \dots, y_n$ are not singularities.  

On $\overline{Y}$, we have the homomorphism $i_{\text{can}} \colon N \to g_{*} g^{*}N$ that is adjoint to the identity.  This homomomorphism is injective with cokernel equal to a $k(y_0)$-module of rank $b(y_0)-1$.  To see this, observe that $i_{\text{can}}$ is certainly an isomorphism away from $y_0$.  Thus the kernel of $i_{\text{can}}$ is supported on a proper subset of $\overline{Y}$, but this is only possible if the kernel is zero as $N$ is torsion-free, showing injectivity.  The cokernel is supported on $y_0$, so to compute it, we can pass to an open neighborhood of $y_0$ and hence assume $N$ is trivial.  When $N$ is trivial, the claim follows from the existence of the exact sequence \eqref{Eqn: RingInNormal}.  We now use $i_{\text{can}}$ as follows.

	Because the points $y_1, \dots, y_n$ are not singularities, the line bundle  $\calO_{\overline{Y}}(y_1+\dots+y_n)$ is  well-defined, and we set $j \colon N \to   N \otimes \calO_{\overline{Y}}(y_{1}+\dots+y_{n})$ equal to the natural inclusion and $M$ equal to $h^{*}(N \otimes \calO_{\overline{Y}}(y_{1}+\dots+y_{n}))$.  The homomorphism  
	\begin{equation} \label{Eqn: ConstructedPresentation}
		f_{*} h^{*}(j) \circ g_{*}(i_{\text{can}}) \colon g_{*} N \to f_{*}M
	\end{equation}
	is a presentation of $g_* N$.  To verify this, we need to show that the cokernel is a $k(x_0)$-module of rank $b(x_0)-1$. 
	
	We compute the cokernel using 
	\begin{equation} \label{Eqn: PushfowardToPresentation}
		h_{*} h^{*}(j) \circ i_{\text{can}} \colon N \to h_{*}M.
	\end{equation}
	The direct image of this homomorphism under $g_*$ is  \eqref{Eqn: ConstructedPresentation}.
	
	Temporarily set $Q$ equal to the cokernel of \eqref{Eqn: PushfowardToPresentation}.  The module $Q$ must be supported on $\{ y_{0}, \dots, y_{n} \}$ because \eqref{Eqn: PushfowardToPresentation} equals $i_{\text{can}}$ away from $y_1, \dots, y_n$.  For the same reason, the localization $Q_{y_{0}}$ is a $k(y_0)$-module of rank $b(y_0)-1$.  Away from $y_0$, the homomorphism \eqref{Eqn: PushfowardToPresentation} coincides with $j$, so the stalk of $Q$ at $y_i$ for $i=1, \dots, n$ is a rank $1$ module over $k(y_i)$.  Since $g_{*} Q$ is the cokernel of the homomorphism \eqref{Eqn: ConstructedPresentation}, we can conclude that this cokernel is a $k(x_0)$-module of rank
	\begin{align*}
		b(y_0)-1+1+\dots+1 	=& b(y_0)-1+n \\
						=& b(x_0)-1.
	\end{align*}

\end{proof}

Next we show that the only sheaves that admit a presentation are the sheaves appearing in the previous lemma.
\begin{lm} \label{Lemma: AdmitPres}
	Assume $k=\kbar$.  If $I$ is a rank $1$, torsion-free sheaf that admits a presentation, then there exists a proper birational morphism $g \colon \overline{Y} \to \Xsing$ out of a curve $\overline{Y}$ with at most one singularity and a line bundle $N$ on $\overline{Y}$ such that $I=g_{*} N$.  
\end{lm}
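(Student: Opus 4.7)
The approach is to perform the analysis locally at the singular point $x_0$ and then globalize.

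For the local step, I would first use a trivialization of $M$ near $f^{-1}(x_0) = \{\widetilde{x}_1, \dots, \widetilde{x}_n\}$ (with $n := b(x_0)$) to identify $I_{x_0}$ with an $\calO_{\Xsing, x_0}$-submodule of $A := \calO_{\widetilde{X}, f^{-1}(x_0)} \cong \prod_{i=1}^n \calO_{\widetilde{X}, \widetilde{x}_i}$ whose cokernel is a $k$-module of dimension $n - 1$. This cokernel is annihilated by $\mathfrak{m}_{\Xsing, x_0}$, and this maximal ideal coincides with the Jacobson radical $\mathfrak{m}_A$ of $A$ as a subset of $A$ by the exact sequence~\eqref{Eqn: RingInNormal}. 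Hence $\mathfrak{m}_A \subseteq I_{x_0}$, and $I_{x_0}/\mathfrak{m}_A$ is a $1$-dimensional subspace of $A/\mathfrak{m}_A \cong k^n$, spanned by some $v = (c_1, \dots, c_n)$. Set $S := \{i : c_i \neq 0\}$.

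Rescaling the chosen trivialization of $M$ by a suitable unit of $A$, I would arrange $v$ to be the characteristic vector of $S$. Direct inspection then shows $I_{x_0}$ equals the submodule of $A$ consisting of tuples whose residues modulo $\mathfrak{m}_A$ are constant along $S$ and zero off $S$. This submodule is the stalk at $x_0$ of $g_* N$, where $g \colon \overline{Y} \to \Xsing$ is the partial normalization (constructed as in Diagram~\eqref{Eqn: Ypush-out}) that glues the points $\{\widetilde{x}_i\}_{i \in S}$ to a single point $y_0$ and leaves the remaining preimages as smooth points $y_i$, and $N := \calO_{\overline{Y}}(-\sum_{i \notin S} y_i)$. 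By construction $\overline{Y}$ has at most one singularity, namely $y_0$ (present when $|S| \geq 2$).

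To globalize, I would use that $g$ is an isomorphism away from $g^{-1}(x_0)$, so $I|_{\Xsing \setminus \{x_0\}}$ pulls back to a line bundle on $\overline{Y} \setminus g^{-1}(x_0)$. Combined with the local description, which supplies the free rank-$1$ stalks at each point of $g^{-1}(x_0)$, this produces a line bundle $N$ on $\overline{Y}$ such that $g_* N$ agrees with $I$ on the dense open and on the stalk at $x_0$, and therefore globally.

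The main obstacle I anticipate is the step of bringing $I_{x_0}$ into canonical form: one must verify that the units of $A$ indeed act transitively on the generators of the line $k\cdot v$ with a prescribed support $S$, and that the resulting canonical submodule coincides with the stalk of the proposed $g_* N$ as a $g_*\calO_{\overline{Y}}$-module (so that the extension $N$ is genuinely locally free of rank $1$). Once these checks are in place, both the curve $\overline{Y}$ and the line bundle $N$ are forced by the data.
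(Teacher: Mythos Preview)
Your proposal is correct and follows essentially the same route as the paper's proof: reduce to the stalk at $x_0$, observe that $\mathfrak{m}_A \subset I_{x_0}$ so that $I_{x_0}/\mathfrak{m}_A$ is a line in $k^n$, read off the partial normalization $\overline{Y}$ from the support $S$ of a generator of that line, identify the stalk with the pushforward of a line bundle, and then globalize by extending the local isomorphism to a neighborhood and twisting.

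The only notable difference is in the normalization step. You rescale the trivialization of $M$ by a unit of $A$ to make the generator equal to the characteristic vector of $S$, and then your local model is $g_*\calO_{\overline{Y}}(-\sum_{i\notin S} y_i)$. The paper instead keeps the presentation fixed and uses Prime Avoidance to choose a lift $s_0$ of the generator with $s_0 \notin \mathfrak{p}_{\widetilde{x}}^2 M_{\widetilde{x}}$ for every $\widetilde{x}$; this makes $r \mapsto r\cdot s_0$ an isomorphism $\calO_{\overline{Y},g^{-1}(x_0)} \cong I_{x_0}$, so the local model is $g_*\calO_{\overline{Y}}$. These are the same computation in different coordinates (your unit absorbs the order-one vanishing of $s_0$ at the points outside $S$), and both feed into the identical globalization argument. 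Your version is arguably more transparent, while the paper's makes the $\calO_{\overline{Y}}$-module structure on $I_{x_0}$ more visibly intrinsic. The ``obstacle'' you flag (transitivity of the unit action on generators with fixed support) is immediate: take $u_i = c_i^{-1}$ for $i\in S$ and $u_i = 1$ otherwise.
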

\begin{proof}
	Let  $I$ be given.  We first show that there exists $g \colon \overline{Y} \to \Xsing$ and $N_0$ such that the stalks $I_{x_0}$ and $(g_{*}N_0)_{x_0}$ are isomorphic.  We then modify $N_0$ to produce a suitable $N$.
		
	We  construct $\overline{Y}$ and $N_0$ as follows.  Fix a presentation $i \colon I \to f_{*} M$ of $I$ with quotient $q \colon f_{*} M \to Q$ and consider the induced map $q_{x_0} \colon M_{f^{-1}(x_0)} = (f_{*} M)_{x_0} \to Q_{x_0}$ on stalks.  By definition, $i_{x_0}(I_{x_0})$ is the kernel of $q_{x_0}$, but we can also compute this kernel directly.  The kernel must contain the product $\mathfrak{p}_{x_0} \cdot M_{f^{-1}(x_0)}$ with the maximal ideal $\mathfrak{p}_{x_0} \subset \calO_{\Xsing, x_0}$ because $Q$ is a $k(x_0) = \calO_{X, x_0}/\mathfrak{p}_{x_0}$-module.  The induced homomorphism 
	\begin{equation} \label{Eqn: qOnFiber}
		\overline{q} \colon k(x_0) \otimes M_{f^{-1}(x_0)} = \bigoplus_{f(\widetilde{x})=x_0} (k(\widetilde{x}) \otimes M) \to k(x_0) \otimes Q.
	\end{equation}
	on fibers is a surjection from a rank $b(x_0)$ vector space to a rank $b(x_0)-1$ vector space, so $\overline{q}$ has a rank $1$ kernel.  Pick $s_0 \in M_{f^{-1}(x_0)}$ mapping to a generator of this kernel.  The element $s_0$ itself lies in $i_{x_0}(I_{x_0})$, and $i_{x_0}(I_{x_0})$ is generated by $s_0$ together with $\mathfrak{p}_{x_0} \cdot M_{f^{-1}(x_0)}$.  By using the Prime Avoidance Lemma to modify an arbitrary $s_0$, we can assume that $s_0$ satisfies  $s_0 \notin \mathfrak{p}_{\widetilde{x}}^{2} \cdot M_{\widetilde{x}}$ for all $\widetilde{x} \in f^{-1}(x_{0})$.

	We now use $s_0$ to construct $\overline{Y}$ and $N_0$.  Define $\partial Y$ be the scheme that is the union of copies of $\Spec(k)$ labeled by the points  $\widetilde{x} \in f^{-1}(x_0)$ with $s_0(\widetilde{x}) = 0$ plus one additional copy of $\Spec(k)$.  There is a natural morphism $\partial X \to \partial Y$ that sends a $\widetilde{x}$ satisfying $s_0(\widetilde{x}) = 0$ to the point labeled by $\widetilde{x}$ and sends every $\widetilde{x}$ with $s_0(\widetilde{x}) \ne 0$ to the additional $\Spec(k)$.  With this  $\partial X \to \partial Y$, we define $\overline{Y}$ by Diagram~\eqref{Eqn: Ypush-out}.  Let $f= \widetilde{X} \stackrel{h}{\longrightarrow} \overline{Y} \stackrel{g}{\longrightarrow} \Xsing$ be the natural factorization.  The authors claim that an isomorphism $\phi_{x_0} \colon (g_{*}\calO_{\overline{Y}})_{x_0} = \calO_{\overline{Y}, g^{-1}(x_0)} \cong i_{x_0}(I_{x_0})$ is given by  $\phi_{x_0}(r) = r \cdot s_0$.  This rule defines a homomorphism $\phi_{x_0} \colon \calO_{\overline{Y}, g^{-1}(x_0)} \to M_{f^{-1}(x_0)}$ that is injective, as it is a nonzero homomorphism out of a rank 1, torsion-free module. Now let us show that the image is contained in $i_{x_0}(I_{x_0})$.
	
	The element $\phi_{x_0}(1)=s_0$ certainly lies in $i_{x_0}(I_{x_0})$, but we cannot immediately conclude that $\phi_{x_0}(r) \in i_{x_0}(I_{x_0})$ for all $r$ because $I_{x_0}$ is not (a priori) a $\calO_{\overline{Y}, g^{-1}(y_0)}$-module.  However, a computation shows
	\begin{align*}
		q(r \cdot s_0) 	=& \sum_{f(\widetilde{x})=x_0} \overline{q}( r \cdot s_0(\widetilde{x})) \\
					=& \sum_{f(\widetilde{x})=x_0} \overline{q}( r(h(\widetilde{x})) s_0(\widetilde{x})) \\
					=& \sum_{\widetilde{x} \in \partial Y} \overline{q}( r(y_0) s_0(\widetilde{x})) \\
					=& r(y_0) q(s_0) \\
					=& 0,
	\end{align*}
	i.e.,~that $\phi_{x_0}(r)$ lies in the kernel of $q$.  
	
	To conclude that $\phi_{x_0}$ is an isomorphism, we need to show surjectivity.  Certainly the image of $\phi_{x_0}$ contains $s_0$, so we need to show that the image also contains $\mathfrak{p}_{x_0} \cdot M_{f^{-1}(x_0)}$.  A typical generator of $\mathfrak{p}_{x_0} \cdot M_{f^{-1}(x_0)}$ is $r \cdot  s$ with $r \in \mathfrak{p}_{x_0}$ and $s \in M_{f^{-1}(x_0)}$.  Since $M_{f^{-1}(x_0)}$ is generically free of rank $1$, we can certainly write $r  \cdot s = r_0 \cdot s_0$ for a unique \emph{rational} function $r_0$.  We show that $r_0 \in \calO_{\overline{Y}, g^{-1}(x_0)}$ by examining the stalk at a point $\widetilde{x} \in f^{-1}(x_0)$.  
	
	If $\widetilde{x} \in \partial Y$, then  $M_{\widetilde{x}}$ is freely generated by $s_0$.  Writing $s =r_1 \cdot s_0$ for $r_1 \in \calO_{X, \widetilde{x}}$, we see $r_0 = r \cdot r_1$, so $r_0 \in \mathfrak{p}_{\widetilde{x}} \subset \calO_{\widetilde{X}, \widetilde{x}}$.  If $\widetilde{x} \notin \partial Y$, then $s_0$ does not generate $M_{\widetilde{x}}$, but the section does generate $\mathfrak{p}_{\widetilde{x}} \cdot M_{\widetilde{x}}$, and similar reasoning shows $r_0 \in \calO_{\widetilde{X}, \widetilde{x}}$.  We can conclude that $r_0 \in \cap \calO_{\widetilde{X}, \widetilde{x}} =  \calO_{\widetilde{X}, f^{-1}(x_0)}$ and $r_0(\widetilde{x})=0$ for $\widetilde{x} \in \partial Y$ or equivalently $r_0 \in \calO_{\overline{Y}, g^{-1}(x_0)}$.  This completes the first part of the proof, that $I_{x_0}$ is isomorphic to $(g_{*} N_0)_{x_0}$ for $N_0 = \calO_{\overline{Y}}$.

	We now modify $N_0$ as in \cite[Lemma~3.1]{kass12}.  We can extend the isomorphism $\phi_{x_0} \colon I_{x_0} \cong  (g_{*}N_{0})_{x_0} $ to an isomorphism $\phi_1$ over some open neighborhood of $x_0$.  The complement of that neighborhood is contained in some open subset over which there exists an isomorphism $\phi_2$ between the restrictions of $I$ and $g_{*}N_0$.  If we define $L$ to be the line bundle given by gluing the trivial line bundle to the trivial line bundle using the  $\phi_{2}^{-1} \circ \phi_{1}$, then the isomorphisms $\phi_1$ and $\phi_2$ define an isomorphism $L \otimes g_{*} N_0 \cong I$.  The line bundle $N= g^{*}L \otimes N_0$ then satisfies the conditions of the lemma.
	
\end{proof}

\begin{lm} \label{Lemma: ComputeRank}
	Assume $k=\kbar$.  Let $N$ be a line bundle on $\Xsing$, $M$ a line bundle on $\widetilde{X}$, and $i_{x_0} \colon N_{x_0} \to (f_{*}M)_{x_0}$ an injection whose cokernel is a $k(x_0)$-module.  If $\Xsing \neq \widetilde{X}$, then $\operatorname{rank}_{k(x_0)} (f_{*}M)_{x_0}/i_{x_0}(N_{x_{0}})=b(x_0)-1$.
\end{lm}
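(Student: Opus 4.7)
The plan is to reduce the statement to an ideal-theoretic computation in local rings and then to invoke the hypothesis $\Xsing \neq \widetilde{X}$ in the final step via the nonregularity of $\calO_{\Xsing, x_0}$. Set $A := \calO_{\Xsing, x_0}$, $\mathfrak{m} := \mathfrak{p}_{x_0} \subset A$, and $B := \calO_{\widetilde{X}, f^{-1}(x_0)} \cong \prod_{\widetilde{x} \in f^{-1}(x_0)} \calO_{\widetilde{X}, \widetilde{x}}$, so that $N_{x_0}$ is free of rank one over $A$ and $(f_{*}M)_{x_0}$ is free of rank one over $B$. After choosing generators of these two modules, the injection $i_{x_0}$ becomes multiplication by a non-zero-divisor $s \in B$, and the cokernel is identified with $B/sA$ as an $A$-module.

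The first concrete step is to establish the identity $\mathfrak{m}B = \mathfrak{m}$ as $A$-submodules of $B$. The inclusion $\mathfrak{m} \subseteq \mathfrak{m}B$ is clear. For the reverse, every element of $\mathfrak{m}$ maps to zero in $k(x_0)$, which under \eqref{Eqn: RingInNormal} sits diagonally inside $\prod_{\widetilde{x}} k(\widetilde{x})$; since $k = \kbar$ forces $k(\widetilde{x}) = k$ for each $\widetilde{x}$, this places $\mathfrak{m}$ inside the Jacobson radical $J = \prod_{\widetilde{x}} \mathfrak{p}_{\widetilde{x}}$ of $B$, so $\mathfrak{m}B \subseteq J$. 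Conversely, any element of $J$ maps to $0$ in $\prod_{\widetilde{x}} k(\widetilde{x})$ and therefore lies in $A$ by \eqref{Eqn: RingInNormal}, and it then lies in $\mathfrak{m}$ because it reduces to $0$ in $k(x_0)$. The hypothesis that $B/sA$ is annihilated by $\mathfrak{m}$ translates into $\mathfrak{m}B \subseteq sA$, which by the identity becomes $\mathfrak{m} \subseteq sA$; the filtration $\mathfrak{m} \subseteq sA \subseteq B$ then yields
\[
	\dim_{k}(B/sA) \;=\; \dim_{k}(B/\mathfrak{m}) - \dim_{k}(sA/\mathfrak{m}) \;=\; b(x_{0}) - \dim_{k}(sA/\mathfrak{m}),
\]
using $B/\mathfrak{m} = B/\mathfrak{m}B = \prod_{\widetilde{x}} k(\widetilde{x})$.

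It remains to show $\dim_{k}(sA/\mathfrak{m}) = 1$. Since $\mathfrak{m}\cdot s \subseteq \mathfrak{m}B = \mathfrak{m}$, the $A$-module $sA/\mathfrak{m}$ is cyclic with generator the class of $s$ and is annihilated by $\mathfrak{m}$, so it is either zero or one-dimensional over $k$. The principal --- and in fact only --- obstacle is excluding the degenerate possibility $sA = \mathfrak{m}$; this is where the hypothesis $\Xsing \neq \widetilde{X}$ enters. If $\mathfrak{m}$ were principal as an $A$-ideal, then the one-dimensional Noetherian local ring $A$ would be a discrete valuation ring, in particular regular, so $\Xsing$ would be smooth at $x_{0}$. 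Since $x_{0}$ is the unique singularity of $\Xsing$, this would force $\Xsing = \widetilde{X}$, contradicting the hypothesis. Hence $sA/\mathfrak{m}$ is one-dimensional over $k$ and $\dim_{k}(B/sA) = b(x_{0}) - 1$, which is the claim.
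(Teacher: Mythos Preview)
Your proof is correct and takes a genuinely different route from the paper's. The paper argues by contradiction that the element $s$ (their $s_0$) is a \emph{unit} in $B$: assuming $s_0(\widetilde{x}_0)=0$ for some $\widetilde{x}_0 \in f^{-1}(x_0)$, they choose a uniformizer $r_0$ at $\widetilde{x}_0$ that is nonvanishing at the other preimages and show that the class of $r_0^{-1}s_0$ in the cokernel is not killed by a suitable $t \in \mathfrak{m}$; the hypothesis $\Xsing \neq \widetilde{X}$ enters because this step requires a second point $\widetilde{x}\neq\widetilde{x}_0$ in $f^{-1}(x_0)$ at which to evaluate. Once $s_0$ is a unit, the cokernel is $B/A$ and the rank is read off from \eqref{Eqn: RingInNormal}. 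Your approach never asks whether $s$ is a unit: the identity $\mathfrak{m}B=\mathfrak{m}$ (equivalently $\mathfrak{m}=J$) converts the annihilation hypothesis into the filtration $\mathfrak{m}\subseteq sA\subseteq B$ and reduces everything to showing $\dim_k(sA/\mathfrak{m})=1$, with the hypothesis entering as the nonregularity of the one-dimensional local domain $A$. Your argument is more structural and a bit shorter; the paper's argument establishes the stronger intermediate fact that $s$ is a unit in $B$, though that extra information is not used elsewhere.
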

\begin{proof}
	We can pass to the local ring and hence replace $N$ with $\calO_{\Xsing, x_0}$ and $M$ with $\calO_{\widetilde{X}, f^{-1}(x_0)}$.  The homomorphism $i$ is then given by $r \mapsto f^{*}(r) \cdot s_0$ for some $s_0 \in \calO_{\widetilde{X}, f^{-1}(x_0)}$.  The authors claim that $s_0$ is a unit.
	
	We assume $s_0$ is not a unit and then derive a contradiction by constructing an element of $\Coker(i)$ not killed by the maximal ideal $\mathfrak{p}_{x_0}$ of $x_0$.  If not a unit, $s_{0}(\widetilde{x}_{0})=0$ for some $\widetilde{x}_{0} \in f^{-1}(x_0)$.  By the Prime Avoidance Lemma, we can pick a generator  $r_0$ of $\mathfrak{p}_{\widetilde{x}_{0}}$ with the property that $r_0(\widetilde{x}) \ne 0$ for $\widetilde{x} \in f^{-1}(x_0)$, $\widetilde{x} \ne \widetilde{x}_{0}$.  The element $s := r^{-1}_{0} \cdot s_0$ lies in $\calO_{\widetilde{X}, f^{-1}(x_0)}$, and its image in $\Coker(i)$ is not killed by any $t \in \mathfrak{p}_{x_0}$ satisfying $f^{*}(t) \notin \mathfrak{p}_{\widetilde{x}_0}^{2}$.  Indeed, if $t$ kills $s$, then $f^{*}(t_{0}) = r_{0}^{-1} \cdot f^{*}(t)$ for some $t_{0} \in \calO_{\Xsing, x_0}$.  Evaluating this equation at any  $\widetilde{x} \ne \widetilde{x}_0$, we see that $t_{0}(x_0)=0$, and by comparing orders of vanishing at $\widetilde{x}_{0}$, we see that this is only possible if $f^{*}(t) \in \mathfrak{p}_{\widetilde{x}_{0}}^{2}$.   This shows that $\Coker(i)$ is not a $k(x_0)$-module, so we have derived the desired contradiction.  We can conclude that $s_0$ is a unit, and so $\Coker(i)=\calO_{\widetilde{X}, f^{-1}(x_0)}/f^{*}(\calO_{\Xsing, x_0})$ visibly has $k(x_0)$-rank $b(x_0)-1$.  
\end{proof}

We now classify the presentations $i \colon I \to f_{*} M$ with $I$ and $M$ fixed.

\begin{lm}
	Assume $k=\kbar$.  Up to equivalence, a rank $1$, torsion-free sheaf $I$ admits at most one presentation unless $I=f_{*}N$ for $N$ a line bundle on $\widetilde{X}$.  When $I=f_{*}N$, there are exactly $b(x_0)$ inequivalent presentations: the associated presentations from Definition~\ref{Def: AssoPresentation}.
\end{lm}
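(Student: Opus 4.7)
The plan is to convert each presentation of $I$ into a section of a line bundle on $\widetilde{X}$ via adjunction, and argue that the divisor of this section is dictated by the intrinsic partial normalization attached to $I$.

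By Lemma~\ref{Lemma: AdmitPres}, write $I = g_{*} N_0$ for the unique partial normalization $g \colon \overline{Y} \to \Xsing$ through which $I$ extends to a line bundle, and set $f = g \circ h$ with $h \colon \widetilde{X} \to \overline{Y}$. Label $g^{-1}(x_0) = \{y_0, y_1, \ldots, y_n\}$ so that $y_1, \ldots, y_n$ are smooth points of $\overline{Y}$, and write $\widetilde{y}_j := h^{-1}(y_j) \in \widetilde{X}$ for $j \geq 1$. Given a presentation $i \colon I \to f_{*} M$, the adjoint $\tilde{\imath} \colon f^{*} I \to M$ factors through the torsion-free quotient of $f^{*} I$; a local idempotent calculation at $x_0$ identifies this quotient with $h^{*} N_0$, packaging the presentation as a section $s \in H^{0}(\widetilde{X}, M \otimes h^{*} N_0^{-1})$.

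Next I would pin down $\operatorname{div}(s)$. Because $i$ is an isomorphism on $\Xsing \setminus \{x_0\}$, the section $s$ is nowhere vanishing away from $f^{-1}(x_0)$. At each $\widetilde{x} \in f^{-1}(x_0)$ the order of $s$ equals the order at $\widetilde{x}$ of the element $s_0 \in M_{f^{-1}(x_0)}$ built in the proof of Lemma~\ref{Lemma: AdmitPres}, which is normalized so that $s_0 \notin \mathfrak{p}_{\widetilde{x}}^{2} M_{\widetilde{x}}$ and (by the construction of $\overline{Y}$ there) vanishes exactly at $\widetilde{y}_1, \ldots, \widetilde{y}_n$. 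Hence $\operatorname{div}(s) = \widetilde{y}_1 + \cdots + \widetilde{y}_n$, which forces $M \cong h^{*}(N_0 \otimes \mathcal{O}_{\overline{Y}}(y_1 + \cdots + y_n))$, and $s$ is determined up to a $k^{\times}$-scalar since $H^{0}$ of a line bundle at a prescribed effective divisor is one-dimensional. The two cases of the lemma now separate by whether the labeling $\{y_0, y_1, \ldots, y_n\}$ is intrinsic to $I$: when $\overline{Y} \neq \widetilde{X}$ the singular point $y_0$ is intrinsic to $\overline{Y}$, so the divisor of $s$ is forced and at most one equivalence class of presentations exists, whereas when $\overline{Y} = \widetilde{X}$ (the case $I = f_{*} N$) every point of $g^{-1}(x_0) = f^{-1}(x_0)$ is smooth and the role of $y_0$ is a free choice among the $b(x_0)$ points, yielding the $b(x_0)$ associated presentations of Definition~\ref{Def: AssoPresentation}.

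To verify that these $b(x_0)$ associated presentations on $I = f_{*}N$ are pairwise inequivalent, note that any equivalence between those indexed by $\widetilde{y}$ and $\widetilde{y}'$ furnishes an isomorphism $\beta \colon f_{*} M_{\widetilde{y}} \to f_{*} M_{\widetilde{y}'}$ on $\Xsing$, which via the identification $\sHom_{\Xsing}(f_{*} M_1, f_{*} M_2) \cong f_{*} \sHom_{\widetilde{X}}(M_1, M_2)$ must arise from an isomorphism $\phi \colon M_{\widetilde{y}} \to M_{\widetilde{y}'}$ of line bundles on $\widetilde{X}$. Comparing divisors, $\phi \circ s_{\widetilde{y}}$ and $s_{\widetilde{y}'}$ are sections of $M_{\widetilde{y}'} \otimes N^{-1}$ with effective divisors $f^{-1}(x_0) - \widetilde{y}$ and $f^{-1}(x_0) - \widetilde{y}'$ respectively, so proportionality forces $\widetilde{y} = \widetilde{y}'$. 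I expect the main technical obstacle to be this sheaf-$\sHom$ identification: a seminormality statement for the ordinary fold singularity whose proof parallels the calculation that every $\mathcal{O}_{\Xsing, x_0}$-linear endomorphism of $\mathcal{O}_{\widetilde{X}, f^{-1}(x_0)}$ is given by multiplication by an element of $\mathcal{O}_{\widetilde{X}, f^{-1}(x_0)}$, carried out via the explicit ring description in Equation~\eqref{Eqn: RingInNormal}.
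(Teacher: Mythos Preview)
Your approach is correct and closely parallel to the paper's, but the technical execution of the key step differs. Both arguments reduce to analyzing the adjoint map $h^{*}N_0 \to M$ (equivalently, the section $s$ of $M\otimes(h^{*}N_0)^{-1}$). The paper gets there by invoking \cite[Proposition~3]{altman90} to factor $i = g_{*}i'$ and then decomposing $i' = h_{*}(\operatorname{ad}(i'))\circ i'_{\text{can}}$; it then pins down $\operatorname{div}(s)$ by a direct rank count: Lemma~\ref{Lemma: ComputeRank} forces the cokernel of $i'$ at the singular point $y_0$ to have rank exactly $b(y_0)-1$, the ranks at the smooth $y_j$ are at most $1$, and comparing with the total $b(x_0)-1$ makes every inequality an equality. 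This yields $\operatorname{div}(s)=\widetilde{y}_1+\cdots+\widetilde{y}_n$ without appealing back to Lemma~\ref{Lemma: AdmitPres}.

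Your route instead re-runs the construction of Lemma~\ref{Lemma: AdmitPres} on the \emph{given} presentation $i$ to produce an $s_0$, and then uses that the $\overline{Y}$ built there coincides with the $\overline{Y}$ you fixed at the outset. That step rests on your assertion that $\overline{Y}$ is the \emph{unique} partial normalization through which $I$ extends to a line bundle. This is true (it is determined by the $\calO_{\Xsing,x_0}$-algebra $\operatorname{End}(I_{x_0})$), but it is not stated or proved anywhere in the paper, so you should supply that argument rather than assert it. Once uniqueness is in hand your divisor computation goes through, and indeed your identification of $\operatorname{ord}_{\widetilde{x}}(s)$ with $\operatorname{ord}_{\widetilde{x}}(s_0)$ is the stalkwise statement that the image of the adjoint map at $\widetilde{x}$ is $s_0\cdot\calO_{\widetilde{X},\widetilde{x}}+\mathfrak{p}_{\widetilde{x}}M_{\widetilde{x}}$.

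Your inequivalence argument for the $b(x_0)$ associated presentations is actually more explicit than the paper's treatment. The $\sHom$-identification you flag as the obstacle is exactly \cite[Proposition~3]{altman90} (any $\calO_{\Xsing}$-linear map $f_{*}M_1\to f_{*}M_2$ is $f_{*}$ of an $\calO_{\widetilde{X}}$-linear map), which the paper already cites; once that is granted, your divisor comparison $f^{-1}(x_0)-\widetilde{y}=f^{-1}(x_0)-\widetilde{y}'$ finishes it cleanly.
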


\begin{proof}
	By the Lemma~\ref{Lemma: AdmitPres}, we can assume $I=g_{*}N$ for some line bundle $N$ on a  curve $\overline{Y}$ with at most one singularity.  Factor  $f$ as $\widetilde{X} \stackrel{h}{\to} \overline{Y}  \stackrel{g}{\to} \Xsing$ and label the fiber $g^{-1}(x_0) = \{ y_0, \dots, y_n \}$ so that the points $y_1, \dots, y_n$ are not singularities.  
	
	Suppose that $i$ is a given presentation.  We can write $i$ as  $i=g_{*}i'$ for $i' \colon N \to h_{*}M$ by \cite[Proposition~3]{altman90}.  If $\operatorname{ad}(i') \colon h^{*}N \to M$ is the adjoint to $i'$ and $i'_{\text{can}} \colon h^{*} h_{*}N \to N$ is the adjoint to the identity $h^{*}N \to h^{*}N$, then we have
	$$
		i' = h_{*}\operatorname{ad}(i') \circ i'_{\text{can}}.
	$$
	We first consider the case where $\overline{Y} \ne \widetilde{X}$, the case where we need to prove that $i$ is equivalent to the presentation constructed in Lemma~\ref{Lemma: ClassifyOne}.
	
	Consider the cokernel $Q'$ of $i'$.  The direct image $g_{*}Q'$ is the cokernel of $i$, so since $g_{*}Q'$ is a $k(x_0)$-module of rank $b(x_0)-1$, $Q'$ is a module over $k(y_0) \oplus k(y_1) \oplus \dots \oplus k(y_n)$ and the sum $\sum \operatorname{rank}_{k(y_i)} Q'_{y_i}$ is $b(x_0)-1$.  We have $\operatorname{rank}_{k(y_0)} Q'_{y_0} = b(y_0)-1$ by Lemma~\ref{Lemma: ComputeRank} and for $i \ne 0$, the $k(y_i)$-rank of $Q'_{y_i}$ is at most $1$ as this module is a quotient of the rank $1$ module $(h_{*}M)_{y_i}$.  Combining these inequalities, we get 
	\begin{align*}
		b(x_0)-1 =& \operatorname{rank}_{k(y_0)}Q_{y_0} + \dots + \operatorname{rank}_{k(y_n)}Q_{y_n} \\
			\le& b(y_0)-1 + 1 + \dots + 1 \\
			=& b(y_0)-1+n \\
			=& b(x_0)-1,
	\end{align*}
	so all the inequalities must be equalities.  In other words, $Q_{y_i}$ is a rank $1$ module over $k(x_i)$ for all $i \ne 0$, and $Q_{y_0}$ has rank $b(y_0)-1$.
	
	 The cokernel of $(i'_{\text{can}})_{y_0} \colon N_{y_0} \to (h_{*}h^{*}N)_{y_0}$ is a $k(y_0)$-module of rank $b(y_0)-1$ (this was a computation in Lemma~\ref{Lemma: ClassifyOne}), so $h_{*}(\operatorname{ad}(i'))_{y_0}$ must be an isomorphism.  Similar reasoning shows that $h_{*}(\operatorname{ad}(i'))_{y_i}$ has a rank $1$ kernel.  In other words,
	$$
		\operatorname{ad}(i') \colon h^{*}N \to M
	$$
	has cokernel isomorphic to $k(\widetilde{y}_{1}) \oplus \dots \oplus  k(\widetilde{y}_{n})$.  Equivalently $\operatorname{ad}(i')$ factors as 
	$$
		h^{*}N \to \calO_{\widetilde{X}}( \widetilde{y}_{1} + \dots + \widetilde{y}_{n}) \otimes h^{*}N \to M
	$$
	with $\calO_{\widetilde{X}}( \widetilde{y}_{1} + \dots + \widetilde{y}_{n}) \otimes h^{*}N \to M$ an isomorphism (by degree considerations).  This isomorphism defines an equivalence between the given presentation and the presentation from Lemma~\ref{Lemma: ClassifyOne}.
	
	To complete the proof, we must consider the case $\overline{Y}=\widetilde{X}$.  Most of the argument given in the previous case remains valid except that the rank of $k(y_{0}) \otimes M_{y_{0}}$ is $b(y_0)=1$, not $b(y_{0})-1=0$.  Thus we can only conclude that from the fact that $b(x_0)-1 = \sum \operatorname{rank}_{k(y_{i})} Q_{y_{i}}$ that the rank of $Q_{y_{i}}$ is $1$ for all but exactly one $i=i_0$, in which case $\operatorname{rank}_{k(y_{i_{0}})} Q_{y_{i_{0}}}=0$. The rest of the argument shows that the given presentation $(i,M)$ is equivalent to the presentation associated to $M$ and $y_{i_0}$.
	\end{proof}

The following proposition summarizes the past four lemmas.

\begin{pr} \label{Prop: ClassificationOfJacbar}
	Assume $k=\kbar$.  Then a rank $1$, torsion-free sheaf $I$ on $\Xsing$ admits a presentation $i \colon I \to f_* M$ if and only there exists a curve $\overline{Y}$ with at most one singularity, a proper birational morphism $g \colon \overline{Y} \to \Xsing$, and a line bundle $N$ on $\overline{Y}$ such that $I=g_{*}N$.
	
	Furthermore, the presentation is unique unless $\overline{Y}=\widetilde{X}$ is the normalization, in which case every presentation is isomorphic to the presentation associated to some $(M, \overline{y})$.  In particular, there are exactly $b(x_0)$ different presentations of $I$. 
\end{pr}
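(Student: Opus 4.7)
My plan is to assemble the proposition directly from the four preceding lemmas, since each piece of the statement corresponds to one of them. I would start by observing that the first assertion is a two-sided implication whose two directions are exactly Lemma~\ref{Lemma: ClassifyOne} and Lemma~\ref{Lemma: AdmitPres}. For the ``if'' direction, given $I = g_* N$ with $g \colon \overline{Y} \to \Xsing$ as in the statement, Lemma~\ref{Lemma: ClassifyOne} produces an explicit presentation constructed from the adjunction homomorphism $N \to g_* g^* N$ composed with the inclusion $j \colon N \to N \otimes \calO_{\overline{Y}}(y_1 + \dots + y_n)$ where $y_1, \dots, y_n$ are the smooth points of $g^{-1}(x_0)$. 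Conversely, for the ``only if'' direction, Lemma~\ref{Lemma: AdmitPres} starts from a presentation $i \colon I \to f_* M$, picks an element $s_0$ generating the kernel of the homomorphism $\overline{q}$ on fibers, and uses $s_0$ to define a partition of $\partial X$ which in turn produces the required $\overline{Y}$ (via the pushout construction of Diagram~\eqref{Eqn: Ypush-out}) and line bundle $N$ on $\overline{Y}$.

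For the uniqueness statement, I would invoke the unlabeled lemma immediately preceding the proposition, which classifies presentations of a fixed rank $1$, torsion-free sheaf $I$ up to equivalence. That lemma asserts that if $I$ is not of the form $f_* N$ for a line bundle $N$ on $\widetilde{X}$, then the presentation is unique up to equivalence, and that if $I = f_* N$, then the inequivalent presentations are precisely the $b(x_0)$ associated presentations of Definition~\ref{Def: AssoPresentation} indexed by the points $\overline{y} \in f^{-1}(x_0)$. Translating via the first part of the proposition, $I$ is of the form $f_* N$ precisely when the $\overline{Y}$ produced by Lemma~\ref{Lemma: AdmitPres} is $\widetilde{X}$ itself, which matches the dichotomy in the proposition.

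The only real work remaining is bookkeeping: one needs to verify that the $\overline{Y}$ furnished by Lemma~\ref{Lemma: AdmitPres} has at most one singularity (which is automatic from the pushout construction in Diagram~\eqref{Eqn: Ypush-out}, since $\widetilde{X}$ is smooth and only the point over $x_0$ can be singular), that $g$ is proper and birational (which follows from Section~\ref{Section: FoldSingularities}), and that the two uses of the preliminary lemmas match the equivalence relation on presentations. Since Lemma~\ref{Lemma: ComputeRank} is used inside the proof of the uniqueness lemma to pin down the rank of the cokernel at $y_0$, I do not need to invoke it separately here.

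The main obstacle is conceptual rather than technical: clearly phrasing the correspondence between the partition-of-$\partial X$ data that appears in Lemma~\ref{Lemma: AdmitPres} and the intermediate curve $\overline{Y}$ so that the ``if and only if'' reads cleanly. Once that correspondence is made explicit, the proof of the proposition reduces to a short paragraph citing the four lemmas in sequence.
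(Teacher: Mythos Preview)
Your proposal is correct and matches the paper's approach exactly: the paper introduces the proposition with the sentence ``The following proposition summarizes the past four lemmas'' and gives no separate proof, so assembling the two directions of the equivalence from Lemma~\ref{Lemma: ClassifyOne} and Lemma~\ref{Lemma: AdmitPres} and the uniqueness claim from the unlabeled classification lemma (with Lemma~\ref{Lemma: ComputeRank} used only inside that lemma) is precisely what is intended.
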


Using this proposition, we can immediately construct rank $1$, torsion-free sheaves that are not  limits of line bundles.  For example:

\begin{cor} \label{Cor: NotLimit}
	Assume $\operatorname{char}(k)>3$.  Define $\Xsing$ by the Pushout Diagram \eqref{Diagram: Curvepush-out} with $\widetilde{X} :=\bbP^{1}$ and $\{ \pm1, \pm2 \} := \partial X$.  Define $\overline{Y}$ by the Pushout Diagram \eqref{Eqn: Ypush-out} with  $\partial Y := \{ \pm1 \} \cup \{ \pm2 \}$ and $\partial X \to \partial Y$ the morphism $\pm 1 \mapsto 1$, $\pm 2 \mapsto 2$.  If $g \colon \overline{Y} \to \Xsing$ is the natural morphism, then
	$$
		I = g_{*} \calO_{\overline{Y}}
	$$
	is not the limit of line bundles.
\end{cor}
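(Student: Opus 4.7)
The plan is to apply Proposition~\ref{Prop: ClassificationOfJacbar}. Since $\Jacbar^{-1} \Xsing$ is by construction the set-theoretic image of $\PtoJbar \colon \Pres \Xsing \to \Jacbar^{-1} \Xsing$, a rank $1$, torsion-free sheaf is a limit of line bundles if and only if it admits a presentation, and by the proposition this happens precisely when the sheaf is of the form $(g')_* N$ for some proper birational morphism $g' \colon \overline{Y}' \to \Xsing$ with $\overline{Y}'$ having at most one singularity and some line bundle $N$ on $\overline{Y}'$. My goal is to rule out the existence of such a pair $(\overline{Y}', N)$ for the specific sheaf $I = g_* \calO_{\overline{Y}}$ of the corollary.

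The key step is to attach to any rank $1$, torsion-free sheaf $I$ an intrinsic ``ring of multipliers'' that, when $I = (g')_* N$ with $N$ a line bundle, recovers the intermediate curve $\overline{Y}'$. Writing $A := \calO_{\Xsing, x_0}$ and $K$ for its total quotient ring, set $R(I) := \{ a \in K : a \cdot I_{x_0} \subseteq I_{x_0} \}$. If $I = (g')_* N$ then $N$ trivializes on the semilocalization of $\overline{Y}'$ at $(g')^{-1}(x_0)$ (line bundles on semilocal schemes are trivial), so $I_{x_0} \cong B' := \calO_{\overline{Y}', (g')^{-1}(x_0)}$ as $A$-modules. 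The tautological identity $\{a \in K : a B' \subseteq B'\} = B'$ (any such $a$ equals $a \cdot 1 \in B'$) then yields $R(I) = B'$ as a subring of $K$.

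Applied to $I = g_* \calO_{\overline{Y}}$ the same computation gives
\[R(I) = \calO_{\overline{Y}, g^{-1}(x_0)} = \{ r \in \calO_{\widetilde{X}, f^{-1}(x_0)} : r(1) = r(-1),\; r(2) = r(-2) \}.\]
Since $R(I)$ depends only on $I$, the two descriptions must agree as subrings of $K$. Because intermediate rings between $\calO_{\Xsing, x_0}$ and $\calO_{\widetilde{X}, f^{-1}(x_0)}$ are in bijection with partitions of $\bound$ (equivalently with partial normalizations of $\Xsing$), this forces $\overline{Y}' \cong \overline{Y}$ as $\Xsing$-schemes, and in particular $\overline{Y}'$ must have two singularities, contradicting the hypothesis. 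Hence $I$ admits no presentation and is not a limit of line bundles.

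The main obstacle is justifying cleanly the intrinsic identification $R(I) = B'$ together with the bijection between intermediate rings and partitions of $\bound$; both can be verified by direct inspection, using the explicit description of $\calO_{\Xsing, x_0}$ from the exact sequence~\eqref{Eqn: RingInNormal} and the analogous exact sequences for the partial normalizations discussed at the end of Section~\ref{Section: FoldSingularities}.
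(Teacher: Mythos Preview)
Your argument is correct and follows the same overall strategy as the paper: both reduce to Proposition~\ref{Prop: ClassificationOfJacbar}, which says a rank $1$, torsion-free sheaf lies in the closure of the line bundle locus precisely when it is $(g')_*N$ for some line bundle $N$ on a curve $\overline{Y}'$ with at most one singularity. The paper's proof is a single sentence citing that proposition and leaves implicit the verification that $g_*\calO_{\overline{Y}}$ cannot be rewritten in this form; you make this step explicit via the ring of multipliers $R(I)$, which recovers the semilocal ring of $\overline{Y}'$ intrinsically from $I$. This is a clean and standard device, and your justification that intermediate subrings of $\calO_{\widetilde{X},f^{-1}(x_0)}$ containing $\calO_{\Xsing,x_0}$ correspond to partitions of $\partial X$ is exactly the content of the end of Section~\ref{Section: FoldSingularities} (and holds because here the conductor equals $\mathfrak m_{x_0}$, so intermediate rings are determined by $k$-subalgebras of $\prod k(\widetilde{x})$). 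So your proof is essentially the paper's proof with the missing step filled in; one could alternatively extract the same uniqueness from the construction in the proof of Lemma~\ref{Lemma: AdmitPres}.

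One minor point worth tightening: you invoke the surjectivity of $\PtoJbar$ onto $\Jacbar^{-1}\Xsing$, but $g_*\calO_{\overline{Y}}$ does not have degree $-1$. This is harmless—twisting by a fixed line bundle of the appropriate degree carries both ``limit of line bundles'' and ``admits a presentation'' from one degree to another (the paper's proof handles this by tensoring with a chosen $L_0$). Your ring-of-multipliers computation is insensitive to such a twist, so the conclusion is unaffected.
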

\begin{proof}
	If we fix a degree $1$ line bundle $L_0$ on $\Xsing$, then the map $\Pres \Xsing \to \Jacbar^{0} \Xsing $ that sends $i \colon I \to f_{*}M$ to $I \otimes L_0$ is surjective, and the image does not contain $[g_{*}\calO_{\overline{Y}}]$ by Proposition~\ref{Prop: ClassificationOfJacbar}.
\end{proof}

We now use the result just proven to describe the structure of $\Jacbar \Xsing $.  

\begin{df} \label{Def: Sections}
	Define the \textbf{canonical embedding into the presentation scheme} $$\widehat{\epsilon} \colon \Jac^{0} \widetilde{X} \times f^{-1}(x_0) \to \Pres \Xsing$$ by the rule that sends $(N, \overline{y}) \in (\Jac^{0} \widetilde{X})(T) \times f^{-1}(x_0)(T)$ to the associated presentation of $(N \otimes \mathcal{O}_{\widetilde{X}_{T}}(-\bound_{T}), \overline{y})$.
	
	The \textbf{canonical embedding into the compactified Picard scheme} $$\epsilon \colon \Jac^{0} \widetilde{X} \to \Jacbar^{-1} \Xsing$$ is defined by the rule $L \mapsto f_{*} (L \otimes \mathcal{O}_{\widetilde{X}_{T}}(-\bound_{T}))$. 
\end{df} 
The canonical embedding into the compactified Picard scheme is a closed embedding by \cite[Section~5, p.~19]{altman90} (which deduces this property by using \cite[Proposition~3]{altman90} to assert that $\epsilon$ is a monomorphism), and the same argument shows that $\widehat{\epsilon}$ is a closed embedding.

Proposition~\ref{Prop: ClassificationOfJacbar} allows us to describe $\Jacbar^{-1} \Xsing$ up to universal homeomorphism as follows. By \cite[Theorem~5.4]{ferrand03}, we may define $\overline{P}^{\natural}$ by the pushout diagram 
\begin{equation}\label{defineJnat}
	\begin{CD}
		f^{-1}(x_0) \times \Jac^{0} \widetilde{X}		@>\widehat{\epsilon}>>		\Pres \Xsing \\
		@VVV									@VVV \\
		\Jac^{0} \widetilde{X} 					@>>>		\overline{P}^{\natural}
	\end{CD}
\end{equation} 
where $f^{-1}(x_0) \times \Jac^{0} \widetilde{X} \to \Jac^{0} \widetilde{X}$ is the projection morphism. Because the diagram 
$$
	\begin{CD}
		f^{-1}(x_0) \times \Jac^{0} \widetilde{X}		@>\widehat{\epsilon}>>		\Pres \Xsing \\
		@VVV									@V\PtoJbar VV \\
		\Jac^{0} \widetilde{X} 					@>\epsilon>>		\Jacbar^{-1} \Xsing
	\end{CD}
	$$
	commutes, the universal property of the pushout defines a map $\overline{P}^{\natural} \to\Jacbar^{-1} \Xsing$.

\begin{tm} \label{Proposition: UnivesalHomeo}
The map $\overline{P}^{\natural} \to\Jacbar^{-1} \Xsing$ is a universal homeomorphism.
\end{tm}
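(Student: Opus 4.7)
The plan is to verify that $\overline{P}^{\natural} \to \Jacbar^{-1}\Xsing$ satisfies the three conditions characterizing a universal homeomorphism: finiteness, surjectivity, and radicialness (universal injectivity). I will factor the analysis through the presentation scheme, using the results already established for $\PtoJbar$.

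First, I would show that $\PtoJbar \colon \Pres \Xsing \to \Jacbar^{-1}\Xsing$ is finite and surjective. Finiteness was already noted in the proof of Proposition~\ref{Prop: PresExists} (the morphism $\Pres \Xsing \to \Jac^{-1}\widetilde{X} \times \Jacbar^{-1}\Xsing$ is finite, so its second projection is as well). Surjectivity follows because $\Pres \Xsing$ is geometrically irreducible, being a Grassmannian bundle over $\Jac^{-1}\widetilde{X}$ by Lemma~\ref{Lemma: PresExists}, so its image is an irreducible closed subset of $\Jacbar^{-1}\Xsing$; this image contains the line bundle locus because every line bundle $L$ admits the canonical presentation $i_{\text{can}}$ of \eqref{Eqn: MotivatePres}, and the line bundle locus is dense by definition of $\Jacbar^{-1}\Xsing$. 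The induced map $\overline{P}^{\natural} \to \Jacbar^{-1}\Xsing$ is therefore surjective.

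Next, I would analyze the geometric fibers. After extending scalars to $\kbar$ (which is harmless since both the Ferrand pushout and $\Jacbar^{-1}\Xsing$ commute with field extensions), Proposition~\ref{Prop: ClassificationOfJacbar} describes the fibers of $\PtoJbar$ completely: over $[I]$ not of the form $f_*N$, there is a unique presentation; over $[f_*N]$, there are exactly $b(x_0)$ presentations, namely the associated presentations $\widehat{\epsilon}(N\otimes \calO_{\widetilde{X}}(\bound), \overline{y})$ for $\overline{y} \in f^{-1}(x_0)$. Since Diagram~\eqref{defineJnat} glues the $b(x_0)$ sections of $\widehat{\epsilon}$ above any $N \in \Jac^0\widetilde{X}$ into a single point of $\overline{P}^{\natural}$, the induced map $\overline{P}^{\natural} \to \Jacbar^{-1}\Xsing$ is bijective on $\kbar$-points.

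Finally, to conclude, I would use \cite[Proposition~5.6]{ferrand03} to argue that $\overline{P}^{\natural}$ is projective (being a Ferrand pushout of projective schemes along a closed immersion and a finite morphism), so the map is proper. Bijectivity on geometric points combined with properness yields finiteness, and radicialness follows once one checks that the residue field extensions at the collapsed points are trivial. The main obstacle will be this last residue field check: one must compare the local ring of $\Jacbar^{-1}\Xsing$ at $[f_*N]$ with Ferrand's explicit pullback-of-rings description of the local ring of $\overline{P}^{\natural}$, in order to confirm that the induced residue field extension is an isomorphism. I would carry this out by a semilocal computation analogous to \eqref{Eqn: RingInNormal}, transferred from $\Xsing$ to the compactified Picard scheme using the universal family of sheaves; once this local identification of residue fields is in hand, radicialness and the universal homeomorphism statement follow formally.
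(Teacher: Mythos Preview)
Your outline is essentially the paper's proof, and the first two steps (surjectivity via the presentation scheme, bijectivity on $\kbar$-points via Proposition~\ref{Prop: ClassificationOfJacbar}) match the paper almost verbatim. The difference is in the last step, where you introduce an unnecessary obstacle.

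You write that ``radicialness follows once one checks that the residue field extensions at the collapsed points are trivial,'' and then flag this as the main remaining difficulty, to be handled by a semilocal computation. But no such computation is needed. For a separated morphism $f$ of finite type $k$-schemes, radicialness is equivalent to injectivity on $\kbar$-points: the diagonal $\Delta_f$ is a closed immersion, so if it failed to be surjective, its open complement in $X\times_Y X$ would contain a closed point, hence a $\kbar$-point, giving two distinct $\kbar$-points with the same image. Thus the bijectivity on $\kbar$-points you already established yields radicialness immediately. Together with properness (both schemes are $k$-proper) and the surjectivity you proved, this gives the universal homeomorphism directly; this is exactly how the paper argues, invoking \cite[Proposition~2.4.5]{egaIV_2} in one line. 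Your planned residue-field computation at $[f_*N]$ would work in principle, but it is strictly harder than necessary and would require analyzing the local structure of $\Jacbar^{-1}\Xsing$ at its non-line-bundle points, which the paper never needs.

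One small point you glossed over, which the paper does address: to say that Diagram~\eqref{defineJnat} ``glues the $b(x_0)$ sections into a single point'' you need to know that applying $(\,\cdot\,)(\kbar)$ to the Ferrand pushout square gives a pushout of sets. The paper justifies this by noting that \eqref{defineJnat} is also a pullback square and that $\Pres\Xsing \to \overline{P}^{\natural}$ is an isomorphism away from the image of $\widehat{\epsilon}$.
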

\begin{proof}
	The morphism $\overline{P}^{\natural} \to \Jacbar^{-1} \Xsing$ is surjective because, as we observed after the construction of $\Pres \Xsing$, the morphism $\Pres \Xsing \to \Jacbar^{-1} \Xsing$ is surjective.  Because both $\Pres \Xsing$ and $\Jacbar^{-1} \Xsing$ are $k$-proper, it is enough to show that the induced map $\overline{P}^{\natural}(\kbar) \to (\Jacbar^{-1} \Xsing)(\kbar)$ on $\kbar$-points is injective \cite[Proposition~2.4.5]{egaIV_2}. 
	
	We can describe the $\kbar$-valued points of $\overline{P}^{\natural}$ explicitly.  Applying the functor taking a scheme to the set of its $\kbar$-points sends \eqref{defineJnat} to a pushout diagram, as can be verified e.g.,~by using the fact that $\Pres \Xsing \to \Jacbar^{-1} \Xsing$ is isomorphism away from the image of the canonical embedding and that \eqref{defineJnat} is a pullback diagram \cite[Theorem~5.4]{ferrand03}. Because the formation of the presentation scheme commutes with field extensions, $(\Pres\Xsing)(\kbar) = (\Pres \Xsing_{\kbar})(\kbar)$, so $\overline{P}^{\natural}(\kbar) \to (\Jacbar^{-1} \Xsing)(\kbar)$ is bijective by Proposition~\ref{Prop: ClassificationOfJacbar}.
\end{proof}

\section{The Abel map}\label{Section_Abel_map}
Here we study the Abel map of the curve $\Xsing$ from Section~\ref{Section: FoldSingularities}.  We first compare the definitions of two different Abel maps associated to a smooth curve over $k$ and show that only one of them naturally extends to $\Xsing$.  Second, we lift the Altman--Kleiman Abel map $\Abel \colon \Xsing \to \Jacbar^{-1} \Xsing$ to a morphism $\widetilde{X} \to \Pres \Xsing$ from the normalization to the presentation scheme.  In Section~\ref{section_H1Jbar} we will use this lifted Abel map to compute $H_{1}(\Abel)$.

The Abel map of a smooth curve $\widetilde{X}$ over $k$ is defined in standard texts such as \cite{mumford75}  to be the map $\widetilde{X} \to \Jac^{1}\widetilde{X}$ that sends  a point $x$ to the line bundle $\calO_{\widetilde{X}}(x)$, which is the dual $\calO_{\widetilde{X}}(x) := \underline{\operatorname{Hom}}(I_{x}, \calO_{\widetilde{X}})$ of the ideal $I_{x}$ of $x$.  We denote this morphism by $\Abel^{\vee} \colon \widetilde{X} \to \Jac^{1} \widetilde{X}$ to distinguish it from the morphism $\Abel \colon \widetilde{X} \to \Jac^{-1} \widetilde{X}$ studied by Altman--Kleiman in \cite{altman80}.  There the authors define the Abel map $\Abel \colon \widetilde{X} \to \Jac^{-1} \widetilde{X}$ to be the map that sends a point $x$ to its ideal sheaf $I_{x}$.  

Altman--Kleiman prove that this second definition extends to singular curves: given a singular curve $\overline{X}$ , the  rule $x \mapsto I_{x}$ defines a morphism $\Abel \colon \overline{X} \to \Jacbar^{-1} \overline{X}$ that is a closed embedding provided the genus of $\overline{X}$ is at least $1$ \cite[Theorem~8.8]{altman80}.  We call this map the \textbf{Altman--Kleiman Abel map} of $\overline{X}$.  

The first definition, the definition of $\Abel^{\vee}$, does not always extend to singular curves.  This issue is slightly subtle.  To show that  the rule 
\begin{equation} \label{Eqn: BadAbelRule}
	x \mapsto \underline{\Hom}(I_{x}, \calO_{\overline{X}})
\end{equation}
 defines a morphism $\overline{X} \to \Jacbar^{1} \overline{X}$, one must show that a flat deformation of $x$ induces a flat deformation of $\underline{\Hom}(I_{x}, \calO_{\overline{X}})$.  That is, given a $k$-morphism $x(t) \colon T \to \overline{X}_{T}$, one must show that the duals $\underline{\Hom}(I_{x(t_0)}, \calO_{\overline{X}})$ of the ideals of the fibers $x(t_0)$ of $x(t)$ fit together to form the fibers of a $\calO_{T}$-flat family of sheaves on $\overline{X}_{T}$.  
 
 When $x(t)$ maps into the smooth locus $X := \overline{X}^{\text{sm}}$, essentially the same construction as in the smooth case produces a suitable family of sheaves, and so Equation~\eqref{Eqn: BadAbelRule} defines a morphism $\Abel^{\vee} \colon X \to \Jac^{1} \overline{X}$ from the smooth locus to the Picard scheme.  We call this morphism the \textbf{classical Abel map}.    When $\overline{X}$ is Gorenstein, the classical Abel map extends to a morphism defined on all of $\overline{X}$ because a construction using cohomology and base change produces a suitable family for an arbitrary $x(t)$.  (For the construction, see \cite[Definition~5.0.7]{kass13}.  The argument is a modification of \cite[2.2]{esteves00}.)
 
 The one point compactification $\Xsing$ is, however, Gorenstein only when $b(x_0)=2$.  When $b(x_0) \ge 3$, not only does the construction just reviewed fail to produce a morphism $\Xsing \to \Jacbar^{1} \Xsing$ extending the classical Abel map $\Abel^{\vee}$, but it may be impossible to extend the classical Abel map to a morphism  out of $\Xsing$ by any construction.  We demonstrate this with an example.

\begin{exa} \label{ex_Abeldual_can_fail_to_extend} The classical Abel map $\Abel^{\vee} \colon X \to \Jac^{1} \Xsing$ can fail to extend to a morphism $\Abel^{\vee} \colon \Xsing \to \Jacbar^{1} \Xsing$.
\end{exa}
\begin{proof}[Proof of example]
	Let $k$ be a field of characteristic not $2$.  Define $\Xsing$ by the Pushout Diagram \eqref{Diagram: Curvepush-out} with  $\widetilde{X} := \bbP^{1}$ and $\bound :=\{ 0, 1,-1 \}$, so $\Xsing$ is a rational curve with a unique singularity $x_0$ that is an ordinary $3$-fold point.  We will show that the classical Abel map $\Abel^{\vee}$ of $\Xsing$ is undefined at $x_0$.
	
	We argue as follows.  The composition $\Abel^{\vee} \circ f \colon \widetilde{X} - f^{-1}(x_0) \to \Jacbar \Xsing$ extends to a regular map out of $\widetilde{X}=\bbP^{1}$ by the valuative criterion, and we show directly that the points $\Abel^{\vee}\circ f(0)$, $\Abel^{\vee} \circ f(1)$, and $\Abel^{\vee} \circ f(-1)$ are  distinct.  More precisely 
	\begin{align} \label{Eqn: ThreePoints}
		\Abel^{\vee} \circ f(0) =& [ (g_0)_{*}\calO_{\overline{Y}_0} ], \\
		\Abel^{\vee} \circ f(1) =& [ (g_1)_{*}\calO_{\overline{Y}_1} ], \notag \\
		\Abel^{\vee} \circ f(-1) =& [ (g_{-1})_{*}\calO_{\overline{Y}_{-1}}],  \notag
	\end{align}
	where $g_{0} \colon \overline{Y}_{0} \to \Xsing$, $g_{-1} \colon \overline{Y}_{-1} \to \Xsing$, and $g_{1} \colon \overline{Y}_{1} \to \Xsing$ are the three nodal curves lying between $\Xsing$ and $\widetilde{X}$. So, $\overline{Y}_{0}$ is the pushout of $\widetilde{X}=\bbP^{1}$ and $\{1, -1\}$ over $\Spec(k)$, etc.
	
	We verify Equation~\eqref{Eqn: ThreePoints} for the point $0$ and leave the remaining cases (which involve only notational changes to the argument given) to the interested reader. To verify the equation, we use the dualizing sheaf $\omega$.  Recall the rule $I \mapsto \underline{\Hom}(I, \omega)$ defines an involution on the set of rank $1$, torsion-free sheaves and the formation of $\underline{\Hom}(I, \omega)$ commutes with families  \cite[2.2]{esteves00}.  We verify Equation~\eqref{Eqn: ThreePoints}  by constructing a family of sheaves such that the dual family defines $\Abel^{\vee} \circ f$.

	The dualizing module $\omega$ can be described as the module of Rosenlicht differentials, i.e.~rational differentials on $\widetilde{X}$ with at worst simple poles at $0, -1,$ and $1$ and satisfying the condition that the residues at these points sum to zero.  Because a section of $\omega$ has at worst a simple pole at $0$, the rule $\phi(p(t)/q(t) dt)=a p(a)/q(a)$ defines a homomorphism 
	$$
		\phi \colon \omega \otimes k[a][1/(a^2-1)] \to k[a][1/(a^2-1)].
	$$
	Let $I_a$ equal the kernel. Note that away from $a = 0$, this kernel consists of those differentials in $\omega$ which vanish at $t=a$. At $a=0$, this kernel consists of such differentials whose residue at $0$ is $0$. In other words, for $\alpha \ne 0, -1, 1$, the fiber $I_{\alpha}$ of $I_{a}$ over $\Spec(k[a]/(a-\alpha)) \to \Spec(k[a][1/(a^2-1)])$ is $\omega(-f(a))$, and  $I_0$ is the sheaf of rational differentials on $\widetilde{X}$ with at worst simple poles at $-1$ and $1$ and satisfying the condition that the residues sum to zero.  By Rosenlicht's definition (or description) of $(g_{0})_{*} \omega_{Y_{0}}$, we see that $I_0 = (g_{0})_{*} \omega_{Y_{0}}$.   
	
	The kernel $\Ker \phi = I_a$ is $k[a][1/(a^2-1)]$-flat because $\phi$ is surjective.  
	
	Furthermore, the $\omega$-dual $\underline{\Hom}(I_{a}, \omega)$ defines $\Abel^{\vee} \circ f$.  Indeed, $\underline{\Hom}(I_{a}, \omega)$ is flat and its formation commutes with passing to fibers by \cite[2.2]{esteves00}.  Since $I_a =\omega(-f(a))$ for $\alpha \ne 0, -1, 1$, we deduce 
	$$
		\Abel^{\vee} \circ f(0) = [\underline{\Hom}(I_{0}, \omega)].
	$$ 
	
	 By the above, $I_0 =(g_{0})_{*} \omega_{Y_{0}} $.  The morphism
	$$
		(g_{0})_{*} \calO_{Y_0} \to \underline{\Hom}(g_{*} \omega_{Y_0}, \omega)
	$$
	defined by sending a rational function to multiplication by that function defines an isomorphism, showing that $\Abel^{\vee} \circ f(0) = (g_{0})_{*} \calO_{Y_0}$ as claimed.
\end{proof}

While the classical Abel map $\Abel^{\vee}$ may not be defined on all of $\Xsing$, the Altman--Kleiman Abel map $\Abel$ is not only defined on all of $\Xsing$, but it admits a natural lift to a morphism $\AbelLift \colon \widetilde{X} \to \Pres X$. Given $x=f(\widetilde{x})$ with $\widetilde{x} \in X \subset \widetilde{X}$,  the ideal sheaf $I_{x}$ of $x$ in $\Xsing$ admits the presentation $i_{\text{can}} \colon I_{x} \to f_{*} I_{\widetilde{x}} = f_{*} f^{*} I_{x}$ defined in \eqref{Eqn: MotivatePres}.  The resulting morphism $X \to \Pres \Xsing$ extends to the morphism $\AbelLift \colon \widetilde{X} \to \Pres \Xsing$ that we now define.

\begin{df}
	Define $\Gamma \subset \Xsing \times \widetilde{X}$ to be the transpose of the graph of $f$ and $\widetilde{\Delta} \subset \widetilde{X} \times \widetilde{X}$ to be the diagonal.  
	Define $I_{\widetilde{x}(t)}$ to be the ideal of $\widetilde{\Delta}$ and $J_{t}$ the ideal of $\Gamma$.
\end{df}

\begin{lm}\label{Jt_to_I_isprensentation}
	The natural inclusion $i \colon J_{t} \to (f \times 1)_{*} I_{\widetilde{x}(t)}$ is a family of presentations over $\widetilde{X}$.
\end{lm}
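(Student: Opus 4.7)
The plan is to verify in order that (i) $J_t$ is a family of rank 1, torsion-free sheaves on $\Xsing$ parameterized by $\widetilde{X}$, (ii) $I_{\widetilde{x}(t)}$ is a line bundle on $\widetilde{X} \times \widetilde{X}$, and (iii) $i$ is injective with cokernel a locally free $\calO_{\{x_0\} \times \widetilde{X}}$-module of rank $b(x_0)-1$. For (i), since $\Gamma$ maps isomorphically to $\widetilde{X}$ via the second projection (with inverse $\widetilde{x} \mapsto (f(\widetilde{x}), \widetilde{x})$), the exact sequence
\begin{equation*}
0 \to J_t \to \calO_{\Xsing \times \widetilde{X}} \to \calO_\Gamma \to 0
\end{equation*}
exhibits $J_t$ as the kernel of a surjection between $\widetilde{X}$-flat sheaves; a $\Tor$ argument then gives $\widetilde{X}$-flatness of $J_t$, and tensoring with $k(\widetilde{x}_0)$ identifies the fiber of $J_t$ at $\widetilde{x}_0$ with $I_{f(\widetilde{x}_0)} \subset \calO_\Xsing$, which is rank 1 torsion-free since $\Xsing$ is integral. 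Condition (ii) is immediate from $\widetilde{X}$ being a smooth curve, so that $\widetilde{\Delta}$ is a relative effective Cartier divisor on $\widetilde{X} \times \widetilde{X}$.

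The main content is (iii). The key observation is that the restriction $(f \times 1)|_{\widetilde{\Delta}} \colon \widetilde{\Delta} \to \Gamma$ is an isomorphism: under the natural identifications of $\widetilde{\Delta}$ and $\Gamma$ with $\widetilde{X}$ via the second projection, it is the identity on $\widetilde{X}$. Consequently the natural morphism $\calO_\Gamma \to (f \times 1)_* \calO_{\widetilde{\Delta}}$ is an isomorphism, and $i$ fits into a commutative diagram whose top row is the exact sequence displayed above and whose bottom row is the $(f \times 1)$-direct image of $0 \to I_{\widetilde{\Delta}} \to \calO_{\widetilde{X} \times \widetilde{X}} \to \calO_{\widetilde{\Delta}} \to 0$. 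The middle vertical map, $\calO_{\Xsing \times \widetilde{X}} \to (f \times 1)_* \calO_{\widetilde{X} \times \widetilde{X}}$, is the pullback along $\pi_1 \colon \Xsing \times \widetilde{X} \to \Xsing$ of the injection $\calO_\Xsing \hookrightarrow f_* \calO_{\widetilde{X}}$ by flat base change along the evident cartesian square, hence is injective. The snake lemma then yields injectivity of $i$ and $\Coker(i) \cong \pi_1^*(f_* \calO_{\widetilde{X}} / \calO_\Xsing)$. By the exact sequence \eqref{Eqn: RingInNormal}, the quotient $f_* \calO_{\widetilde{X}}/\calO_\Xsing$ is a skyscraper at $x_0$ whose stalk is a $k(x_0)$-module of rank $b(x_0)-1$, so its $\pi_1$-pullback is a locally free $\calO_{\{x_0\} \times \widetilde{X}}$-module of rank $b(x_0)-1$, as required.

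The main obstacle will be the identification $(f \times 1)_* \calO_{\widetilde{\Delta}} \cong \calO_\Gamma$; once this is in hand, the structure of $\Coker(i)$ follows transparently from that of $\Coker(\calO_\Xsing \hookrightarrow f_* \calO_{\widetilde{X}})$ recorded in \eqref{Eqn: RingInNormal}.
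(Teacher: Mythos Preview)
Your proof is correct and follows essentially the same route as the paper: both arguments hinge on the observation that $(f\times 1)|_{\widetilde{\Delta}}\colon\widetilde{\Delta}\to\Gamma$ is an isomorphism, then apply the Snake Lemma to the map between the defining short exact sequences of $J_t$ and $(f\times 1)_*I_{\widetilde{x}(t)}$ to identify $\Coker(i)$ with the pullback of $f_*\calO_{\widetilde{X}}/\calO_{\Xsing}$, which is handled by~\eqref{Eqn: RingInNormal}. The only differences are cosmetic: the paper lays out the full $3\times 3$ diagram rather than invoking the Snake Lemma verbally, and it does not pause to verify your preliminary points (i) and (ii), which it treats as implicit.
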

\begin{proof}
	We prove that $i$ is family of presentations by showing that  $\Coker(i)$ is isomorphic to  $(\bigoplus k(\widetilde{x}) )/k(x_0) \otimes \calO_{\widetilde{X}}$.  This last module is the cokernel of the inclusion $\calO_{\Xsing \times \widetilde{X}} \to \calO_{\widetilde{X} \times \widetilde{X}}$, as the formation of the pushout $\Xsing$ commutes with the flat base change $\Xsing \times \widetilde{X} \to \Xsing$.  All of the modules in question fit into the following commutative diagram with exact rows and columns:
	$$
		\begin{CD}
					@.		0							@.	0										@.	0		@.	 \\
			@.				@VVV								@VVV											@VVV			@. \\
			0		@>>>		J_{t}							@>>>	(f \times 1)_{*}I_{\widetilde{x}(t)}							@>>>	\Coker(i)	@>>>	0 \\
			@.				@VVV								@VVV											@VVV			@. \\
			0		@>>>		\calO_{\Xsing \times \widetilde{X}}	@>>>	(f \times 1)_{*} \calO_{\widetilde{X} \times \widetilde{X}} @>>>	(\bigoplus k(\widetilde{x}))/k(x_0) \otimes \calO_{\widetilde{X}} 	@>>>	0	\\
			@.				@VVV								@VVV											@VVV			@. \\
			0		@>>>		\calO_{\Gamma}				@>>>	(f \times 1)_{*}\calO_{\widetilde{\Delta}}			@>>>	C		@>>>	0 \\
			@.					@VVV								@VVV											@VVV			@. \\
					@.			0							@.		0										@.		0.		@.		
		\end{CD}
	$$
	The existence of the diagram follows by e.g.,~applying the Snake Lemma to the first two columns.  In the diagram, $\calO_{\Gamma}$ and $\calO_{\widetilde{\Delta}}$ are structure sheaves, and $C$ is the cokernel of $\Coker(i) \to (\bigoplus k(\widetilde{x}))/k(x_0)$ or equivalently of $\calO_{\Gamma} \to (f \times 1)_{*} \calO_{\widetilde{\Delta}}$.  
	
	Now $\Gamma$ and $\widetilde{\Delta}$ are both images of $\widetilde{X}$ under a closed embedding, and under the associated identifications $\Gamma = \widetilde{X}$, $\widetilde{\Delta} = \widetilde{X}$, the morphism $f \times 1 \colon \widetilde{\Delta} \to \Gamma$ becomes identified with the identity.  In particular, the natural morphism $\calO_{\Gamma} \to (f \times 1)_{*}\calO_{\widetilde{\Delta}}$ is an isomorphism and so $C=0$.  We can conclude that the natural morphism $\Coker(i) \to (\bigoplus k(\widetilde{x}))/k(x_0) \otimes \calO_{\widetilde{X}}$ is an isomorphism.  The target of this isomorphism is a free $\calO_{ \{ x_0 \} \times \widetilde{X}}$-module of rank $b(x_0)-1$, so we can conclude that $i$ is a family of presentations.
\end{proof}

\begin{df} \label{df_lifted_Abel_map}
	We define the \textbf{lifted Abel map} $\AbelLift \colon \widetilde{X} \to \Pres \Xsing$ to be the morphism defined by the family of presentations from Lemma~\ref{Jt_to_I_isprensentation}. 
\end{df}
By construction, the lifted Abel map has the property that the composition $\PtoPic \circ  \AbelLift$ is the Altman--Kleiman Abel map $\AbelTilde$ of $\widetilde{X}$, and  composition $\PtoJbar \circ \AbelLift$ is the composition $\Abel \circ f$ of Altman--Kleiman Abel map of $\Xsing$ with the normalization map.

\section{Cohomology of $X$ via $\Jac$}\label{Section:CohXfromPic}

Let $X$ be a smooth curve over $k$, contained as an open subset of $\widetilde{X}$, which is smooth and proper. Assume that $\widetilde{X}$ has genus greater than $0$, so its Abel map is non-trivial. We show that $\rH^1(X_{\kbar}, R(1))$ with its $\Gal(\kbar/k)$-module structure is obtained from the fundamental groupoid of $\Jac^{-1} \widetilde{X}$. When $\widetilde{X}$ has genus $0$, a degenerate form of this result holds, given in Remark~\ref{H1for_genus0_tildeX}.

For $R= \Z/\ell^m$ (respectively $\Z_{\ell}$), let  $\mathcal{U}_{R}$ denote the functor taking a finitely generated $R$-module to the underlying (topological) groupoid. Under suitable finiteness hypotheses on the category of groupoids, say groupoids with finitely many objects and finitely (topologically) generated morphism spaces, $\mathcal{U}_{R}$ has a left adjoint, denoted $\mathcal{F}_R$.

For such a groupoid $\pi$, with objects $D$ and source, target maps $s,t: \pi \to D$ respectively, there is a canonical functorial exact sequence \begin{equation}\label{groupoidSES} 0 \to \mathcal{F}_R \pi_1 \to \mathcal{F}_R \pi \stackrel{t-s}{\to} \oplus_D R \stackrel{\oplus_D \operatorname{id}} \to R \to 0,\end{equation}  where $\pi_1$ is the sub-groupoid spanned by any single object of $D$. To see that \eqref{groupoidSES} is canonical, note that the group $\mathcal{F}_R \pi_1$ is independent of the choice of object defining $\pi_1$ because a morphism $d_s \to d_t$ in $\pi$ determines an isomorphism $\pi_{d_s} \to \pi_{d_t}$ between the corresponding sub-groupoids, and applying $\mathcal{F}_R$, gives an isomorphism $\mathcal{F}_R \pi_{d_s} \to \mathcal{F}_R \pi_{d_t}$ independent of the choice of morphism from $d_s$ to $d_t$. 

Let $\bound \subsetneq \widetilde{X}$ be a closed subscheme with open complement $X$. For each point $\widetilde{x}$ of $\bound_{\kbar}$ choose a $\kbar$-geometric point with image $\widetilde{x}$ and let $\bound(\kbar)$ be the set of these chosen geometric points.  We may assume that $\bound(\kbar)$ is stable under the $\Gal(\kbar/k)$-action.

\begin{tm} \label{Prop_groupoid_to_H1}
	There is a canonical functorial isomorphism of $\Gal(\kbar/k)$-modules $$\mathcal{F}_R \pi_1^{\ell} (\Jac^{-1} \widetilde{X}_{\kbar}, \Abel_* \bound(\kbar)) \cong \rH^1(X_{\kbar}, R(1)).$$
\end{tm}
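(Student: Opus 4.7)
The plan is to match $\mathcal{F}_R \pi$ and $\rH^1(X_{\kbar}, R(1))$ (where $\pi := \pi_1^{\ell}(\Jac^{-1} \widetilde{X}_{\kbar}, \Abel_* \bound(\kbar))$) as the middle terms of two canonically identical four-term exact sequences, and then apply the five lemma. Applying the canonical sequence \eqref{groupoidSES} to $\pi$ and using the translation isomorphism $\pi_1^{\ell}(\Jac^{-1} \widetilde{X}_{\kbar}, e) \cong \pi_1^{\ell}(\Jac^0 \widetilde{X}_{\kbar}, 0)$ together with Proposition~\ref{natl_iso_pi_1Pic=H1} yields
$$0 \to \rH^1(\widetilde X_{\kbar}, R(1)) \to \mathcal{F}_R \pi \to \bigoplus_{\widetilde x \in \bound(\kbar)} R \to R \to 0.$$
The étale localization sequence for $X \hookrightarrow \widetilde X$ with closed complement $\bound$, combined with absolute purity $\rH^i_{\bound}(\widetilde X_{\kbar}, R(1)) \cong \bigoplus_{\widetilde x \in \bound(\kbar)} \rH^{i-2}(\widetilde x, R)$, the trace isomorphism $\rH^2(\widetilde X_{\kbar}, R(1)) \cong R$, and the vanishing $\rH^2(X_{\kbar}, R(1)) = 0$ (smooth non-proper curve), produces
$$0 \to \rH^1(\widetilde X_{\kbar}, R(1)) \to \rH^1(X_{\kbar}, R(1)) \to \bigoplus_{\widetilde x \in \bound(\kbar)} R \to R \to 0,$$
where the last map is summation (each geometric closed point has cycle class $1$ in $\rH^2(\widetilde X_{\kbar}, R(1)) \cong R$), matching the augmentation of \eqref{groupoidSES}.

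To build a natural map $\sigma \colon \mathcal{F}_R \pi \to \rH^1(X_{\kbar}, R(1))$, by the $(\mathcal{F}_R, \mathcal{U}_R)$-adjunction it suffices to assign to each path $\gamma \colon \Abel(\widetilde x_s) \to \Abel(\widetilde x_t)$ in $\pi$ an element $\sigma(\gamma)$ additively in composition. For each $n$ consider the $\Jac^0[\ell^n]$-Galois étale cover $p_n \colon \Jac^0 \widetilde X_{\kbar} \to \Jac^{-1} \widetilde X_{\kbar}$, $M \mapsto I_{\widetilde x_s} \otimes M^{\otimes \ell^n}$; its fiber over $I_{\widetilde x_s}$ is $\Jac^0[\ell^n]$ (with marked identity $\calO$) and its fiber over $I_{\widetilde x_t}$ is the torsor of $\ell^n$-th roots of $\calO_{\widetilde X}(\widetilde x_s - \widetilde x_t)$. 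The path $\gamma$ transports $\calO$ to a canonical $M_{\gamma,n}$ with $M_{\gamma,n}^{\otimes \ell^n} \cong \calO_{\widetilde X}(\widetilde x_s - \widetilde x_t)$. The line bundle $M_{\gamma,n}^{-1}|_X$ then carries a canonical trivialization of its $\ell^n$-th power coming from the tautological trivialization of $\calO_{\widetilde X}(\widetilde x_t - \widetilde x_s)|_X$, and Kummer theory turns this pair into a class $\sigma_n(\gamma) \in \rH^1(X_{\kbar}, \mu_{\ell^n})$. Functoriality of fiber transport for $p_n$ under composition of paths gives $M_{\gamma' \circ \gamma, n} \cong M_{\gamma,n} \otimes M_{\gamma',n}$, hence additivity of Kummer classes under tensor product makes $\sigma_n$ a well-defined groupoid functor; these assemble in $n$ to $\sigma$.

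For the five-lemma check: on the left square, for a loop $\gamma$ at $e = \Abel(\widetilde x_0)$ one has $M_{\gamma,n} \in \Jac^0[\ell^n]$ and $\sigma_n(\gamma)$ is the restriction from $\widetilde X$ to $X$ of the Kummer class of $M_{\gamma,n}^{-1}$, which matches the composition of the Proposition~\ref{natl_iso_pi_1Pic=H1} identification followed by the localization-sequence restriction. On the right square, the tame-symbol residue at $\widetilde x$ of the Kummer class of $(L, L^{\otimes \ell^n} \cong \calO(D))$ with $D$ supported on $\bound$ equals $\mathrm{mult}_{\widetilde x}(D) \bmod \ell^n$, so with $L = M_{\gamma,n}^{-1}$ and $D = \widetilde x_t - \widetilde x_s$ the residue vector is $[\widetilde x_t] - [\widetilde x_s]$, matching the target-minus-source map $(t-s)$ of \eqref{groupoidSES}. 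Both outer vertical arrows being isomorphisms, $\sigma$ is an isomorphism. Galois equivariance and functoriality in $(\widetilde X, \bound)$ are automatic because every ingredient (the Abel map, multiplication by $\ell^n$, Kummer, and fiber transport) is canonically defined over $k$ and commutes with base change. The main delicate step is the residue-multiplicity computation in the right square, a standard local calculation on Kummer classes that must be executed carefully to fix sign conventions.
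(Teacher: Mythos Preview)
Your proposal is correct and follows essentially the same route as the paper: both construct the map by transporting along paths in the multiplication-by-$\ell^n$ cover of $\Jac^0 \widetilde X_{\kbar}$ (pulled back to $\Jac^{-1}$ by a translation), interpret the resulting $\ell^n$-th root via Kummer theory to land in $\rH^1(X_{\kbar},\mu_{\ell^n})$, and then verify the five-lemma comparison between the groupoid sequence \eqref{groupoidSES} and the localization/purity sequence for $(X_{\kbar},\widetilde X_{\kbar})$. The only cosmetic differences are that the paper works with an arbitrary translation point $a$ (and notes independence of the choice) rather than the specific $I_{\widetilde x_s}$, and spells out the Serre--Lang plus Kummer argument directly instead of citing Proposition~\ref{natl_iso_pi_1Pic=H1}.
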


\begin{rmk}
	When $\bound= \emptyset$ one may take $\Abel_* \bound(\kbar)$ to be the set consisting of any single geometric point of $\Jac^{-1}\widetilde{X}_{\kbar}$.
\end{rmk}

\begin{rmk}\label{H1for_genus0_tildeX}
	When $\widetilde{X}$ has genus $0$, we have a canonical functorial short exact sequence of $\Gal(\kbar/k)$-modules $$\xymatrix{0 \ar[r] & \rH^1(X_{\kbar}, R(1)) \ar[r] & \oplus_{\bound(\kbar)} R \ar[rr]^{\oplus_{\bound(\kbar)} \operatorname{id}} && R \ar[r] & 0.}$$ This follows from applying cohomology to the pair $(X_{\kbar}, \widetilde{X}_{\kbar})$. Compare with \eqref{groupoidSES} and Theorem~\ref{Prop_groupoid_to_H1}.
\end{rmk}

\begin{proof}
Multiplication by $\ell^n$ $$m_{\ell^n}: \Jac^0 \widetilde{X}_{\kbar} \to \Jac^0 \widetilde{X}_{\kbar}$$ is a finite \'etale map. For a geometric point $a$ of $ \Jac^0 \widetilde{X}_{\kbar}$, let $\phi_a [m_{\ell^n}]$, abbreviated $\phi_a[\ell^n]$, denote the fiber above $a$.

For every $n$ and $\widetilde{x}_s$, $\widetilde{x}_t$ in $\bound(\kbar)$, there is a canonical, functorial\hidden{, Galois-equivariant (for the Galois group of any field over which $\widetilde{x}_s$ and $\widetilde{x}_t$ are defined)} map \begin{equation}\label{path_to_root} \pi_1^{\ell} (\Jac^{-1} \widetilde{X}_{\kbar}, \Abel_* \widetilde{x}_s, \Abel_* \widetilde{x}_t ) \to \phi_{\Abel_* \widetilde{x}_t - \Abel_* \widetilde{x}_s} [\ell^n]\end{equation} constructed as follows.

Addition by a $\kbar$-point $a$ of $\Jac^1 \widetilde{X}_{\kbar}$ defines $+_a: \Jac^{-1} \widetilde{X}_{\kbar} \to \Jac^0 \widetilde{X}_{\kbar}$. Pulling back $m_{\ell^n}$ by $+_a$ produces a finite \'etale map to $\Jac^{-1} \widetilde{X}_{\kbar}$, whence a map $$ \pi_1^{\ell} (\Jac^{-1} \widetilde{X}_{\kbar}, \Abel_* \widetilde{x}_s, \Abel_* \widetilde{x}_t ) \to \Mor (\phi_{ \Abel_* \widetilde{x}_s} [+_a^* m_{\ell^n}], \phi_{ \Abel_* \widetilde{x}_t} [+_a^* m_{\ell^n}]).$$ Since elements of $\pi_1^{\ell}$ are natural transformations between fiber functors, the image is contained in the morphisms $$\phi_{a+ \Abel_* \widetilde{x}_s}[\ell^n] \cong \phi_{ \Abel_* \widetilde{x}_s} [+_a^* m_{\ell^n}] \to \phi_{ \Abel_* \widetilde{x}_t} [+_a^* m_{\ell^n}]) \cong \phi_{a+ \Abel_* \widetilde{x}_t}[\ell^n]$$ given by addition by an element of $\phi_{\Abel_* \widetilde{x}_t - \Abel_* \widetilde{x}_s} [\ell^n]$. Sending the morphism to this element of $\phi_{\Abel_* \widetilde{x}_t - \Abel_* \widetilde{x}_s} [\ell^n]$ defines \eqref{path_to_root}, which is independent of the choice of $a$. 

By the moduli definition of the Picard functor, the $\kbar$-points of $\Jac \widetilde{X}_{\kbar}$ determine invertible sheaves on $\widetilde{X}_{\kbar}$. For the image $\widetilde{x}$ in $\widetilde{X}_{\kbar}$ of any element of $\bound(\kbar)$, the restriction of $\mathcal{O}_{\widetilde{X}_{\kbar}}(\widetilde{x})$ to $X_{\kbar}$ has a canonical trivialization coming from the inclusion of the ideal sheaf $I_{\widetilde{x}} =   \mathcal{O}_{\widetilde{X}_{\kbar}}(-\widetilde{x})$ into $\mathcal{O}_{\widetilde{X}_{\kbar}}$ which becomes an isomorphism after pullback to $X_{\kbar}$. Thus elements of $\phi_{\Abel_* \widetilde{x}_t - \Abel_* \widetilde{x}_s} [\ell^n]$ determine an invertible sheaf $L$ on $X_{\kbar}$ equipped with a trivialization of $\otimes^{\ell^n} L$.

The data of an invertible sheaf $L$ on $X_{\kbar}$ equipped with a trivialization of $\otimes^{\ell^n} L$ produces a canonical element of $\rH^1(X_{\kbar}, \Z/\ell^n(1))$ as follows:  there is a map $L \to \otimes^{\ell^n} L$ taking a section $a$ of $L$ over a Zariski-open $U$ to $\otimes^{\ell^n} a$. The coherent sheaves $L$ and $\otimes^{\ell^n} L$ determine $\G_m$-torsors, which we view as sheaves of sets in the \'etale topology equipped with an action of $\G_m$. Under this identification $a \mapsto \otimes^{\ell^n} a$ respects the actions of $\G_m$ in the sense that for $b \in \G_m$, $$a\cdot b \mapsto   \otimes^{\ell^n} a \cdot b^{\ell^n}.$$ There is an element $1$ in the set of sections over any \'etale open of the $\G_m$-torsor $\otimes^{\ell^n} L$ determined by the chosen trivialization of $\otimes^{\ell^n} L$. The sections $s$ of the $\G_m$-torsor $L$ such that $\otimes^{\ell^n} s$ equals $1$ determine a $\mu_{\ell^n}$-torsor, whence a canonical element of $\rH^1(X_{\kbar}, \Z/\ell^n(1))$.

Thus there is a canonical, functorial map \begin{equation}\label{root_to_H1}  \phi_{\Abel_* \widetilde{x}_t - \Abel_* \widetilde{x}_s} [\ell^n] \to \rH^1(X_{\kbar}, \Z/\ell^n(1)) .\end{equation} 

Composing \eqref{path_to_root} and \eqref{root_to_H1} determines a $\Gal(\kbar/k)$-equivariant map $$ \pi_1^{\ell} (\Jac^{-1} \widetilde{X}_{\kbar}, \Abel_* \bound(\kbar)) \to \rH^1(X_{\kbar}, R(1)),$$ whence a map  $$\theta: \mathcal{F}_R \pi_1^{\ell} (\Jac^{-1} \widetilde{X}_{\kbar}, \Abel_* \bound(\kbar)) \to \rH^1(X_{\kbar}, R(1)),$$ which we will show to be an isomorphism.

Applying $\rH^*(-, R(1))$ to the pair $(X_{\kbar}, \widetilde{X}_{\kbar})$ gives the exact sequence \begin{align*}\label{HRpairXtildeXMES} \ldots \to \rH^1(X_{\kbar}, \widetilde{X}_{\kbar}; R(1)) \to \rH^1(\widetilde{X}_{\kbar}, R (1)) \to  \rH^1(X_{\kbar}, R(1))\\ \to \rH^2(X_{\kbar}, \widetilde{X}_{\kbar}; R(1)) \to \rH^2(\widetilde{X}_{\kbar}, R(1)) \to \rH^2(X_{\kbar}, R(1)) \to \ldots .\end{align*}

By purity \cite[VI, Theorem~5.1]{Milnebook}, the relative cohomology groups are computed by $ \rH^1(\widetilde{X}_{\kbar}, X_{\kbar}; R(1)) \cong 0$, and $\rH^2(\widetilde{X}_{\kbar}, X_{\kbar} ; R(1)) \cong \rH^0( \bound_{\kbar}, R)$. \hidden{For this, $\rH^*(\widetilde{X}_{\kbar}, X_{\kbar}; F) = \tilde{\rH}^*(\operatorname{Thom}(\bound, \nu(\bound,\widetilde{X}_{\kbar})), F )$. In the presence of a Thom class, the spectra $\operatorname{Thom}(\bound, \nu(\bound,\widetilde{X}_{\kbar})) \wedge F $ trivializes the twisting of $\operatorname{Thom}(\bound, \nu(\bound,\widetilde{X}_{\kbar}))$ giving $\operatorname{Thom}(\bound, \nu(\bound,\widetilde{X}_{\kbar})) \wedge F  \cong \operatorname{Thom}(\bound, \bound \times \mathbb{A}^c ) \wedge F $. We assert that $\operatorname{Thom}(\bound, \bound \times \mathbb{A}^c ) \cong {\mathbb{P}^1}^{\wedge c} \wedge \bound$. Since $\mathbb{P}^1 = \Sigma \G_m$ and $\tilde{\rH}^*(\G_m )= \Zhat (-1)$ for $*= 1$ and $0$ otherwise, we have that $\tilde{\rH}^* ({\mathbb{P}^1}^{\wedge c} \wedge \bound, F) = \tilde{\rH}^{* - 2c} (\bound, F(-c))$. In total, this gives that $\rH^*(X_{\kbar}, \widetilde{X}_{\kbar}; F) =  \widetilde{\rH}^{*-2c}(\bound,F(-c))$.} By Poincar\'e duality \cite[Theorem~11.1]{Milnebook}, there is a unique isomorphism $\rH^2(\widetilde{X}_{\kbar}, R(1)) \to R$ taking the cycle class of a point to $1$  \cite[Cycle 2.1.5]{sga4andhalf}. Since $X_{\kbar}$ is affine of dimension $1$, the group $ \rH^2(X_{\kbar}, R(1)) $ vanishes. Substituting these computations into the above gives \begin{equation}\label{XXtildeES} 0 \to  \rH^1(\widetilde{X}_{\kbar}, R (1)) \to \rH^1(X_{\kbar}, R(1)) \to \bigoplus_{\bound(\kbar)} R \stackrel{\oplus \operatorname{id}}{\longrightarrow} R \to 0. \end{equation}

It is straight-forward to check that $\theta$ induces a map of exact sequences from \eqref{groupoidSES} with $\pi = \pi_1^{\ell} (\Jac^{-1} \widetilde{X}_{\kbar}, \Abel_* \bound(\kbar))$ to \eqref{XXtildeES} which is the identity on $\bigoplus_{\bound(\kbar)} R$  and $R$. By \eqref{path_to_root}, there is a map $$\pi_1 \to \varprojlim_n \Jac^0 \widetilde{X} (\kbar) [\ell^n],$$ which is an isomorphism because $\Jac^0 \widetilde{X}_{\kbar}$ is an abelian variety \cite[Chapter~IV, 18, Serre--Lang Theorem]{Mumford_AV}. Since the N\'eron--Severi group of $\widetilde{X}_{\kbar}$ is torsion-free, $\Jac^0 \widetilde{X} (\kbar) [\ell^n] \cong \Jac \widetilde{X}(\kbar) [\ell^n].$ By the moduli definition of the Picard functor, its $\ell^n$-torsion points over $\kbar$ are $ \Jac \widetilde{X}(\kbar) [\ell^n] \cong \rH^1(\widetilde{X}_{\kbar}, \G_m)[\ell^n]$. The Kummer exact sequence $$1 \to \mu_{\ell^n} \to \G_m \stackrel{b \mapsto b^{\ell^n}}\to \G_m \to 1 $$ and the fact that $\kbar^*$ is $\ell^n$-divisible shows $$ \rH^1(\widetilde{X}_{\kbar}, \G_m)[\ell^n] \cong \rH^1(\widetilde{X}_{\kbar}, \mu_{\ell^n}),$$ whence $$\pi_1 \cong  \rH^1(\widetilde{X}_{\kbar}, \Z_{\ell}(1)).$$ Thus $\theta$ is an isomorphism.

\end{proof}

\section{Homology of $\Jacbar \Xsing $}\label{section_H1Jbar} 

This section gives a canonical isomorphism of $\Gal(\kbar/k)$-modules $$\rH_1(\Jacbar^{-1} \Xsing_{\kbar},R) \to \rH^1(X_{\kbar}, R(1)).$$ 

By Proposition~\ref{Proposition: UnivesalHomeo}, there is a universal homeomorphism $\overline{P}^{\natural} \to \Jacbar^{-1} \Xsing$, where $\overline{P}^{\natural}$ is given by the pushout square $$\xymatrix{ \Jac^{0} \widetilde{X} \times f^{-1}(x_0) \ar[d] \ar[rr] && \Pres \Xsing \ar[d]^{{\PtoJbar}^{\natural}} \\ \Jac^{0} \widetilde{X} \ar[rr]\hidden{^{\mathcal{L} \mapsto f_* (\mathcal{L} \otimes \mathcal{I}_{f^{-1}(x_0)})}} && \overline{P}^{\natural}  ,}$$ which after base change to $\kbar$ becomes the pushout square \begin{equation}\label{Jnatural_kbar_CD} \xymatrix{ \coprod_{\bound(\kbar)} \Jac^{0} \widetilde{X}_{\kbar} \ar[d]_{\coprod \operatorname{id}} \ar[r]^{\hat{\epsilon}_{\kbar}} & \Pres \Xsing_{\kbar} \ar[d] \\ \Jac^{0} \widetilde{X}_{\kbar} \ar[r] & \overline{P}^{\natural}_{\kbar} .}\end{equation} 


Assume $\widetilde{X}_{\kbar}$ is not genus $0$. Let $e$ be a geometric point of $\Jac^0 \widetilde{X}_{\kbar}$ whose image is the identity element. The maps $\Jac^0 \widetilde{X}_{\kbar} \to \Pres \Xsing_{\kbar}$ comprising $\hat{\epsilon}_{\kbar}$ send $e$ to a set of geometric points of $\Pres \Xsing_{\kbar}$, denoted $\mathcal{E}$. Since all points of $\mathcal{E}$ have the same image in $\overline{P}^{\natural}_{\kbar}$, there is an induced $\Gal(\kbar/k)$-equivariant map $$ \pi_1^{\ell}( \Pres \Xsing_{\kbar},  \mathcal{E}) \to \pi_1^{\ell} (\overline{P}^{\natural}_{\kbar}).$$ Composing with the Hurewicz map  gives $\pi_1^{\ell}( \Pres \Xsing_{\kbar},  \mathcal{E}) \to \rH_1(\overline{P}^{\natural}_{\kbar}, R),$ whence \begin{equation}\label{FrpiPtoH1J}\mathcal{F}_R \pi_1^{\ell}( \Pres \Xsing_{\kbar},  \mathcal{E}) \to \rH_1(\overline{P}^{\natural}_{\kbar}, R).\end{equation}

By Proposition~\ref{Prop: PresExists}, the presentation scheme is a projective bundle $${\PtoPic}_{\kbar}: \Pres \Xsing_{\kbar} \to \Jac^{-1} \widetilde{X}_{\kbar}$$ such that the composite map $$\coprod_{\bound(\kbar)} \Jac^{0} \widetilde{X}_{\kbar} \to \Pres \Xsing_{\kbar} \to \Jac^{-1} \widetilde{X}_{\kbar}$$ is the coproduct over $\widetilde{x}$ in $\bound(\kbar)$ of the maps $L \mapsto L \otimes I_{\widetilde{x}}.$ This projective bundle induces a $\Gal(\kbar/k)$-equivariant isomorphism $\pi_1^{\ell}( \Pres \Xsing_{\kbar},  \mathcal{E}) \to \pi_1^{\ell}(\Jac^{-1} \widetilde{X}_{\kbar}, \Abel_* \bound(\kbar))$ by the homotopy exact sequence for the fundamental group \cite[X, Corollary~1.4]{sga1}, which extends to this isomorphism of groupoids, for instance by \eqref{groupoidSES}.  Here we are using that the geometric points of  $\Abel_* \bound (\kbar)$ are distinct, which follows from the hypothesis that $\widetilde{X}_{\kbar}$ is not genus $0$. Applying Theorem~\ref{Prop_groupoid_to_H1} defines $$\sigma: \rH^1(X_{\kbar}, R(1)) \to  \rH_1(\overline{P}^{\natural}_{\kbar}, R).$$

\begin{pr}	\label{Prop_H1XR(1)toH1jbar_iso}
	$\sigma$ is an isomorphism.
\end{pr}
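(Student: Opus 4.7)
The plan is to present both $\rH_1(\overline{P}^{\natural}_{\kbar}, R)$ and $\mathcal{F}_R \pi_1^{\ell}(\Pres \Xsing_{\kbar}, \mathcal{E})$ as extensions of the same form
\[
0 \to G \to \bullet \to \ker\Bigl(\bigoplus_{\bound(\kbar)} R \xrightarrow{\Sigma} R\Bigr) \to 0,
\]
where $G := \rH_1(\Jac^0 \widetilde{X}_{\kbar}, R)$, and then to check that $\sigma$ is compatible with these presentations on the outer terms, so that the five lemma yields the isomorphism. Since Theorem~\ref{Prop_groupoid_to_H1} and the projective bundle $\PtoPic$ already identify $\rH^1(X_{\kbar}, R(1))$ with $\mathcal{F}_R \pi_1^{\ell}(\Pres \Xsing_{\kbar}, \mathcal{E})$, this reduces the problem to showing that \eqref{FrpiPtoH1J} is an isomorphism.

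First, I would apply the Mayer--Vietoris sequence of Appendix~\ref{appendix_MV} to the pushout square \eqref{Jnatural_kbar_CD}; this is applicable since $\hat{\epsilon}_{\kbar}$ is a closed immersion and $\coprod \operatorname{id}$ is finite. Using the projective bundle $\PtoPic$ together with the homotopy exact sequence \cite[X, Corollary~1.4]{sga1} to identify $\rH_1(\Pres \Xsing_{\kbar}, R) \cong G$, noting that each section $\hat{\epsilon}_{\widetilde{x}}$ composed with $\PtoPic$ is a translation of Picard schemes and hence induces the identity on $\rH_1$ under this identification, and using that all spaces in sight are geometrically connected, the Mayer--Vietoris sequence collapses to the four-term exact sequence displayed above with $\bullet = \rH_1(\overline{P}^{\natural}_{\kbar}, R)$. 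The analogous presentation on the groupoid side follows from the functorial four-term exact sequence \eqref{groupoidSES} applied to $\pi_1^{\ell}(\Pres \Xsing_{\kbar}, \mathcal{E})$, once more using $\mathcal{F}_R \pi_1^{\ell}(\Pres \Xsing_{\kbar}) \cong G$.

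Finally, I would verify that \eqref{FrpiPtoH1J} gives a map of these two short exact sequences restricting to the identity on both outer terms. The identity on $G$ follows from the functoriality of the Hurewicz map and the commutativity of the pushout square, which expresses both inclusions of $G$ as arising from the two maps into $\overline{P}^{\natural}_{\kbar}$ from $\Pres \Xsing_{\kbar}$ and $\Jac^0 \widetilde{X}_{\kbar}$. The hard part of the argument will be matching the Mayer--Vietoris connecting map with the target-minus-source map of \eqref{groupoidSES}: a path in $\Pres \Xsing_{\kbar}$ from $\hat{\epsilon}_{\widetilde{x}_s}(e)$ to $\hat{\epsilon}_{\widetilde{x}_t}(e)$, pushed forward to a loop in $\overline{P}^{\natural}_{\kbar}$ via the pushout, is already a chain in $\Pres \Xsing_{\kbar}$, and its Mayer--Vietoris boundary in $\rH_0(\coprod_{\bound(\kbar)} \Jac^0 \widetilde{X}_{\kbar}, R) = \bigoplus_{\bound(\kbar)} R$ should be $[\widetilde{x}_t] - [\widetilde{x}_s]$. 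This is essentially the defining content of the connecting homomorphism for this pushout, but verifying it requires unwinding the construction of the Mayer--Vietoris sequence in Appendix~\ref{appendix_MV}; once this is in hand, the five lemma completes the proof.
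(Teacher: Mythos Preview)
Your proposal is correct and follows essentially the same approach as the paper: apply Mayer--Vietoris to the pushout square \eqref{Jnatural_kbar_CD}, simplify using that each section of $\hat{\epsilon}_{\kbar}$ composed with $\PtoPic$ is a translation (hence induces the same isomorphism on $\rH_*$), and then compare the resulting four-term exact sequence with \eqref{groupoidSES} to conclude via the five lemma that \eqref{FrpiPtoH1J} is an isomorphism. The paper's proof is terser on the compatibility check you flag as the ``hard part''---it simply asserts that the conclusion follows from \eqref{H1JbarMES}, the definition of the Mayer--Vietoris sequence, and \eqref{groupoidSES}---so your more explicit treatment of the connecting map is a welcome elaboration rather than a departure.
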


\begin{proof}
The Mayer--Vietoris sequence (see Appendix~\ref{appendix_MV}) corresponding to \eqref{Jnatural_kbar_CD}  gives the exact sequence of $\Gal(\kbar/k)$-modules \begin{align*} &\bigoplus_{\bound(\kbar)} \rH_1( \Jac^{0} \widetilde{X}_{\kbar}, R) \to \rH_1( \Jac^{0} \widetilde{X}_{\kbar}, R )  \bigoplus \rH_1( \Pres \Xsing_{\kbar},R) \to \rH_1 (\overline{P}^{\natural}_{\kbar}, R) \\ \to  & \bigoplus_{\bound(\kbar)} \rH_0( \Jac^{0} \widetilde{X}_{\kbar}, R ) \to  \rH_0( \Jac^{0} \widetilde{X}_{\kbar}, R ) \bigoplus \rH_0( \Pres \Xsing_{\kbar}, R) \to  \rH_0 (\overline{P}^{\natural}_{\kbar}, R) \end{align*} by Theorem~\ref{homology_MV_appendix}.

Let $* = 0$ or $1$. Applying $\rH_*$ to $\hat{\epsilon}_{\kbar}$ is the $\bound(\kbar)$-fold coproduct of a fixed isomorphism, as can be seen by noting that $\rH_*( \Pres \Xsing_{\kbar}, R) \to \rH_*(\Jac^{-1} \widetilde{X}_{\kbar}, R)$ is an isomorphism since $\Pres \Xsing_{\kbar} \to \Jac^{-1} \widetilde{X}_{\kbar}$ is a projective bundle, and $\rH_* (L \mapsto L \otimes I_{\widetilde{x}} : \Jac^{0} \widetilde{X}_{\kbar} \to \Jac^{-1} \widetilde{X}_{\kbar})$ is an isomorphism. 

This gives the exact sequence $$0 \to \rH_1( \Pres \Xsing_{\kbar},R) \to  \rH_1 (\overline{P}^{\natural}_{\kbar}, R) \to   \bigoplus_{\bound(\kbar)} \rH_0( \Jac^0 \widetilde{X}_{\kbar}, R ) \to  \rH_0( \Jac^0 \widetilde{X}_{\kbar}, R ) \to 0 $$ where the map $$\bigoplus_{\bound(\kbar)} \rH_0( \Jac^0 \widetilde{X}_{\kbar}, R) \to  \rH_0( \Jac^0 \widetilde{X}_{\kbar}, R )$$ is the $\bound(\kbar)$-fold coproduct of the identity map.
 
Identifying $\rH_0( \Jac^0 \widetilde{X}_{\kbar}, R )$ with $R$, we obtain \begin{equation}\label{H1JbarMES} 0 \to \rH_1( \Pres \Xsing_{\kbar},R ) \to  \rH_1 (\overline{P}^{\natural}_{\kbar}, R) \to   \bigoplus_{\bound(\kbar)} R \to  R \to 0 .\end{equation}

By \eqref{H1JbarMES}, the definition of the Mayer--Vietoris sequence, and \eqref{groupoidSES}, it follows that \eqref{FrpiPtoH1J} is an isomorphism, proving the proposition. 
\end{proof}


If $\widetilde{X}_{\kbar}$ is genus $0$, we have that $\Pic^0 \widetilde{X}_{\kbar} \cong \Spec \kbar$. By the Mayer--Vietoris sequence (Theorem~\ref{homology_MV_appendix}) corresponding to \eqref{Jnatural_kbar_CD}, we have an exact sequence $$0 \to \rH_1(\Pres \Xsing_{\kbar}, R) \to \rH_1(\Jacbar^{-1} \Xsing_{\kbar},R) \to \oplus_{\bound(\kbar)} R \to R \to 0.$$ By Proposition~\ref{Prop: PresExists},  the presentation scheme is a projective bundle over $\Spec \kbar$, whence $\rH_1(\Pres \Xsing_{\kbar}, R) =0$, giving the desired isomorphism in this case by Remark~\ref{H1for_genus0_tildeX}.

\section{The Abel map gives Poincar\'e duality}\label{sectionAJPD}

In this section we prove the main theorem that the Altman-Kleiman Abel map realizes Poincar\'e duality.

Let $\Tr : \rH^2(\Xsing_{\kbar}, R(1)) \cong \rH^2_c(X_{\kbar}, R(1)) \to R$ denote the trace map, sending the class of a point to $1$ \cite[Cycle 2.1.5]{sga4andhalf}.

Let $\wp:  \rH_1(\Xsing_{\kbar}, R) \to \rH^1(X_{\kbar}, R(1))$ be the Poincar\'e duality isomorphism characterized by $\Tr (\gamma \cup \wp(\lambda) ) = \langle \lambda, \gamma \rangle$ for all $\lambda$ in $ \rH_1(\Xsing_{\kbar}, R)$ and $\gamma$ in $\rH^1(\Xsing_{\kbar}, R) \cong \rH^1_c(X_{\kbar}, R)$, where $ \langle -, - \rangle$ denotes the tautological pairing between $\rH_1$ and $\rH^1$, defined: if $l$ in $\pi_1^{\ell}(\Xsing_{\kbar})$ represents $\lambda$, then $l$ acts by addition by $ \langle \lambda, \gamma \rangle$ on the fiber of the torsor classified by $\gamma$.

\begin{tm}	\label{minus_wp_is_sigma_inverse_H1_Abel}
	$- \wp = \sigma^{-1} \rH_1(\Abel_{\kbar}).$
\end{tm}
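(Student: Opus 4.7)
The plan is to reduce the identity to the classical Poincar\'e duality statement \eqref{Eqn: PDForProper} for the smooth proper curve $\widetilde{X}$ by comparing the Mayer--Vietoris homology sequences of the two pushout squares that describe $\Xsing$ and $\Jacbar^{-1}\Xsing$. Both sides of the asserted equality are $\Gal(\kbar/k)$-equivariant, so it suffices to work after base change to $\kbar$, and by Theorem~\ref{Proposition: UnivesalHomeo} we may replace $\Jacbar^{-1}\Xsing_{\kbar}$ by $\overline{P}^{\natural}_{\kbar}$ throughout. The lifted Abel map $\AbelLift$ of Definition~\ref{df_lifted_Abel_map}, the identity section $\Spec\kbar \to \Jac^{0}\widetilde{X}_{\kbar}$, and the natural map $\partial X_{\kbar} \to f^{-1}(x_0)\times\Jac^{0}\widetilde{X}_{\kbar}$ assemble into a morphism from the pushout square \eqref{Diagram: Curvepush-out} to \eqref{Jnatural_kbar_CD}, inducing $\Abel_{\kbar}$ on pushouts. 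Applying the Mayer--Vietoris sequence (Theorem~\ref{homology_MV_appendix}) and using $\rH_1(\partial X_{\kbar}) = 0 = \rH_1(\Spec\kbar)$ on top and \eqref{H1JbarMES} on the bottom produces a commutative diagram
\[
\begin{CD}
0 @>>> \rH_1(\widetilde{X}_{\kbar},R) @>{f_*}>> \rH_1(\Xsing_{\kbar},R) @>>> K @>>> 0 \\
@. @V{\rH_1(\AbelLift_{\kbar})}VV @V{\rH_1(\Abel_{\kbar})}VV @| @. \\
0 @>>> \rH_1(\Pres\Xsing_{\kbar},R) @>>> \rH_1(\overline{P}^{\natural}_{\kbar},R) @>>> K @>>> 0
\end{CD}
\]
with $K := \ker(\oplus_{\partial X(\kbar)} R \to R)$ and exact rows.

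The verification of $-\wp = \sigma^{-1}\rH_1(\Abel_{\kbar})$ then splits into a check on the image of $f_*$ and on the quotient $K$. On the image of $f_*$ I would invoke the classical identity \eqref{Eqn: PDForProper} for $\widetilde{X}$: the map $\widetilde{x}\mapsto\mathcal{O}_{\widetilde{X}}(\widetilde{x})$ realizes Poincar\'e duality on $\widetilde{X}$ on $\rH_1$, and the Altman--Kleiman convention $\AbelTilde:\widetilde{x}\mapsto I_{\widetilde{x}}$ differs from the classical map by the inversion automorphism of $\Jac^{0}\widetilde{X}$, which acts by $-1$ on $\rH_1$, so $\rH_1(\AbelTilde_{\kbar})$ realizes $-\wp_{\widetilde{X}}$. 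Naturality of Poincar\'e duality under the open immersion $X \hookrightarrow \widetilde{X}$ then yields $\wp_{\Xsing}\circ f_* = r \circ \wp_{\widetilde{X}}$, where $r$ is the restriction $\rH^{1}(\widetilde{X}_{\kbar},R(1)) \to \rH^{1}(X_{\kbar},R(1))$; and the identification $\sigma^{-1}(\rH_1(\Pres\Xsing_{\kbar},R)) = r(\rH^{1}(\widetilde{X}_{\kbar},R(1)))$ follows from Theorem~\ref{Prop_groupoid_to_H1} and the projective bundle isomorphism induced by $\PtoPic$, when compared with the exact sequence \eqref{XXtildeES}. Combining these reductions yields the asserted equality on the image of $f_*$.

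To verify equality on the quotient $K$, I would compute both maps explicitly on a class represented by an \'etale path in $\Xsing_{\kbar}$ joining two distinct geometric branches of $x_0$. Its image under $\rH_1(\Abel_{\kbar})$ is a path in $\overline{P}^{\natural}_{\kbar}$ whose class modulo $\rH_1(\Pres\Xsing_{\kbar},R)$ records the formal difference of the two branches in $K$; applying $\sigma^{-1}$ and unwinding the construction of Section~\ref{section_H1Jbar} expresses the corresponding element of $\rH^{1}(X_{\kbar},R(1))$ through the monodromy-to-cohomology maps \eqref{path_to_root} and \eqref{root_to_H1}. On the other side, $-\wp$ of the same class is computed via the defining identity $\Tr(\gamma\cup\wp(\lambda)) = \langle\lambda,\gamma\rangle$ paired against cohomology classes $\gamma$ supported on $\partial X$. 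The main obstacle is matching these two computations: the required identification is the analog of \cite[Dualit\'e, Propositions~3.2 and~3.4]{sga4andhalf} for the quotient curve $\Xsing$, and the delicate part is tracking the sign contributions arising from the $I_{\widetilde{x}}$-vs-$\mathcal{O}(\widetilde{x})$ convention built into $\Abel$, the orientation in $\Tr$, and the Mayer--Vietoris connecting homomorphism.
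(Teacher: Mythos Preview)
Your Mayer--Vietoris setup and the comparison diagram are correct and useful, but the logical strategy of ``checking on the image of $f_*$ and on the quotient $K$'' has a genuine gap. If two homomorphisms $\alpha,\beta \colon \rH_1(\Xsing_{\kbar},R) \to \rH^1(X_{\kbar},R(1))$ agree on the subgroup $f_*\rH_1(\widetilde{X}_{\kbar},R)$ and induce the same map on $K$, their difference factors through a homomorphism $K \to r(\rH^1(\widetilde{X}_{\kbar},R(1)))$, which need not vanish. Your own description of the $K$-step is ambiguous on this point: you say you will compute on a lift given by a path between branches, but then describe the result only ``modulo $\rH_1(\Pres\Xsing_{\kbar},R)$''. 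To close the argument you must compute both maps \emph{exactly} on such paths, not just their images in $K$.

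The paper avoids this trap by a different organization. Lemma~\ref{H1XsingisFRpi1Xtildebound} identifies $\rH_1(\Xsing_{\kbar},R)$ with $\mathcal{F}_R\pi_1^{\ell}(\widetilde{X}_{\kbar},\partial X(\kbar))$, so it suffices to check the identity on every path $\gamma$ in $\widetilde{X}_{\kbar}$ between points of $\partial X(\kbar)$ (loops included), treated uniformly rather than split into ``loop part'' and ``branch-difference part''. The actual computation---what you correctly flag as ``the main obstacle''---is then carried out via the class of the diagonal: Lemma~\ref{analogueProp3.4} expresses $-\wp(f_*\gamma)$ as monodromy along $\gamma$ of the torsor classified by the $(1,1)$ K\"unneth component $c^{1,1}$ of $\cl(\Delta)$; Lemma~\ref{gd=fc} identifies $f^*c^{1,1}$ with the restriction of the corresponding class $d^{1,1}$ on $\widetilde{X}\times\widetilde{X}$; and \cite[Dualit\'e, Proposition~3.2]{sga4andhalf} identifies the torsor of $d^{1,1}$ with $\Abel^*(+_a^* m_N)$, which is exactly what $\sigma^{-1}\rH_1(\Abel_{\kbar})$ produces via \eqref{path_to_root} and \eqref{root_to_H1}. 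Your reduction to classical Poincar\'e duality on $\widetilde{X}$ via \eqref{Eqn: PDForProper} handles only the loop case and does not by itself supply the matching on paths between distinct branches; the diagonal-class argument handles both at once and is where the sign $-1$ emerges transparently (from the cochain permutation in \eqref{Tr'toTr3eqn}).
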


To prove Theorem~\ref{minus_wp_is_sigma_inverse_H1_Abel}, we equip ourselves with three lemmas.

For a product $Y \times Z$, let $\operatorname{pr}_1: Y \times Z \to Y$ and $\operatorname{pr}_2: Y \times Z \to Z$ denote the projections. Let $N$ be a positive integer not divisible by the characteristic of $k$.

Let $g: X_{\kbar} \to \widetilde{X}_{\kbar}$ denote the open immersion, resulting in another open immersion $$g \times 1: X_{\kbar} \times X_{\kbar} \to \widetilde{X}_{\kbar} \times X_{\kbar}.$$  

The diagonal $\Delta$ of $X_{\kbar} \times X_{\kbar}$ defines a class $\cl(\Delta)$ in $\rH^2_{\Delta}(X_{\kbar} \times X_{\kbar}, \mu_N)$ by \cite[VI, Section~6, Theorem~6.1]{Milnebook}. Furthermore, $\Delta$ is closed in $\widetilde{X}_{\kbar} \times X_{\kbar}$, allowing us to apply excision \cite[III, Section~1, Proposition~1.27]{Milnebook} which defines an isomorphism $\rH^2_{\Delta}(X_{\kbar} \times X_{\kbar}, \mu_N) \cong \rH^2_{\Delta}(\widetilde{X}_{\kbar} \times X_{\kbar}, (g \times 1)_! \mu_N)$\hidden{the isomorphism $\rH^2_{\Delta}(X_{\kbar} \times X_{\kbar}, \mu_N) \cong \rH^2_{\Delta}(\widetilde{X}_{\kbar} \times X_{\kbar}, (g \times 1)_! \mu_N)$ follows from excision: the \'etale map $X_{\kbar} \times X_{\kbar} \to \widetilde{X}_{\kbar} \times X_{\kbar}$ has the property that $\Delta$ has closed image and the restriction of $g \times 1j$ to $\Delta$ is an isomorphism. Furthermore, $g \times 1(X_{\kbar} \times X_{\kbar} - \Delta) \subset \widetilde{X}_{\kbar} \times X_{\kbar} - \Delta$. Therefore by \cite[III Section~1, Proposition~1.27]{Milnebook}, $\rH^*_{\Delta}(\widetilde{X}_{\kbar} \times X_{\kbar}, (g \times 1)_! \mu_N) \cong \rH^* (X_{\kbar} \times X_{\kbar}, (g \times 1)^* (g \times 1)_! \mu_N) \cong \rH^* (X_{\kbar} \times X_{\kbar}, \mu_N)$}.  

The adjunction $(g \times 1)_! ,(g \times 1)^*$ and the map $\mu_N \to \operatorname{pr}_1^* g^* g_! \mu_N \cong (g \times 1)^* \operatorname{pr}_1^* g_! \mu_N$ on $X_{\kbar} \times X_{\kbar}$\hidden{induced by pulling back $\mu_N \to g^* g_! \mu_N$ on $X_{\kbar}$  by $\operatorname{pr}_1$} define a map $ (g \times 1)_! \mu_N \to \operatorname{pr}_1^* (g_! \mu_N)$ which is an isomorphism because it induces isomorphisms on all stalks. The isomorphisms $$(g \times 1)_! \mu_N \cong \operatorname{pr}_1^* (g_! \mu_N) \cong \operatorname{pr}_1^*(g_! \Z/N \otimes \mu_N) \cong \operatorname{pr}_1^*(g_! \Z/N ) \otimes \mu_N \cong \operatorname{pr}_1^*(g_! \Z/N ) \otimes \operatorname{pr}_2^* \mu_N$$ allow us to apply the K\"unneth formula to $\rH^*( \widetilde{X}_{\kbar} \times X_{\kbar}, (g \times 1)_! \mu_N)$, from which we obtain $$\rH^*(\widetilde{X}_{\kbar} \times X_{\kbar}, (g \times 1)_! \mu_N) \cong \rH^*(\widetilde{X}_{\kbar}, g_! \Z/n) \otimes \rH^*(X_{\kbar}, \mu_N) \cong \rH^*_c( X_{\kbar}, \Z/N) \otimes \rH^*(X_{\kbar}, \mu_N).$$  This allows us to speak of $(i,j)$ K\"unneth components of elements of $\rH^*(\widetilde{X}_{\kbar} \times X_{\kbar}, (g \times 1)_! \mu_N)$. 

Let $c^{1,1}$ be the $(1,1)$ K\"unneth component of the image of $\cl(\Delta)$ under $$\rH^2_{\Delta}(\widetilde{X}_{\kbar} \times X_{\kbar}, (g \times 1)_! \mu_N) \to \rH^2(\widetilde{X}_{\kbar} \times X_{\kbar}, (g \times 1)_! \mu_N).$$ We may view $c^{1,1}$ as an element of $\rH^1_c (X_{\kbar}, \rH^1(X_{\kbar}, \mu_N)) \cong \rH^1(\widetilde{X}_{\kbar}, g_! \Z/N) \otimes \rH^1(X_{\kbar}, \mu_N)$.

The diagonal $\widetilde{\Delta}$ of $\widetilde{X}_{\kbar} \times \widetilde{X}_{\kbar}$ determines a class $\cl(\widetilde{\Delta})$ in $\rH^2_{\widetilde{\Delta}}(\widetilde{X}_{\kbar} \times \widetilde{X}_{\kbar}, \mu_N)$. Let $d^{1,1}$ denote the $(1,1)$ K\"unneth component of the image of $\cl(\widetilde{\Delta})$ under $$\rH^2_{\widetilde{\Delta}}(\widetilde{X}_{\kbar} \times \widetilde{X}_{\kbar}, \operatorname{pr}_1^* \Z/N \otimes \operatorname{pr}_2^* \mu_N) \to \rH^2(\widetilde{X}_{\kbar} \times \widetilde{X}_{\kbar},\operatorname{pr}_1^* \Z/N \otimes \operatorname{pr}_2^* \mu_N),$$ which we view as an element of $\rH^1 (\widetilde{X}_{\kbar}, \rH^1(\widetilde{X}_{\kbar}, \mu_N))$.

Since there is a map of smooth pairs $(\Delta, X_{\kbar} \times X_{\kbar}) \to (\widetilde{\Delta}, \tilde{X}_{\kbar} \times \tilde{X}_{\kbar})$, we have \begin{equation}\label{clDelta_Deltatilde_pullback}(g \times g)^* \cl (\widetilde{\Delta}) = \cl (\Delta) \end{equation} by \cite[VI, Section~6, Theorem 6.1(c)]{Milnebook}. 

Our first lemma rewrites \eqref{clDelta_Deltatilde_pullback} in terms of $f: \widetilde{X} \to X^+$. The viewpoint is that $\rH^1_c (X_{\kbar}, \rH^1(X_{\kbar}, \mu_N))$ and $\rH^1 (\widetilde{X}_{\kbar}, \rH^1(\widetilde{X}_{\kbar}, \mu_N))$ classify certain torsors on $X^+_{\kbar}$ and $\widetilde{X}_{\kbar}$ respectively, and \eqref{clDelta_Deltatilde_pullback} computes the pullback of the torsor classified by $c^{1,1}$ under $f$ in terms of $d^{1,1}$. 

To describe pullback by $f: \widetilde{X} \to X^+$ more specifically, note that $f$ induces $$f^*: \rH^1(X^+_{\kbar}, \rH^1(X_{\kbar}, \mu_N)) \cong\rH^1_c(X_{\kbar}, \rH^1(X_{\kbar}, \mu_N)) \to \rH^1(\widetilde{X}_{\kbar}, \rH^1(X_{\kbar}, \mu_N)).$$ Equivalently, tensoring the map $g_! \Z/N \to \Z/N$ of sheaves on $\widetilde{X}$ with $\rH^1(X_{\kbar}, \mu_N)$ and applying $\rH^1(\widetilde{X}_{\kbar}, -)$ gives $f^*$. 

We introduce one last piece of notation. The open immersion $g: X \to \tilde{X}$ induces a map $$g^* :  \rH^1(\widetilde{X}_{\kbar}, \mu_N) \to \rH^1(X_{\kbar}, \mu_N),$$ and applying $\rH^1(\widetilde{X}_{\kbar},-)$ gives a map $$\rH^1(\widetilde{X}_{\kbar}, g^*): \rH^1(\widetilde{X}_{\kbar}, \rH^1(\widetilde{X}_{\kbar}, \mu_N)) \to  \rH^1(\widetilde{X}_{\kbar}, \rH^1(X_{\kbar}, \mu_N)).$$ 

\begin{lm}\label{gd=fc}
$$\rH^1(\widetilde{X}_{\kbar}, g^*) d^{1,1} = f^* c^{1,1} .$$
\end{lm}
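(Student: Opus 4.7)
The plan is to identify both $\rH^1(\widetilde{X}_{\kbar}, g^*) d^{1,1}$ and $f^* c^{1,1}$ with the $(1,1)$ K\"unneth component of a single class in $\rH^2(\widetilde{X}_{\kbar} \times X_{\kbar}, \mu_N)$, namely the image of the cycle class of $\Delta$ viewed as a closed subscheme of $\widetilde{X}_{\kbar} \times X_{\kbar}$. Two routes lead to this class: pulling back $\cl(\widetilde{\Delta})$ along the open immersion $1 \times g$, and passing $\cl(\Delta)$ through the forgetful map on coefficients $(g \times 1)_! \mu_N \to \mu_N$.

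First I would observe that $\Delta = (1 \times g)^{-1}(\widetilde{\Delta})$, and consequently $\Delta$ is closed in $\widetilde{X}_{\kbar} \times X_{\kbar}$. The morphism $1 \times g$ is a map of smooth pairs, so by the same result \cite[VI, Section~6, Theorem~6.1(c)]{Milnebook} used in the paper to establish \eqref{clDelta_Deltatilde_pullback}, we have $(1 \times g)^* \cl(\widetilde{\Delta}) = \cl_{\widetilde{X}_{\kbar} \times X_{\kbar}}(\Delta)$ in $\rH^2_\Delta(\widetilde{X}_{\kbar} \times X_{\kbar}, \mu_N)$, where the right-hand side denotes the cycle class of $\Delta$ as a smooth subscheme of $\widetilde{X}_{\kbar} \times X_{\kbar}$.

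Next I would verify that $\cl_{\widetilde{X}_{\kbar} \times X_{\kbar}}(\Delta)$ is also the image, under $\rH^2_\Delta(\widetilde{X}_{\kbar} \times X_{\kbar}, (g \times 1)_! \mu_N) \to \rH^2_\Delta(\widetilde{X}_{\kbar} \times X_{\kbar}, \mu_N)$, of the class defining $c^{1,1}$. The cokernel of $(g \times 1)_! \mu_N \to \mu_N$ is supported on $(\widetilde{X}_{\kbar} - X_{\kbar}) \times X_{\kbar}$, which is disjoint from $\Delta$, so this forgetful map becomes an isomorphism after taking local cohomology with support in $\Delta$. Under the excision identification $\rH^2_\Delta(\widetilde{X}_{\kbar} \times X_{\kbar}, \mu_N) \cong \rH^2_\Delta(X_{\kbar} \times X_{\kbar}, \mu_N)$, both the excision-derived class (used in the paper to define $c^{1,1}$) and $\cl_{\widetilde{X}_{\kbar} \times X_{\kbar}}(\Delta)$ correspond to the standard $\cl(\Delta)$. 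Combining with the previous paragraph, the images of $\cl(\Delta) \in \rH^2(\widetilde{X}_{\kbar} \times X_{\kbar}, (g \times 1)_! \mu_N)$ and $(1 \times g)^* \cl(\widetilde{\Delta}) \in \rH^2(\widetilde{X}_{\kbar} \times X_{\kbar}, \mu_N)$ coincide in $\rH^2(\widetilde{X}_{\kbar} \times X_{\kbar}, \mu_N)$.

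Finally I would extract K\"unneth $(1,1)$ components. Because $1 \times g$ is a product with $\operatorname{pr}_1 \circ (1 \times g) = \operatorname{pr}_1$ and $\operatorname{pr}_2 \circ (1 \times g) = g \circ \operatorname{pr}_2$, pullback by $1 \times g$ preserves the K\"unneth decomposition and acts on the $(i,j)$-summand of $\rH^i(\widetilde{X}_{\kbar}, \Z/N) \otimes \rH^j(\widetilde{X}_{\kbar}, \mu_N)$ as $\operatorname{id} \otimes g^*$, so the $(1,1)$-component of $(1 \times g)^* \cl(\widetilde{\Delta})$ is $\rH^1(\widetilde{X}_{\kbar}, g^*) d^{1,1}$. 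The sheaf map $(g \times 1)_! \mu_N \to \mu_N$ is the pullback along $\operatorname{pr}_1$ of $g_! \Z/N \to \Z/N$ (tensored with $\operatorname{pr}_2^* \mu_N$), so it likewise respects K\"unneth and acts on the $(i,j)$-summand as $f^* \otimes \operatorname{id}$, by the very definition of $f^*$ given in the paper; hence it sends $c^{1,1}$ to $f^* c^{1,1}$. Matching the $(1,1)$-components of the common image yields the desired equality. The main obstacle in the write-up is the bookkeeping of paragraph two---checking carefully that excision is compatible with the forgetful map of coefficient sheaves---but no new ideas beyond naturality of cycle classes, excision, and the K\"unneth formula are needed.
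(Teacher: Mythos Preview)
Your proposal is correct and follows essentially the same strategy as the paper: both arguments are diagram chases that start from $\cl(\widetilde{\Delta})$, invoke functoriality of cycle classes under pullback, excision, and naturality of the K\"unneth decomposition, and end by matching the two routes in $\rH^1(\widetilde{X}_{\kbar},\Z/N)\otimes\rH^1(X_{\kbar},\mu_N)$. The only cosmetic difference is that the paper routes the comparison through $\rH^2(X_{\kbar}\times X_{\kbar},\mu_N)$ via $(g\times g)^*$, whereas you go directly to $\rH^2(\widetilde{X}_{\kbar}\times X_{\kbar},\mu_N)$ via $(1\times g)^*$ together with the coefficient map $(g\times 1)_!\mu_N\to\mu_N$; this shortcut is valid precisely because, as you note, the cokernel of that coefficient map is supported away from $\Delta$.
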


\begin{proof}
We may assume $k = \kbar$. The following diagram is commutative: the top two rows commute by commutativity of boundary maps associated to the map of pairs $$(\Delta, X \times X) \to (\widetilde{\Delta}, \tilde{X} \times \tilde{X});$$ the second and third rows by naturality of the K\"unneth formula; the right trapezoid also by naturality of the K\"unneth formula; and the bottom triangle obviously: $$\xymatrix{ \rH^2_{\widetilde{\Delta}}(\widetilde{X} \times \widetilde{X}, \mu_N) \ar[r] \ar[d] & \ar[d]  \rH^2_{\Delta}(X \times X, \mu_N) \cong  \rH^2_{\Delta}(\widetilde{X} \times X, g_! \Z/N \boxtimes \mu_N) \ar[r] & \rH^2(\widetilde{X} \times X, g_! \Z/N \boxtimes \mu_N) \ar[dl] \ar[dd]_{\textrm{K\"unneth}} \\ \rH^2(\widetilde{X} \times \widetilde{X}, \mu_N) \ar[r] \ar[d]^{\textrm{K\"unneth}}& \rH^2(X \times X, \mu_N) \ar[d]^{\textrm{K\"unneth}}\\ \rH^1(\widetilde{X}, \Z/N) \otimes \rH^1(\widetilde{X}, \mu_N) \ar[r]^{g^* \otimes g^*} \ar[rrd]^{\operatorname{id} \otimes g^*}& \rH^1(X, \Z/N) \otimes \rH^1(X, \mu_N)& \rH^1(\widetilde{X}, g_! \Z/n) \otimes \rH^1(X, \mu_N) \ar[d]^{f^* \otimes \operatorname{id}} \\ & & \ar[ul] \rH^1(\widetilde{X}, \Z/N) \otimes  \rH^1(X, \mu_N)},$$ where the notation $(-) \boxtimes (-) $ is an abbreviation for $\operatorname{pr}_1^*(-) \otimes \operatorname{pr}_2^*(-)$. 

The image of $\operatorname{cl}(\widetilde{\Delta}) \in \rH^2_{\widetilde{\Delta}}(\widetilde{X} \times \widetilde{X}, \mu_N)$ under the top two horizontal morphisms followed by the right vertical morphism is $f^* c^{1,1}$. The image of $\operatorname{cl}(\widetilde{\Delta})$ under the left two vertical morphisms followed by the bottom diagonal morphism is $\rH^1(\widetilde{X}, g^*) d^{1,1},$ showing the proposition.
\end{proof}

The next lemma is a close analogue of \cite[Dualit\'e, Proposition~3.4]{sga4andhalf}, whose proof is almost identical, but we include it for completeness.  

The trace map $\Tr: \rH^2_c(X_{\kbar}, \mu_N) \to \Z/N$ defines a map $$\Tr' : \rH^1(\widetilde{X}_{\kbar}, g_! \Z/N) \otimes \rH^1(X_{\kbar}, \mu_N) \otimes \rH^1(X_{\kbar}, \mu_N) \to \rH^1(X_{\kbar}, \mu_N)$$ \begin{equation}\label{trace'def} a \otimes b \otimes c \mapsto \Tr(a \cup c) b.\end{equation}

\begin{lm}\label{analogueProp3.4}
For all $x \in \rH^1(X_{\kbar}, \mu_N)$, we have $\Tr' (-c^{1,1} \otimes x) = x$.
\end{lm}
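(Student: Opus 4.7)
My plan would follow the model of SGA 4.5, Dualit\'e, Proposition~3.4, adapted to the non-proper setting. The idea is to compute the pushforward $\operatorname{pr}_{2*}(\operatorname{pr}_1^* x \cup \cl(\Delta))$ in two ways: once using the defining property of the diagonal cycle class and the projection formula (which yields $x$), and once using the K\"unneth decomposition $\cl(\Delta) = c^{1,1} + c^{2,0}$ (which yields $\sum_i \Tr(x \cup a_i)\,b_i$, where $c^{1,1} = \sum_i \operatorname{pr}_1^* a_i \cup \operatorname{pr}_2^* b_i$). The sign $-1$ in the statement of the lemma arises from the Koszul sign $\Tr(x \cup a_i) = -\Tr(a_i \cup x)$ of the cup product on two degree-one classes, exactly as in the proper case.

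First I would view $x \in \rH^1(X_{\kbar}, \mu_N)$ as a class in $\rH^1(\widetilde{X}_{\kbar}, g_* \mu_N)$ via the Leray isomorphism, which is valid because $\partial X_{\kbar}$ is a finite disjoint union of geometric points. Then $\operatorname{pr}_1^* x$ makes sense on $\widetilde{X}_{\kbar} \times X_{\kbar}$, and a short coefficient calculation using $g_* \mu_N \otimes g_! \Z/N \cong g_! \mu_N$ shows that $\operatorname{pr}_1^* x \cup \cl(\Delta)$ lies in $\rH^3(\widetilde{X}_{\kbar} \times X_{\kbar}, (g \times 1)_! \mu_N^{\otimes 2})$. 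Pushing forward along the proper morphism $\operatorname{pr}_2$ lands the result in $\rH^1(X_{\kbar}, \mu_N)$ (after the canonical Thom identification).

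For the first computation, I would use the equality $\cl(\Delta) = \Delta_*(1)$ together with the projection formula and $\operatorname{pr}_1 \circ \Delta = g$ to obtain $\operatorname{pr}_1^* x \cup \cl(\Delta) = \Delta_*(\Delta^* \operatorname{pr}_1^* x) = \Delta_*(x)$; functoriality of the Gysin map and $\operatorname{pr}_2 \circ \Delta = \operatorname{id}_{X_{\kbar}}$ then give $\operatorname{pr}_{2*}(\operatorname{pr}_1^* x \cup \cl(\Delta)) = x$. For the second computation, decompose $\cl(\Delta) = c^{1,1} + c^{2,0}$ (the $(0,2)$ K\"unneth component vanishes because $\rH^2(X_{\kbar}, \mu_N) = 0$), and apply the projection formula for the proper pushforward $\operatorname{pr}_{2*}$ to the expansion $\operatorname{pr}_1^* x \cup c^{1,1} = \sum_i \operatorname{pr}_1^*(x \cup a_i) \cup \operatorname{pr}_2^* b_i$, yielding $\sum_i \Tr(x \cup a_i)\,b_i$. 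The $c^{2,0}$ contribution vanishes because $x \cup T$ lives in $\rH^3(\widetilde{X}_{\kbar}, g_! \mu_N) = \rH^3_c(X_{\kbar}, \mu_N) = 0$. Equating the two computations and applying graded commutativity completes the proof.

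The main technical obstacle is the careful bookkeeping of coefficient sheaves and Thom twists: verifying $(g \times 1)_! \mu_N \cong \operatorname{pr}_1^*(g_! \Z/N) \otimes \operatorname{pr}_2^* \mu_N$ and $g_* \mu_N \otimes g_! \Z/N \cong g_! \mu_N$, and tracking the canonical identification of $\operatorname{pr}_{2*}(\operatorname{pr}_1^* x \cup \cl(\Delta))$ with $x$ itself in $\rH^1(X_{\kbar}, \mu_N)$ under the combination of the Gysin Thom class for $\Delta$ and the trace along the fiber $\widetilde{X}_{\kbar}$. Once these identifications are in place, the identity falls out of the projection formula, the defining property of $\cl(\Delta)$, and graded commutativity of cup product.
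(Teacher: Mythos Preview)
Your overall strategy---projection formula, K\"unneth decomposition of $\cl(\Delta)$, the identity $\operatorname{pr}_1|_\Delta = \operatorname{pr}_2|_\Delta$, and the Koszul sign---is exactly the paper's approach, which in turn follows SGA~4\nicefrac{1}{2}, Dualit\'e, Proposition~3.4. There is, however, a genuine gap in your coefficient bookkeeping. The claimed Leray isomorphism $\rH^1(X_{\kbar},\mu_N)\cong\rH^1(\widetilde{X}_{\kbar},g_*\mu_N)$ is false: for the open immersion $g$ of a smooth curve, the stalk of $R^1 g_*\mu_N$ at a point $\widetilde{x}\in\partial X$ is $\rH^1$ of the fraction field of the strict henselization, which is $\bbZ/N$ (Kummer theory on the punctured disk), not zero. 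So the Leray edge map is only an injection, and a general $x\in\rH^1(X_{\kbar},\mu_N)$ need not lift to $\rH^1(\widetilde{X}_{\kbar},g_*\mu_N)$. Consequently $\operatorname{pr}_1^* x$ cannot be defined on $\widetilde{X}_{\kbar}\times X_{\kbar}$ the way you propose.

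The paper sidesteps this by never moving $x$ to $\widetilde{X}$. It keeps $\operatorname{pr}_1^* x$ as a class in $\rH^1(X_{\kbar}\times X_{\kbar},\mu_N)$ and uses the cup product of SGA~4\nicefrac{1}{2}, Cycle~1.2.4, pairing $\rH^2(\widetilde{X}_{\kbar}\times X_{\kbar},(g\times 1)_!\mu_N)$ with $\rH^1(X_{\kbar}\times X_{\kbar},\mu_N)$ to land in $\rH^3(\widetilde{X}_{\kbar}\times X_{\kbar},(g\times 1)_!\mu_N\otimes\mu_N)$. This is precisely the pairing of compactly supported cohomology of the open with ordinary cohomology of the open. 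Instead of $\operatorname{pr}_{2*}$, the paper defines a ``trace'' $\Tr_3$ that extracts the $(2,1)$ K\"unneth component and applies the fiber trace; the diagonal step is then the swap $\operatorname{pr}_1^* x\rightsquigarrow\operatorname{pr}_2^* x$ via the supported cup product of Cycle~1.2.2.2, after which the $(2,0)$ K\"unneth component of $\cl(\Delta)$ traces to $1$. If you replace your Leray step by this pairing, the rest of your outline goes through verbatim.
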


\begin{proof}
We may assume $k = \kbar$. The above isomorphism $(g \times 1)_! \mu_N \cong \operatorname{pr}_1^* g_! \mu_N$ induces an isomorphism $(g \times 1)_! \mu_N \otimes \mu_N \cong g_! \mu_N \boxtimes \mu_N$ of sheaves on $\widetilde{X} \times X$. We obtain a K\"unneth formula for $\rH^*(\tilde{X} \times X, (g \times 1)_! \mu_N \otimes \mu_N) \cong \rH^*(\widetilde{X}, g_! \mu_N) \otimes \rH^*(X, \mu_N)$, allowing us to define $\Tr_3$ $$\xymatrix{\Tr_3: \rH^3(\widetilde{X} \times X, (g \times 1)_! \mu_N \otimes \mu_N) \cong \rH^3(\widetilde{X} \times X,  g_! \mu_N \boxtimes \mu_N)\ar[d] \\ \rH^2(\widetilde{X}, g_!  \mu_N ) \otimes \rH^1(X, \mu_N) \ar[d] \\  \rH^1(X, \mu_N)}$$ which takes the $(2,1)$ K\"unneth component and then applies trace. 

There is a cup product \cite[Cycle~1.2.4]{sga4andhalf}$$\rH^2(\widetilde{X} \times X, (g \times 1)_! \mu_N) \times \rH^1(X \times X, \mu_N) \to \rH^3(\widetilde{X} \times X, (g \times 1)_! \mu_N \otimes \mu_N).$$ Note that \begin{equation}\label{Tr'toTr3eqn}\Tr' (-c^{1,1} \otimes x) =  -\Tr_3(-c^{1,1} \cup \operatorname{pr}_1^* x) = \Tr_3(c^{1,1} \cup  \operatorname{pr}_1^* x),\end{equation} where the minus sign after the first equality comes from permuting cochains of degree $1$ in \eqref{trace'def}.

Since $c^{1,1} \cup \operatorname{pr}_1^* x$ and $\operatorname{cl}(\Delta) \cup \operatorname{pr}_1^* x$ have the same $(2,1)$ K\"unneth component, \begin{equation}\label{c11toclDelta}  \Tr_3(c^{1,1} \cup  \operatorname{pr}_1^* x) =  \Tr_3(\operatorname{cl}(\Delta) \cup  \operatorname{pr}_1^* x).\end{equation}

The cup product on the right hand side may be reinterpreted as the cup product $$ \rH^2_{\Delta}( \widetilde{X} \times X,(g \times 1)_! \mu_N) \times \rH^1 (\Delta, \mu_N) \to \rH^3_{\Delta}(\widetilde{X} \times X,  (g \times 1)_! \mu_N \otimes \mu_N )$$ defined in \cite[Cycle~1.2.2.2]{sga4andhalf}.

Since $\operatorname{pr}_1 = \operatorname{pr}_2$ when restricted to $\Delta$, we therefore have \begin{equation}\label{swap_p1p2} \Tr_3(\operatorname{cl}(\Delta) \cup  \operatorname{pr}_1^* x) =  \Tr_3(\operatorname{cl}(\Delta) \cup  \operatorname{pr}_2^* x) = \Tr_4 (\operatorname{cl}(\Delta)) \cup x = x,\end{equation} where $$\Tr_4: \rH^2(\widetilde{X} \times X, (g \times 1)_! \mu_N) \cong \rH^2(\widetilde{X} \times X, g_! \mu_N \boxtimes \Z/N) \to \Z/N $$ takes the $(2,0)$ K\"unneth component and applies $\Tr$.

Combining \eqref{Tr'toTr3eqn}, \eqref{c11toclDelta}, and \eqref{swap_p1p2} completes the proof.
\end{proof}

Recall the notation $\mathcal{F}_R$ for the free $R$-module on a groupoid from Section~\ref{Section:CohXfromPic}. The next lemma identifies $\rH_1(\Xsing_{\kbar}, R)$ in terms of the free $R$-module associated to paths in $\widetilde{X}_{\kbar}$ between geometric points of $\partial X$.

\begin{lm}\label{H1XsingisFRpi1Xtildebound}
The map $\mathcal{F}_R  \pi_1^{\ell}(\widetilde{X}_{\kbar}, \partial X(\kbar)) \to \rH_1(\Xsing_{\kbar}, R)$ induced by $$ \pi_1^{\ell}(\widetilde{X}_{\kbar}, \partial X(\kbar)) \stackrel{f_*}{\to} \pi_1^{\ell}(\Xsing_{\kbar}, \overline{x}_0) \to \rH_1(\Xsing_{\kbar}, R)$$ is an isomorphism. 
\end{lm}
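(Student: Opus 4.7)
The plan is to identify both sides as the middle term of a canonical four-term exact sequence and then to apply the Five Lemma. The first sequence arises from the homological Mayer--Vietoris of Appendix~\ref{appendix_MV} applied to the base change to $\kbar$ of the pushout diagram~\eqref{Diagram: Curvepush-out}; its hypotheses are met since $\partial X_{\kbar} \to \widetilde{X}_{\kbar}$ is a closed immersion and $\partial X_{\kbar} \to \Spec \kbar$ is finite. Using that $\rH_1$ vanishes on the zero-dimensional schemes $\partial X_{\kbar}$ and $\Spec \kbar$, that $\rH_0(\partial X_{\kbar}, R) \cong \bigoplus_{\partial X(\kbar)} R$, and that $\rH_0 \cong R$ for the connected schemes involved, the long Mayer--Vietoris sequence collapses to
\[
0 \to \rH_1(\widetilde{X}_{\kbar}, R) \stackrel{f_*}{\to} \rH_1(\Xsing_{\kbar}, R) \stackrel{\delta}{\to} \bigoplus_{\partial X(\kbar)} R \stackrel{\Sigma}{\to} R \to 0,
\]
where $\Sigma$ is the summation map.

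The second sequence is \eqref{groupoidSES} applied to $\pi = \pi_1^{\ell}(\widetilde{X}_{\kbar}, \partial X(\kbar))$ with $D = \partial X(\kbar)$. Using the paper's definition of $\rH_1$ to identify $\mathcal{F}_R$ of the loop subgroupoid at a chosen basepoint $a \in \partial X(\kbar)$ with $\rH_1(\widetilde{X}_{\kbar}, R)$, this reads
\[
0 \to \rH_1(\widetilde{X}_{\kbar}, R) \to \mathcal{F}_R \pi_1^{\ell}(\widetilde{X}_{\kbar}, \partial X(\kbar)) \stackrel{t-s}{\to} \bigoplus_{\partial X(\kbar)} R \stackrel{\Sigma}{\to} R \to 0.
\]
The map of the lemma serves as the central vertical arrow of a morphism between these two sequences, with the three outer vertical arrows taken to be identities. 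Commutativity of the leftmost square is straightforward: the restriction of $f_* \colon \pi_1^{\ell}(\widetilde{X}_{\kbar}, \partial X(\kbar)) \to \pi_1^{\ell}(\Xsing_{\kbar}, \overline{x}_0)$ to loops at $a$ is the functorial map $f_*$ on fundamental groups, which after passing through $\mathcal{F}_R$ agrees with the Mayer--Vietoris inclusion.

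The main obstacle, and the only point requiring genuine care, is commutativity of the right square: one must show that the Mayer--Vietoris connecting homomorphism $\delta$ sends the class of a loop $f_* l$ arising from a path $l \colon a \to b$ in $\widetilde{X}_{\kbar}$ to $[b] - [a] \in \bigoplus_{\partial X(\kbar)} R$. I would establish this by unwinding the construction of $\delta$ from Appendix~\ref{appendix_MV}: the natural lift of the loop $f_* l$ across the pushout is the path $l$ itself on the $\widetilde{X}_{\kbar}$ summand and a constant on the $\Spec \kbar$ summand, and the Mayer--Vietoris boundary records the difference of endpoints, namely $[b] - [a]$. With this compatibility in hand, the Five Lemma immediately yields that the central vertical arrow is an isomorphism.
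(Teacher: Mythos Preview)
Your proposal is correct and follows essentially the same approach as the paper: both proofs place the map of the lemma between the four-term exact sequence coming from the homological Mayer--Vietoris of Appendix~\ref{appendix_MV} and the groupoid sequence~\eqref{groupoidSES}, then invoke the Five Lemma. The paper simply asserts commutativity of the comparison diagram, whereas you go slightly further in sketching why the connecting map $\delta$ matches $t-s$; note, though, that in the paper the boundary $\delta$ is defined by dualizing the cohomological boundary rather than by a ``lifting'' construction, so a literal verification would unwind that definition instead of the topological picture you describe.
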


\begin{proof}
The Mayer--Vietoris sequence (Theorem~\ref{homology_MV_appendix} of Appendix~\ref{appendix_MV}) corresponding to $$\begin{CD}
					\bound_{\kbar}	 				@>>> 				\widetilde{X}_{\kbar} \\
					@VVV										@VfVV \\
					\Spec \kbar 					@>\overline{x}_{0}>>	\Xsing_{\kbar}
				\end{CD}$$ gives the exact sequence of $\Gal(\kbar/k)$-modules \begin{align*}\oplus_{\partial X(\kbar)} \rH_1(\Spec \kbar, R) \to \rH_1( \Spec \kbar, R) \oplus \rH_1(\widetilde{X}_{\kbar}, R) \to \rH_1(X^+_{\kbar}, R) \\ \to  \oplus_{\partial X(\kbar)} \rH_0(\Spec \kbar, R) \to \rH_0( \Spec \kbar, R) \oplus  \rH_0(\widetilde{X}_{\kbar}, R) \to \rH_0(X^+_{\kbar}, R), \end{align*} which we can rewrite as $$0 \to \rH_1(\widetilde{X}_{\kbar}, R) \to \rH_1(X^+_{\kbar}, R) \to \oplus_{\partial X(\kbar)} R \to R \to 0$$ using the isomorphisms $\rH_1(\Spec \kbar, R) = 0$, $\rH_0(\Spec \kbar, R) \cong \rH_0(\widetilde{X}_{\kbar}, R) \cong R$, and noting that the map $$\oplus_{\partial X(\kbar)} \rH_0(\Spec \kbar, R) \to \rH_0(\Spec \kbar, R) \oplus \rH_0(\widetilde{X}_{\kbar}, R)$$ is identified with the coproduct over $\partial X(\kbar)$ of the diagonal map $R \to R \oplus R$.

Associated to $\mathcal{F}_R \pi_1^{\ell}(\widetilde{X}_{\kbar}, \partial X)$ is the exact sequence \eqref{groupoidSES} $$0 \to \mathcal{F}_R \pi_1^{\ell}(\widetilde{X}_{\kbar}, \overline{x}_0) \to \mathcal{F}_R \pi_1^{\ell}(\widetilde{X}_{\kbar}, \partial X) \to \oplus_{\partial X_{\kbar}} R \to R \to 0,$$ which is compatible with the Mayer--Vietoris sequence in the sense that the diagram $$\xymatrix{0 \ar[r] & \rH_1(\widetilde{X}_{\kbar}, R) \ar[r] & \rH_1(X^+_{\kbar}, R) \ar[r] & \oplus_{\partial X(\kbar)} R \ar[r] & R \ar[r] & 0 \\ 0 \ar[r] & \ar[u] \mathcal{F}_R \pi_1^{\ell}(\widetilde{X}_{\kbar}, \overline{x}_0) \ar[r] & \mathcal{F}_R \pi_1^{\ell}(\widetilde{X}_{\kbar}, \partial X) \ar[r] \ar[u] & \oplus_{\partial X(\kbar)} R \ar[u]^{\operatorname{id}} \ar[r] & R \ar[u]^{\operatorname{id}} \ar[r] & 0} $$ commutes.

Since $ \mathcal{F}_R \pi_1^{\ell}(\widetilde{X}_{\kbar}, \overline{x}_0) \to \rH_1(\widetilde{X}_{\kbar}, R)$ is an isomorphism, $\mathcal{F}_R \pi_1^{\ell}(\widetilde{X}_{\kbar}, \partial X) \to \rH_1(X^+_{\kbar}, R)$ is also an isomorphism.
\end{proof}

We now give the proof of Theorem~\ref{minus_wp_is_sigma_inverse_H1_Abel}.

\begin{proof}(of Theorem~\ref{minus_wp_is_sigma_inverse_H1_Abel}.)
Suppose $R = \Z/ \ell^n$, and let $N = \ell^n$. By definition of the cohomology of an $\ell$-adic sheaf \cite[p.~163--164]{Milnebook}, it suffices to prove the theorem in this case.

Since $\wp$, $\sigma$, and $\rH_1(\Abel_{\kbar})$ are $R$-module morphisms, by Lemma~\ref{H1XsingisFRpi1Xtildebound} it suffices to show that $$- \wp (f_* \gamma)= \sigma^{-1} \rH_1(\Abel_{\kbar})(f_*\gamma)$$ for $\gamma \in \pi_1^{\ell}(\widetilde{X}_{\kbar}, \bound)$.

By Lemma~\ref{analogueProp3.4}, $$- \wp (f_* \gamma) = \Tr'(c^{1,1} \otimes \wp (f_* \gamma)).$$ 

By the definition of $\wp$, $$\Tr'(c^{1,1} \otimes \wp (f_* \gamma)) = \langle f_* \gamma , c^{1,1}\rangle.$$ 

$c^{1,1}$ classifies a $\rH^1(X_{\kbar}, R(1))$-torsor, which we will denote $Y \to \Xsing_{\kbar}$. As in Section~\ref{Section:CohXfromPic}, let $\phi_{x}(T)$ denote the fiber of a torsor $T$ at a geometric point $x$. Pulling back $Y$ by $f$ gives a $\rH^1(X_{\kbar}, R(1))$-torsor $f^* Y$ with isomorphisms $\phi_{\tilde{x}} f^* Y \cong \phi_{\overline{x}_0} Y$ for all $\tilde{x} \in \bound (\kbar)$, allowing us to speak of the monodromy of $f^*Y$ along $\gamma$ as an element $\langle \gamma, f^*c^{1,1} \rangle$ in $\rH^1(X_{\kbar}, R(1))$. Furthermore, $$ \langle f_* \gamma , c^{1,1}\rangle =  \langle \gamma, f^*c^{1,1} \rangle.$$

By Lemma~\ref{gd=fc}, $f^* Y$ is classified by $\rH^1(\widetilde{X}_{\kbar}, g^*) d^{1,1}$ in $\rH^1(\widetilde{X}_{\kbar}, \rH^1(X_{\kbar}, R(1)))$, which allows us to write $$\langle \gamma, f^* c^{1,1}\rangle = \langle \gamma, \rH^1(\widetilde{X}_{\kbar}, g^*) d^{1,1}\rangle, $$ giving \begin{equation}\label{pf*gamma_as_monodromy} - \wp (f_* \gamma) = \langle \gamma, \rH^1(\widetilde{X}_{\kbar}, g^*) d^{1,1}\rangle \end{equation} by combining with the three previous equations. 

Now consider $\sigma^{-1} \rH_1(\Abel_{\kbar})(f_*\gamma)$. By the construction of the lifted Abel map (Definition~\ref{df_lifted_Abel_map}\hidden{see paragraph immediately following the definition}), $$\PtoJbar \circ \AbelLift = \Abel \circ f,$$ whence $\rH_1(\Abel_{\kbar})(f_*\gamma)$ is the element of homology represented by $${\PtoJbar}_* \circ \AbelLift_* \gamma \in \pi_1^{\ell} (\Jacbar^{-1} \Xsing,\Abel_* \overline{x}_0).$$

Consider an element of the homology group $\rH_1(\Jacbar^{-1} \Xsing_{\kbar}, R)$ represented by an element of $\pi_1^{\ell}(\Jacbar^{-1} \Xsing_{\kbar},\Abel_* \overline{x}_0)$ which is the image under $\PtoJbar$ of a path in $\Pres \Xsing_{\kbar}$. The image under $\sigma^{-1}$ of such an element  is particularly easy to evaluate. Here is the resulting description of $\sigma^{-1}{\PtoJbar}_* \circ \AbelLift_* \gamma$. As in the proof of Theorem~\ref{Prop_groupoid_to_H1}, let $a$ be a geometric point of $\Jac^1 \widetilde{X}_{\kbar}$, defining $+_a: \Jac^{-1} \widetilde{X}_{\kbar} \to \Jac^0 \widetilde{X}_{\kbar}$. Let $m_{N}: \Jac^0 \widetilde{X}_{\kbar} \to \Jac^0 \widetilde{X}_{\kbar}$ denote the finite \'etale cover given by multiplication by $N$. Let $\widetilde{x}_s$ and $\widetilde{x}_t$ in $\bound(\kbar)$ be the source and target of $\gamma$. Since $\gamma$ is a path, $\gamma$ induces a morphism between fibers $$\phi_{\widetilde{x}_s}(\Abel^* +_a^* m_N) \to \phi_{\widetilde{x}_t}(\Abel^* +_a^* m_N),$$ which is given by addition by an element $\langle \gamma , \Abel^* +_a^* m_N \rangle$ of $\phi_{\Abel_* \widetilde{x}_t - \Abel_* \widetilde{x}_s} [m_N]$.  By construction, $\sigma^{-1} {\PtoJbar}_* ( \AbelLift_* \gamma)$ is the image under \eqref{root_to_H1} of $\langle \gamma , \Abel^* +_a^* m_N \rangle$.

By \cite[Dualit\'e, Proposition~3.2]{sga4andhalf}, $d^{1,1}$ classifies the pullback of $m_N$ by $+_a \circ \Abel$. (The notation in loc.~cit.~is that $u$ is the pullback of $m_N$ by the negative of $+_a \circ \Abel$, whence the appearance of the sign.) Thus the torsor classified by $ \rH^1(\widetilde{X}_{\kbar}, g^*) d^{1,1}$ is $$(\Abel^* +_a^* m_N) \times_{\rH^1(\widetilde{X}_{\kbar}, R(1))} \rH^1(X_{\kbar}, R(1)),$$ which identifies the action of $\gamma$ on the fibers of $\rH^1(\widetilde{X}_{\kbar}, g^*) d^{1,1}$ with its action on the fibers of $d^{1,1}$. In particular, the map induced by $\gamma$ from the fiber of $\rH^1(\widetilde{X}_{\kbar}, g^*) d^{1,1}$ over $\widetilde{x}_s$ to the fiber over $\widetilde{x}_t$ is addition by $\langle \gamma , \Abel^* +_a^* m_N \rangle$. The isomorphisms $\phi_{\widetilde{x}_s} f^* Y \cong \phi_{\widetilde{x}_t} f^* Y$ are compatible with \eqref{root_to_H1}, giving that the image of  $\langle \gamma , \Abel^* +_a^* m_N \rangle$ under \eqref{root_to_H1} is $\langle\gamma, \rH^1(\widetilde{X}_{\kbar}, g^*) d^{1,1}\rangle$. Combining with \eqref{pf*gamma_as_monodromy} completes the proof.
\end{proof}

\appendix \section{Cohomology in terms of the fundamental group of $\Pic$}\label{appendixa}

In this appendix we prove Proposition~\ref{natl_iso_pi_1Pic=H1}, identifying $ \rH^1((-)_{\kbar}, \Z_{\ell}(1))$ with the $\ell$-\'etale fundamental group of the Picard scheme, as well as Proposition~\ref{Pic0_to_H^1_NS[ell^N]}, giving a similar description of $\rH^1((-)_{\kbar}, \Z/N(1))$.

\noindent{\bf Proposition~\ref{natl_iso_pi_1Pic=H1}.} {\em Let $k$ be a perfect field. There is a natural isomorphism of functors from proper, geometrically connected schemes over $k$ to $\Gal(\kbar/k)$-modules $$\pi_1^{\ell} (\Pic^0 (-)_{\kbar}, e) \cong \rH^1((-)_{\kbar}, \Z_{\ell}(1)),$$ for $\ell$ a prime not equal to the characteristic of $k$.}

\begin{rmk}
In the statement of Proposition~\ref{natl_iso_pi_1Pic=H1}, the $\ell$-\'etale fundamental group $\pi_1^{\ell}$ can be replaced by $\rH_1(-,\Z_{\ell})$.
\end{rmk}

Summarily, this proposition is proven by applying the Kummer exact sequence to obtain $\rH^1((-)_{\kbar}, \Z/\ell^n(1)) \cong \Pic (-)_{\kbar}[\ell^n],$ and then relating torsion points and $\pi_1^{\ell}$ for algebraic groups.

Let $k$ and $\ell$ be as above.  Let $p$ denote the characteristic of $k$, which could be $0$. $\rH^*$ denotes \'etale cohomology. By an {\em algebraic group}, we mean a connected, smooth $k$-group scheme. An algebraic group is automatically geometrically connected by \cite[$VI_A$, Proposition~2.4]{SGA3_8_11}.

\begin{lm} \label{Gkbardiv}
	Let $G$ be a commutative algebraic group over $k$, and $N$ an integer which is prime-to-$p$. Then the $\kbar$-points of $G$ form an $N$-divisible group.
\end{lm}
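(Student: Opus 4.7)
The plan is to show that the multiplication-by-$N$ morphism $[N] \colon G \to G$ is étale and surjective as a morphism of $k$-schemes; combining these two properties yields surjectivity on $\kbar$-points, since for any $y \in G(\kbar)$ the fiber $[N]^{-1}(y)$ is a non-empty étale $\kbar$-scheme and thus automatically admits a $\kbar$-point.

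To establish étaleness, I would compute the differential of $[N]$ at the identity $e$: since $G$ is commutative, $[N]$ is a group homomorphism, so its tangent map at $e$ is multiplication by $N$ on $\operatorname{Lie}(G)$. This map is invertible because $N$ is a unit in $k$, by the hypothesis $\gcd(N,p) = 1$. Combined with smoothness of $G$, this gives étaleness of $[N]$ at $e$. To propagate this to all of $G$, I would use the identity $[N] \circ t_g = t_{Ng} \circ [N]$, where $t_g$ denotes translation by $g$; since translations are automorphisms, étaleness at $e$ transports to étaleness at every point.

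For surjectivity as a morphism of schemes, note that étale morphisms are open, so the image $[N](G) \subseteq G$ is an open subscheme. Because $G$ is commutative, $[N](G)$ is a subgroup of $G$, and any open subgroup of a group scheme is also closed (its complement is a union of non-identity cosets, each a translate of the open subgroup and therefore open). Connectedness of $G$ then forces $[N](G) = G$, which combined with étaleness gives the desired surjectivity on $\kbar$-points.

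No serious obstacle stands out: the argument is essentially a Lie-theoretic computation followed by standard topological group reasoning. The one step deserving a little care is the invocation of connectedness together with the open-implies-closed fact for subgroups; this is where the convention that an algebraic group is connected and smooth (rather than merely a commutative group scheme of finite type) enters the proof.
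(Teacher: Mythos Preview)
Your proof is correct, but it proceeds quite differently from the paper's. The paper invokes the classification of commutative algebraic groups \cite[XVII, Theorem~7.2.1]{SGA3_8_11}: every such $G$ is built by successive extensions from $\G_m$, $\G_a$, and abelian varieties, and one checks $N$-divisibility of $\kbar$-points for each of these building blocks and observes that the property is closed under extensions. Your argument instead works directly on $G$: you show $[N]$ is \'etale by a tangent-space computation (using smoothness of $G$ and invertibility of $N$ in $k$), then deduce surjectivity from connectedness via the open-subgroup-is-closed trick. Your route is more self-contained and avoids the structure theorem entirely; the paper's route is shorter to state but leans on a substantial external result. Both are standard, and yours is arguably the more natural one for this isolated lemma.
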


\begin{proof}
The subcategory of groups whose $\kbar$-points are $N$-divisible is closed under extensions. The $\kbar$-points of $\G_m$, $\G_{a}$, and all abelian varieties are $N$-divisible. Thus the lemma follows by the classification of (connected) commutative, algebraic groups \cite[XVII, Theorem~7.2.1]{SGA3_8_11}. 
\end{proof}

Let $\pi_1^{(p')}$ denote the prime-to-$p$ \'etale fundamental group, and $\pi_1^{\ell}$ denote its maximal pro-$\ell$ quotient as above. By \cite[Proposition~1.1 and Remark~4.3]{Br_Sz}, it follows that: 

\begin{pr} \label{pi1comgrp}
	 If $G$ is a commutative algebraic group over $k$, then there is a natural isomorphism $\pi_{1}^{(p')}(G_{\kbar},e) \cong \varprojlim G[N](\bar{k})$, where $G[N]$ denotes the $N$-torsion and $N$ runs over positive integers which are prime to $p$.
\end{pr}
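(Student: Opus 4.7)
The plan is to exhibit the multiplication-by-$N$ maps $[N] \colon G \to G$, as $N$ ranges over positive integers prime to $p$, as a cofinal system of pointed Galois \'etale covers of $G_{\kbar}$ with deck groups $G[N](\kbar)$, and then pass to the inverse limit.

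First I would check that $[N]$ is a finite \'etale Galois cover. Since $G$ is a commutative algebraic group, the differential of $[N]$ at the identity is multiplication by $N$ on the tangent space, which is an isomorphism because $N$ is invertible in $k$; as $[N]$ is a group homomorphism, this forces $[N]$ to be \'etale at every point. Surjectivity of $[N]$ on $\kbar$-points is Lemma~\ref{Gkbardiv}, and combined with \'etaleness it forces the kernel $G[N]$ to be a finite \'etale group scheme, so $[N]$ is finite \'etale. Translation by $G[N](\kbar)$ on the source realizes the full deck group, so $[N]$ is Galois. The relation $[MN] = [M] \circ [N]$ arranges these covers into a tower, producing a Galois-equivariant homomorphism
\[
\alpha \colon \pi_{1}^{(p')}(G_{\kbar}, e) \longrightarrow \varprojlim_{N} G[N](\kbar).
\]

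To show $\alpha$ is an isomorphism, it suffices to prove that the covers $\{[N]\}$ are cofinal among pointed connected finite \'etale covers of $G_{\kbar}$ of degree prime to $p$. Given such a cover $f \colon (Y, e') \to (G_{\kbar}, e)$ of degree $d$, I would invoke the structural result that $Y$ admits a unique commutative algebraic group structure with identity $e'$ for which $f$ is a homomorphism. In the abelian-variety case this is the Serre--Lang theorem \cite[Chapter~IV, 18]{Mumford_AV}; for general commutative algebraic groups it is carried out in \cite{Br_Sz} by combining the Chevalley--Rosenlicht structure theorem with the vanishing of $\pi_{1}^{(p')}$ for unipotent groups. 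Granted this, $f$ is an isogeny with finite kernel $K$ of order $d$. For any $N$ divisible by $d$ one has $K \subseteq Y[N]$, so $[N]_{Y} \colon Y \to Y$ factors as $g \circ f$ for a unique $g \colon G_{\kbar} \to Y$; using the surjectivity of $f$ together with the identity $f \circ [N]_{Y} = [N]_{G_{\kbar}} \circ f$, one deduces $f \circ g = [N]_{G_{\kbar}}$, which exhibits $[N]_{G_{\kbar}}$ as factoring through $f$, whence cofinality.

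The hard part is the structural input that any pointed connected finite \'etale cover of $G_{\kbar}$ of prime-to-$p$ degree is already an isogeny of commutative algebraic groups. In characteristic zero the Chevalley decomposition reduces this to Serre--Lang for abelian varieties plus the elementary computation $\pi_{1}(\G_{m, \kbar}) \cong \varprojlim \mu_{N}(\kbar)$, propagated through extensions by a vector group. In characteristic $p$ one must additionally use that the unipotent part contributes nothing to $\pi_{1}^{(p')}$, so that all prime-to-$p$ covers descend through the semi-abelian quotient of $G$, where Serre--Lang again suffices.
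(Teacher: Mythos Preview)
Your argument is correct, but it takes a somewhat different path from the paper's. The paper first invokes \cite[Proposition~1.1]{Br_Sz} to conclude that $\pi_1^{(p')}(G_{\kbar},e)$ is already abelian, which reduces the computation to the inverse limit over finite \emph{abelian} prime-to-$p$ Galois covers; it then cites \cite[Remark~4.3]{Br_Sz}, which says that $[N]\colon G_{\kbar}\to G_{\kbar}$ is the largest abelian \'etale Galois cover of exponent $N$, and the isomorphism follows immediately. Your route instead establishes cofinality of the $[N]$ among \emph{all} connected prime-to-$p$ covers, which needs the stronger structural input (Serre--Lang for general commutative algebraic groups) that every such cover is already an isogeny. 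Both arguments ultimately rest on the same circle of ideas in \cite{Br_Sz}, but the paper's two-step version is more modular: it isolates the abelianness of $\pi_1^{(p')}$ as a separate fact and then only needs cofinality among abelian covers, where the argument is essentially trivial. Your factorization argument for cofinality (factor $[N]_Y$ through $Y/K \cong G_{\kbar}$ and then use surjectivity of $f$ to deduce $f\circ g=[N]_{G_{\kbar}}$) is clean; just be sure to note that one may take $N=d$, which is already prime to $p$, so that $[N]$ remains in the indexing system.
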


\begin{proof}
 By \cite[Proposition~1.1]{Br_Sz}, $\pi_{1}^{(p')}(G_{\kbar},e)$ is abelian, whence isomorphic to the maximal prime-to-$p$ quotient of $\varprojlim_{Y} \Gal(Y/G_{\kbar})$, where $Y$ runs over the finite, \'etale, abelian Galois covers of $G_{\kbar}$. Thus $$\pi_{1}^{(p')}(G_{\kbar},e) \cong  \varprojlim_Y \Gal(Y/G_{\kbar})^{(p')}$$ where $\Gal(Y/G_{\kbar})^{(p')}$ denotes the maximal prime-to-$p$ quotient of $\Gal(Y/G_{\kbar})$. The quotient $\Gal(Y/G_{\kbar})^{(p')}$ is naturally isomorphic to the Galois group of a finite, \'etale, abelian cover $Z_Y \to G_{\kbar}$ of degree prime-to-$p$. It follows that $\pi_{1}^{(p')}(G_{\kbar},e) \cong \varprojlim_Z \Gal(Z/ G_{\kbar})$ where $Z$ runs over\hidden{ the finite, \'etale, abelian Galois covers of $G_{\kbar}$ of exponent prime-to-$p$} such covers. By \cite[Remark~4.3]{Br_Sz}, for $N$ prime-to-$p$, $$0 \to G_{\kbar}[N] \to G_{\kbar} \to G_{\kbar} \to 0$$ is the largest abelian, \'etale, Galois cover of $G$ of exponent $N$, whence $$\pi_{1}^{(p')}(G_{\kbar},e) \cong \varprojlim G[N](\bar{k})$$ as claimed.
\end{proof}

Let $Z$ be a proper scheme over $k$. By \cite[Section~8.2, Theorem 3]{BLR}\hidden{this seems to only assume proper}, $\Pic Z$ is represented by a locally finite type scheme over $k$, which is automatically a commutative group object. Let $\Pic^0 Z$ denote the connected component of the identity $e$ of $\Pic Z$, and $(\Pic^0 Z)_{\red}$ denote the reduced closed subscheme. By \cite[Proposition 4.6.1]{egaIV_2}, $(\Pic^0 Z)_{\red} \times_k (\Pic^0 Z)_{\red}$ is reduced, and it follows that $(\Pic^0)_{\red}$ is a commutative group scheme. By generic smoothness, $(\Pic^0)_{\red}$ is an algebraic group.

For $Z$ as above, let $\NS Z_{\kbar}$ denote the N\'eron--Severi group of connected components of $\Pic Z_{\kbar}$ \cite[p.~234]{BLR}, $$\NS Z_{\kbar}= \Pic Z_{\kbar} (\kbar) / \Pic^0 Z_{\kbar} (\kbar) \cong \Pic Z (\kbar)/ \Pic^0 Z (\kbar).$$ 

\begin{pr} \label{Pic0_to_H^1_NS[ell^N]}
	For $N$ prime-to-$p$, there is a short exact sequence of functors $$0 \to \Pic ^0 (-)_{\kbar}[N](\kbar) \to \rH^1((-)_{\kbar}, \Z/N(1)) \to \NS (-)_{\kbar} [N] \to 0 $$ from proper, geometrically connected schemes over $k$ of locally finite type to $\Z/N$-modules with $\Gal(\kbar/k)$-action.
\end{pr}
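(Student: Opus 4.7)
The plan is to derive the short exact sequence from the Kummer sequence combined with the standard exact sequence defining the N\'eron--Severi group. First, I would apply the long exact cohomology sequence associated to the Kummer sequence
$$1 \to \mu_N \to \G_m \xrightarrow{(-)^N} \G_m \to 1$$
on the small \'etale site of $Z_{\kbar}$, where $Z$ is a proper, geometrically connected $k$-scheme of locally finite type. Since $Z_{\kbar}$ is proper and geometrically connected, $\rH^0(Z_{\kbar},\G_m) = \kbar^*$, which is divisible, so the connecting map identifies $\rH^1(Z_{\kbar},\mu_N)$ with the $N$-torsion of $\rH^1(Z_{\kbar},\G_m) = \Pic Z_{\kbar}(\kbar)$. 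Identifying $\mu_N$ with $\Z/N(1)$, this gives a natural $\Gal(\kbar/k)$-equivariant isomorphism
$$\rH^1(Z_{\kbar},\Z/N(1)) \;\cong\; \Pic Z_{\kbar}[N](\kbar).$$

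Next, I would apply the snake lemma (or just the long exact sequence of $N$-torsion) to the short exact sequence of abelian groups
$$0 \to \Pic^0 Z_{\kbar}(\kbar) \to \Pic Z_{\kbar}(\kbar) \to \NS Z_{\kbar} \to 0,$$
which is $\Gal(\kbar/k)$-equivariant and functorial in $Z$. Taking $N$-torsion produces the four-term sequence
$$0 \to \Pic^0 Z_{\kbar}[N](\kbar) \to \Pic Z_{\kbar}[N](\kbar) \to \NS Z_{\kbar}[N] \to \Pic^0 Z_{\kbar}(\kbar)/N.$$
The key input is then that the final term vanishes. As noted in the paragraph preceding the proposition, $(\Pic^0 Z)_{\red}$ is a commutative algebraic group over the perfect field $k$, and its $\kbar$-points coincide with $\Pic^0 Z(\kbar) = \Pic^0 Z_{\kbar}(\kbar)$. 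By Lemma~\ref{Gkbardiv}, these $\kbar$-points form an $N$-divisible group for $N$ prime to $p$, so the cokernel of multiplication by $N$ vanishes, yielding the short exact sequence
$$0 \to \Pic^0 Z_{\kbar}[N](\kbar) \to \Pic Z_{\kbar}[N](\kbar) \to \NS Z_{\kbar}[N] \to 0.$$

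Splicing with the Kummer identification then produces the desired sequence
$$0 \to \Pic^0(-)_{\kbar}[N](\kbar) \to \rH^1((-)_{\kbar},\Z/N(1)) \to \NS(-)_{\kbar}[N] \to 0.$$
Functoriality in $Z$ and $\Gal(\kbar/k)$-equivariance follow from the functoriality and equivariance of each ingredient: the Kummer sequence, the identification $\rH^1(-,\G_m) \cong \Pic$, the formation of $\Pic^0$ and $\NS$, and the fixed isomorphism $\mu_N \cong \Z/N(1)$. The one subtle step is the divisibility of $\Pic^0 Z_{\kbar}(\kbar)$, which is the only place where we need $k$ perfect (to ensure $(\Pic^0 Z)_{\red}$ is a group scheme over $k$) and where $N$ being prime to $p$ is used.
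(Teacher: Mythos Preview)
Your proposal is correct and follows essentially the same approach as the paper: identify $\rH^1((-)_{\kbar},\Z/N(1))$ with $\Pic(-)_{\kbar}[N](\kbar)$ via the Kummer sequence, then take $N$-torsion in the defining sequence of $\NS$ and kill the cokernel term using Lemma~\ref{Gkbardiv} applied to $(\Pic^0)_{\red}$. The only cosmetic difference is that you phrase the Kummer step via $\rH^0(Z_{\kbar},\G_m)=\kbar^{*}$, whereas the paper simply invokes that $\kbar^{*}$ is $N$-divisible; both amount to the same observation.
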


\begin{proof}
The Kummer exact sequence $$1 \to \mu_N \to \G_m \to \G_m \to 1 $$ and the fact that $\kbar^*$ is $N$-divisible gives an identification $$  \rH^1((-)_{\kbar}, \Z /N (1)) = \rH^1((-)_{\kbar}, \mu_N)\cong \rH^1((-)_{\kbar}, \G_m)[N].$$ 

By the definition of $\Pic$, its $N$-torsion points over $\kbar$ are $$\Pic (-)_{\kbar} [N] (\kbar) = \rH^1((-)_{\kbar}, \G_m)[N].$$ 

Thus the claim is equivalent to showing a natural exact sequence $$0 \to \Pic ^0 (-)_{\kbar}[N](\kbar) \to\Pic (-)_{\kbar} [N] (\kbar) \to \NS (-)_{\kbar} [N] \to 0 .$$

By the definition of the N\'eron--Severi group $$0 \to \Pic^0 (-)_{\kbar}(\kbar) \to  \Pic (-)_{\kbar}(\kbar) \to \NS (-)_{\kbar} \to 0$$ is exact, and it follows that $$0 \to \Pic^0 (-)_{\kbar}[N](\kbar) \to  \Pic (-)_{\kbar}[N](\kbar) \to \NS (-)_{\kbar}[N] \to \Pic^0 (-)_{\kbar}(\kbar) / N\Pic^0 (-)_{\kbar}(\kbar)$$ is also exact.

Note the canonical isomorphism $\Pic^0 (-)_{\kbar}(\kbar) \cong (\Pic^0 (-)_{\kbar})_{\red}(\kbar)$. By the above, $(\Pic^0 (-)_{\kbar})_{\red}$ is an algebraic group, so by Lemma~\ref{Gkbardiv}, $$(\Pic^0 (-)_{\kbar})_{\red}(\kbar)/N(\Pic^0 (-)_{\kbar})_{\red}(\kbar) \cong 0,$$ proving the claim.
\end{proof}

\begin{proof} (of Proposition~\ref{natl_iso_pi_1Pic=H1}) 
By Proposition~\ref{pi1comgrp}, we have a natural isomorphism of functors $ \pi_1^{\ell}(-,e) \circ \Pic ^0  (-)_{\kbar}= \varprojlim_N \Pic^0(-)_{\kbar} [N](\kbar)$ where $N$ ranges over the powers of $\ell$, because the $\kbar$-points and \'etale fundamental group of $\Pic^0(-)_{\kbar}$ can be identified with those of its reduction, and $(\Pic ^0  (-)_{\kbar})_{\red}$ is a commutative algebraic group. 

By Proposition~\ref{Pic0_to_H^1_NS[ell^N]}, we have exact sequences  $$0 \to \Pic^0(-)_{\kbar} [N]\to \rH^1((-)_{\kbar}, \Z/N(1)) \to \NS (-)_{\kbar} [N] \to 0 $$ where $N$ ranges over the powers of $\ell$.

\hidden{By Lemma~\ref{Gkbardiv} and the identification of the $\kbar$-points of $\Pic^0$ and $\Pic^0_{\red}$, the multiplication by $M$ maps $  \Pic^0 [N  M] \to \Pic^0 [N]$ for $M$ a positive integer are surjective, whence $\varprojlim_N^1  \Pic^0 [N] = 0$. Thus we have an exact sequence \begin{equation*}0 \to \varprojlim_N \Pic^0 [N] \to \varprojlim_N \rH^1(-, \Z/N(1)) \to \varprojlim_N \NS[N] \to 0 \end{equation*} where $N$ runs over the powers of $\ell$.

This yields $$0 \to \pi_1^{\ell} (\Pic^0, e) \to \rH^1(X_{\kbar}, \Z_{\ell}(1)) \to \varprojlim_N \NS [N]  \to 0 $$ where $N$ runs over the powers of $\ell$ and the transition maps are given by multiplication by powers of $\ell$\hidden{By \cite[p.~164]{Milnebook} $ \rH^1(-, \Z_{\ell}) =\varprojlim \rH^1(-, \Z/\ell^N) $ -- indeed this is the definition.}}

Taking the inverse limit gives the exact sequence $$\xymatrix{0 \ar[r] & \pi_1^{\ell} (\Pic^0(-)_{\kbar}, e) \cong \varprojlim_N \Pic^0(-)_{\kbar} [N] \ar[d] \\ & \rH^1((-)_{\kbar}, \Z_{\ell}(1)):=\varprojlim_N \rH^1((-)_{\kbar}, \Z/N(1)) \ar[r] & \varprojlim_N \NS (-)_{\kbar} [N].}$$

Since $\NS (-)_{\kbar}$ is finitely generated \cite[8.4, Theorem~7]{BLR}, \hidden{the needed hypothesis are : $Z \to \Spec k$ proper, and locally of finite presentation} multiplication by $\ell^n$ is the $0$ map for large enough $n$, and if follows that $\varprojlim_N \NS (-)_{\kbar} [N] = 0$.

\end{proof}

\section{A Mayer--Vietoris sequence for a pushout of schemes}\label{appendix_MV}

Consider diagrams of finite type schemes over $k$ of the form $$\xymatrix{ V \ar[r]^i \ar[d]_p & W\\ Z &} $$ with $i$ a closed immersion, and $p$ finite. By \cite[Theorem~5.4]{ferrand03}, the pushout \begin{equation}\label{VWZWbar_pushout_CD}\xymatrix{ V \ar[r]^i \ar[d]_p & W \ar[d]^{\overline{p}} \\ Z \ar[r]_{\overline{i}} & \overline{W}}  \end{equation} exists and commutes with base change by field extensions \cite[Lemma~4.4]{ferrand03}. The resulting morphisms $\overline{p}$ and $\overline{i}$ are finite, with $\overline{i}$ a closed immersion \cite[Theorem~5.4]{ferrand03}. 

This appendix proves a Mayer--Vietoris sequence in cohomology for pushouts of this form, and then obtains a truncation of such a sequence in homology. The truncation results from the fact that we only define $\rH_1$ and $\rH_0$. It is not mathematically essential.

\begin{tm} \label{MV_cohomology_appendix}
	There is a functorial association of a long exact sequence of $\Gal(\kbar/k)$-modules $$ \ldots \to \rH^n(\overline{W}_{\kbar},R)  \to \rH^n(W_{\kbar},R) \oplus \rH^n(Z_{\kbar},R) \to \rH^n (V_{\kbar}, R) \to \rH^{n+1}(\overline{W}_{\kbar},R) \to \ldots$$ to a pushout diagram \eqref{VWZWbar_pushout_CD}.
\end{tm}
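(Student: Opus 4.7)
The plan is to construct a short exact sequence of étale sheaves on $\overline{W}_{\kbar}$ and apply the long exact sequence of cohomology. Specifically, I would consider
$$0 \to R \to \overline{p}_{*} R \oplus \overline{i}_{*} R \to (\overline{p} \circ i)_{*} R \to 0$$
on $\overline{W}_{\kbar}$, where the left arrow is the diagonal induced by the adjunction units $R \to \overline{p}_{*}\overline{p}^{*}R$ and $R \to \overline{i}_{*}\overline{i}^{*}R$, and the right arrow is the difference of the two natural maps $\overline{p}_{*} R \to (\overline{p} \circ i)_{*} R$ and $\overline{i}_{*} R \to (\overline{i} \circ p)_{*} R = (\overline{p} \circ i)_{*} R$, the latter identification coming from commutativity of the pushout square.

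To prove exactness, I would check it on geometric stalks. By \cite[Lemma~4.4]{ferrand03} the diagram \eqref{VWZWbar_pushout_CD} remains a pushout after base change to $\kbar$, and by \cite[Theorem~5.4]{ferrand03} all four morphisms are finite, so the geometric stalk of each pushforward at a geometric point $\bar{x}$ of $\overline{W}_{\kbar}$ is the direct sum of $R$ over the geometric fiber. A case distinction then suffices: if $\bar{x} \notin \overline{i}(Z_{\kbar})$, then the pushout property forces $\overline{p}^{-1}(\bar{x})$ to be a single point and the terms involving $Z$ and $V$ vanish, giving $0 \to R \to R \oplus 0 \to 0 \to 0$; if $\bar{x} = \overline{i}(\bar{z})$, the pushout property identifies $\overline{p}^{-1}(\bar{x})$ with $p^{-1}(\bar{z}) \subset V_{\kbar}$, and the sequence of stalks becomes
$$0 \to R \to R^{p^{-1}(\bar{z})} \oplus R \to R^{p^{-1}(\bar{z})} \to 0,$$
with the usual diagonal and difference maps (and the convention $R^{\emptyset} = 0$), which is elementary to verify.

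Next, I would apply $\rH^{*}(\overline{W}_{\kbar}, -)$. Since the morphisms $\overline{p}$, $\overline{i}$, and $\overline{p} \circ i$ are finite, they are acyclic for torsion étale sheaves, so the Leray spectral sequences degenerate and give canonical isomorphisms $\rH^{n}(\overline{W}_{\kbar}, \overline{p}_{*} R) \cong \rH^{n}(W_{\kbar}, R)$, and similarly for the other two pushforwards. The long exact sequence of cohomology of the short exact sequence above is then exactly the asserted Mayer--Vietoris sequence for $R = \Z/\ell^{n}$; for $R = \Z_{\ell}$ one takes the inverse limit and applies Mittag--Leffler, which is valid because the relevant $\rH^{n}(-, \Z/\ell^{n})$ are finite (the schemes being of finite type over $k$, with the pushforwards exact on constructible sheaves). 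Functoriality and $\Gal(\kbar/k)$-equivariance follow automatically from the naturality of adjunction units, pushforwards along finite morphisms, and long exact cohomology sequences; everything in the construction respects morphisms of pushout diagrams and commutes with base change to $\kbar$.

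The main obstacle is the stalkwise verification in the second step, since one must carefully combine the pushout description of geometric points of $\overline{W}_{\kbar}$ with the finiteness of the structural morphisms. Once these are in hand, the argument is essentially formal.
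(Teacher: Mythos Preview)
Your proposal is correct and follows essentially the same strategy as the paper: build a short exact sequence of sheaves on $\overline{W}_{\kbar}$ from the pushforwards along the finite maps, then take cohomology and use exactness of pushforward by a finite morphism to identify the terms. The only cosmetic difference is that the paper verifies exactness by writing the square of sheaves as a morphism of two short exact sequences and identifying both kernels with $\overline{j}_! R((\overline{W}-Z)_{\kbar})$ via the isomorphism $W-V \cong \overline{W}-Z$ from \cite[Theorem~5.4]{ferrand03}, whereas you collapse this into a single four-term sequence and check exactness directly on stalks using that same isomorphism together with the fact (also from Ferrand) that the square is cartesian; these are equivalent verifications of the same exact sequence.
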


Here the adjective ``functorial" means that a map of pushout diagrams of the form \eqref{VWZWbar_pushout_CD} induces a map of long exact sequences.

\begin{proof}
Let $f: V \to \overline{W}$ be defined $f =  \overline{i} p= \overline{p} i$. Let $R(V_{\kbar})$ denote the constant sheaf with stalk $R$ on the \'etale site of $V_{\kbar}$ for $R=\Z/\ell^m$ and the corresponding $\ell$-adic sheaf for $R=\Z_{\ell}$ \cite[p.~163]{Milnebook}, with similar definitions for $W$, $Z$, and $\overline{W}$. Note that $\overline{p}^* R(\overline{W}_{\kbar}) \cong R(W_{\kbar})$, giving a natural map $R(\overline{W}_{\kbar}) \to \overline{p}_* R(W_{\kbar})$.

The horizontal arrows in the commutative diagram $$\xymatrix{ f_* R(V_{\kbar}) &\ar[l] \overline{p}_* R(W_{\kbar})\\ \ar[u] \overline{i}_*R(Z_{\kbar})& \ar[l] \ar[u] R(\overline{W}_{\kbar})  } $$ are surjective. Indeed, to check surjectivity, it is enough to verify the condition on stalks \cite[II, Theorem~2.15(c)]{Milnebook}, and surjectivity on stalks follows from \cite[II, Corollary~3.5(a)]{Milnebook}. The resulting morphism of short exact sequences \begin{equation}\label{MV_app_SES_sheaves} \xymatrix{ 0 & \ar[l] f_* R(V_{\kbar}) &\ar[l] \overline{p}_* R(W_{\kbar}) & \ar[l] \Ker \overline{p}_*(  i_* i^*\leftarrow \operatorname{id}) & \ar[l] 0 \\ 0 & \ar[l] \ar[u] \overline{i}_*R(Z_{\kbar})& \ar[l] \ar[u] R(\overline{W}_{\kbar}) & \ar[l] \ar[u]^N \Ker( \overline{i}_* \overline{i}^* \leftarrow \operatorname{id} ) & \ar[l] 0 } \end{equation} is such that $N$ is an isomorphism. To see that $N$ is an isomorphism, note that by \cite[II, Theorem~2.15(b) and II, Corollary~3.5(c)]{Milnebook} pushforward by a finite morphism is an exact functor between categories of \'etale sheaves, whence $\Ker \overline{p}_*(i_* i^* \leftarrow \operatorname{id}) = \overline{p}_* \Ker (i_* i^* \leftarrow \operatorname{id})$. Let $j: W-V \hookrightarrow W$ be the open immersion corresponding to the complement of $V$. The exact sequence \cite[II, p.~76]{Milnebook} $$0 \to j_! j^* R \to R \to i_* i^* R \to 0 $$ gives $ \Ker (i_* i^* \leftarrow \operatorname{id}) \cong j_! R((W-V)_{\kbar})$. Similarly, $\Ker ( \overline{i}_* \overline{i}^* \leftarrow \operatorname{id} ) \cong \overline{j}_! R((\overline{W} - Z)_{\kbar})$, where $\overline{j}: \overline{W} -Z \hookrightarrow \overline{W}$ is the open immersion. Since $\overline{p}$ induces an isomorphism $W - V \to \overline{W} - Z$ (\cite[Theorem~5.4(d)]{ferrand03}) and $\overline{p}^{-1} Z \subset V$ (\cite[Theorem~5.4(b)]{ferrand03}), $N$ is an isomorphism by checking on all stalks.

A morphism of short exact sequences of sheaves with isomorphic kernels $$\xymatrix{0 & \ar[l] A &\ar[l] B &\ar[l] C & \ar[l] 0 \\ 0 &\ar[l]  A' \ar[u] &\ar[l]  B' \ar[u]& \ar[l] C' \ar[u]^{\cong} &\ar[l]  0  } $$ gives rise to a Mayer--Vietoris sequence $$\ldots \to \rH^n(B') \to \rH^n(B) \oplus \rH^n(A') \to \rH^n(A) \to \rH^{n+1}(B') \to \ldots .$$ Applying this principle to \eqref{MV_app_SES_sheaves} gives a long exact sequence \begin{equation}\label{messier_MV_appendix}\xymatrix{ \ldots \ar[r] & \rH^n(\overline{W}_{\kbar},R)  \ar[r] & \rH^n(\overline{W}_{\kbar},\overline{p}_*R(W_{\kbar})) \oplus \rH^n(\overline{W}_{\kbar}, \overline{i}_*R(Z_{\kbar})) \ar[r] &  \rH^n (\overline{W}_{\kbar}, f_* R(V_{\kbar}))  \\ \ar[r] & \rH^{n+1}(\overline{W}_{\kbar},R) \ar[r] & \ldots.}\end{equation}
 
Since pushforward by a finite morphism is an exact functor between categories of \'etale sheaves, $\rH^n(W_{\kbar}, R)  \cong \rH^n(\overline{W}_{\kbar}, \overline{p}_* R(W_{\kbar})) $, and there are similar expressions for $\rH^n (V_{\kbar},R) $ and $ \rH^n(Z_{\kbar},R)$. Making these substitutions into \eqref{messier_MV_appendix} completes the proof.

\end{proof}

Let $Y$ be a $k$-scheme. Let $\rH_0(Y_{\kbar}, R)$ denote the free $R$-module on the connected components $\pi_0(Y_{\kbar})$ of $Y_{\kbar}$. Since $\pi_0(Y_{\kbar})$ has a continuous Galois action, $\rH_0(Y_{\kbar}, R)$ is a $\Gal(\kbar/k)$-module. For $y \in \pi_0(Y_{\kbar})$, let $(Y_{\kbar})_y$ denote the corresponding connected component. For $R = \Z/\ell^n$ (or $R$ a finite abelian group), define $\rH_1(Y_{\kbar}, R)$ as the direct sum $$\rH_1(Y_{\kbar}, R) \cong \oplus_{y \in \pi_0Y_{\kbar}} ( \pi_1(Y_{\kbar})_y^{\ab} \otimes R),$$ where $\pi_1(Y_{\kbar})_y^{\ab}$ denotes the abelianization of the \'etale fundamental group of $ \pi_1(Y_{\kbar})_y$. For $R = \Z_{\ell}$, define $\rH_1(Y_{\kbar}, \Z_{\ell})$ as $$\rH_1(Y_{\kbar}, \Z_{\ell})\cong \oplus_{y \in \pi_0Y_{\kbar}}~\pi_1(Y_{\kbar})_y^{\ell-\ab},$$ where $ \pi_1(Y_{\kbar})_y^{\ell-\ab}$ denotes the abelianization of the $\ell$-\'etale fundamental group, i.e.,~the abelianization of the maximal pro-$\ell$ quotient of the \'etale fundamental group. Note that $\pi_1(Y_{\kbar})_y^{\ab}$ is independent of the choice of base point. The $\Gal(\kbar/k)$-action on \'etale paths in $Y_{\kbar}$ gives $\rH_1(Y_{\kbar}, R)$ the structure of a $\Gal(\kbar/k)$-module. More specifically, for each $y \in \pi_0(Y_{\kbar})$ choose a base point $b_y$, which may not or may not have a $k$-rational point as its image, with which to define $\pi_1((Y_{\kbar})_y, b_y)$. Then $\sigma \in \Gal(\kbar/k)$ determines a map $\pi_1((Y_{\kbar})_y, b_y) \to \pi_1(Y_{\kbar}, \sigma_* b_y)$. Then $\sigma_* b_y$ has image in some connected component of $Y_{\kbar}$, say the connected component $w$. The canonical isomorphism $$\pi_1((Y_{\kbar})_w, b_w)^{\ab} \cong \pi_1((Y_{\kbar})_w, \sigma_* b)^{\ab}$$ allows $\sigma$ to act on $\rH_1(Y_{\kbar},R)$.

\hidden{

\begin{claim}\label{Hom_to_Zellm_exact}
	 Let $m_1, m_2, m$ be non-negative integers with $m_1 + m_2 = m$. Then applying $\Hom_{\Z/m} (-, \Z/m)$ to the short exact sequence $$0 \to \Z/\ell^{m_1} \to \Z/\ell^m \to \Z/ \ell^{m_2} \to 0 $$ is the short exact sequence $$0 \leftarrow \Z/\ell^{m_1} \leftarrow \Z/\ell^m \leftarrow \Z/ \ell^{m_2} \leftarrow 0 .$$
\end{claim}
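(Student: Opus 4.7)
The plan is to prove the claim by invoking the self-injectivity of $\Z/\ell^m$ as a module over itself, together with an explicit identification of the dual of each $\Z/\ell^{k}$. First I would observe that $\Z/\ell^m$ is self-injective: it is a quotient of the principal ideal domain $\Z$ by a nonzero ideal, hence a zero-dimensional Gorenstein (in fact, Frobenius) ring. Alternatively, Baer's criterion can be checked directly, since every ideal of $\Z/\ell^m$ is of the form $\ell^k \Z/\ell^m$, and any $\Z/\ell^m$-linear map $\ell^k \Z/\ell^m \to \Z/\ell^m$ sends $\ell^k$ to an element annihilated by $\ell^{m-k}$, i.e.,~to $\ell^k y$ for some $y \in \Z/\ell^m$, and hence extends to $\Z/\ell^m$ via multiplication by $y$.

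Second, self-injectivity immediately yields the exactness of $\Hom_{\Z/\ell^m}(-, \Z/\ell^m)$ applied to any short exact sequence of $\Z/\ell^m$-modules, producing an exact sequence of the shape claimed, modulo the identification of each term.

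Third, I would identify the terms by computing $\Hom_{\Z/\ell^m}(\Z/\ell^k, \Z/\ell^m)$ for $0 \le k \le m$. Any homomorphism $\varphi$ is determined by $\varphi(1)$, which must lie in the $\ell^k$-torsion $\ell^{m-k}\Z/\ell^m \cong \Z/\ell^k$. The map $\varphi \mapsto \varphi(1)$ thus gives a canonical isomorphism $\Hom_{\Z/\ell^m}(\Z/\ell^k, \Z/\ell^m) \cong \Z/\ell^k$, which applied to $k = m_1, m, m_2$ yields the three terms of the dual sequence.

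I do not anticipate any significant obstacle: the claim is a routine computation in commutative algebra whose content is that $\Z/\ell^m$ is a dualizing module for itself, so the short exact sequence of cyclic quotients dualizes to a short exact sequence of the same shape with the roles of $\Z/\ell^{m_1}$ and $\Z/\ell^{m_2}$ reversed.
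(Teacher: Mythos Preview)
Your proposal is correct. The paper's proof is more hands-on and avoids naming the abstract notion of self-injectivity: it identifies $\Hom(\Z/\ell^{m_1},\Z/\ell^m)\cong\Z/\ell^{m_1}$ by noting that any such homomorphism lands in the unique subgroup of order $\ell^{m_1}$, and then checks directly that the restriction map $\Hom(\Z/\ell^m,\Z/\ell^m)\to\Hom(\Z/\ell^{m_1},\Z/\ell^m)$ carries the identity to a generator, hence is surjective. Your Baer-criterion verification is essentially this same computation, repackaged as the statement that $\Z/\ell^m$ is self-injective; invoking the Frobenius/Gorenstein property gives a cleaner one-line reason for exactness at the cost of citing a general structural fact, while the paper's argument is entirely self-contained at the level of generators. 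Either way the identification of the three Hom groups is the same.
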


\begin{proof}
There is a unique subgroup $H$ of $\Z/\ell^m$ of order $\Z/ \ell^{m_1}$. The inclusion induces an isomorphism $\Hom(\Z/\ell^{m_1}, H) \cong \Hom(\Z/\ell^{m_1}, \Z/\ell^m).$ Since $\Hom(\Z/\ell^{m_1}, H) \cong \Hom(\Z/\ell^{m_1}, \Z/\ell^{m_1}) \cong \Z/\ell^{m_1}$, we have $\Hom(\Z/\ell^{m_1}, \Z/\ell^m) \cong \Z/\ell^{m_1}$. Apply  $\Hom_{\Z/m} (-, \Z/m)$ to the injection $\Z/\ell^{m_1} \hookrightarrow \Z/\ell^m$. The generator $\sigma$ of $\Hom_{\Z/\ell^m} (\Z/\ell^m, \Z/\ell^m)$ is determined by $\sigma(1) = 1$. The image of $\sigma$ sends $1$ to $\sigma(\ell^{m - m_2})$, i.e.,~$1 \mapsto \ell^{m-m_2}$. Note that this is the generator of $\Hom_{\Z/\ell^m} (\Z/\ell^{m_1}, \Z/\ell^m)$. The claim follows. \end{proof}

\end{proof}}

We prove the following technical lemma to avoid considering the question of finite generation for $\pi_1^{\ell}$. Note that $\rH_1(Y_{\kbar}, \Z/\ell^m)$ has a natural topology induced from the topology on \'etale fundamental groupoids.\hidden{ To see this, it suffices to put a topology on $\prod_{y \in \pi_0(Y_{\kbar})} (\pi_1((Y_{\kbar})_y)^{\ab} \otimes \Z/\ell^m)$, whence to put a topology on $\pi_1((Y_{\kbar})_y)^{\ab} \otimes \Z/\ell^m$. This tensor product has an induced topology, which is described as the quotient topology from $\pi_1((Y_{\kbar})_y)^{\ell, \ab} \otimes \Z/\ell^m \to \pi_1({Y}_y)^{\ab} \otimes \Z/\ell^m$ or, equivalently, as the inverse limit topology on $\pi_1((Y_{\kbar})_y)^{\ab} \otimes \Z/\ell^m \cong \varprojlim (\Gal(\overline{Y}/(Y_{\kbar})_y \otimes \Z/\ell^m)$.} For a topological $\Z/\ell^m$-module $M$, let $M^{\star} = \Hom(M, \Z/\ell^m )$ be the continuous homomorphisms $M \to \Z/\ell^m$. 

\begin{lm} \label{fMgamma}
For each $y \in \pi_0(Y_{\kbar})$, let $M_y \subseteq \rH_1((Y_{\kbar})_y, \Z/\ell^m)$ be a closed sub-$\Z/\ell^m$-module, and let $M = \oplus_y M_y$. For any $\gamma$ in $\rH_1(Y_{\kbar}, \Z/\ell^m)$ with $\gamma \notin M$, there exists $f \in \rH_1(Y_{\kbar}, \Z/\ell^m)^{\star}$ such that $f(\gamma) \neq 0$ and $f(m) = 0$ for all $m \in M$.
\end{lm}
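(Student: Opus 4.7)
The statement is a separation result for continuous $\Z/\ell^m$-linear functionals on the topological $\Z/\ell^m$-module $\rH_1(Y_{\kbar},\Z/\ell^m)$. My plan is to reduce to a single connected component of $Y_{\kbar}$ and then exploit the fact that $\Z/\ell^m$ is an injective cogenerator for finite $\Z/\ell^m$-modules, together with the profiniteness of $\rH_1$.

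First, since $M = \oplus_y M_y$ and a continuous $\Z/\ell^m$-linear functional on the direct sum is the sum of its restrictions to the summands, the hypothesis $\gamma \notin M$ forces some component $\gamma_{y_0}$ of $\gamma$ to fail to lie in $M_{y_0}$. It therefore suffices to construct a continuous $\Z/\ell^m$-linear map
\[
 f_{y_0} \colon \rH_1((Y_{\kbar})_{y_0},\Z/\ell^m) \longrightarrow \Z/\ell^m
\]
that kills $M_{y_0}$ but not $\gamma_{y_0}$, and then extend by zero on the other components.

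Next, I would use that $\rH_1((Y_{\kbar})_{y_0},\Z/\ell^m) = \pi_1((Y_{\kbar})_{y_0})^{\ab} \otimes_{\Z_\ell} \Z/\ell^m$ is profinite, being the quotient of the profinite group $\pi_1((Y_{\kbar})_{y_0})^{\ab}$ by the closed (because compact) subgroup $\ell^m \pi_1((Y_{\kbar})_{y_0})^{\ab}$, endowed with the quotient topology. Since $M_{y_0}$ is closed, the quotient $Q := \rH_1((Y_{\kbar})_{y_0},\Z/\ell^m)/M_{y_0}$ is again a profinite $\Z/\ell^m$-module. As $\gamma_{y_0} \notin M_{y_0}$, its image $\bar\gamma_{y_0}$ in $Q$ is nonzero, and therefore has nonzero image in some finite $\Z/\ell^m$-module quotient $Q \twoheadrightarrow N$ by the very definition of the profinite topology (a profinite module is the inverse limit of its finite quotients, and a nonzero element survives in some finite quotient).

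Finally, to detect a nonzero element of the finite $\Z/\ell^m$-module $N$ by a $\Z/\ell^m$-linear map to $\Z/\ell^m$, I would invoke the elementary fact that $\Z/\ell^m$ is an injective cogenerator for finite $\Z/\ell^m$-modules: decompose $N \cong \bigoplus_i \Z/\ell^{m_i}$ with $m_i \le m$, pick an index $i$ on which the image of $\gamma_{y_0}$ is nonzero, and compose the projection to the $i$th summand with the injection $\Z/\ell^{m_i} \hookrightarrow \Z/\ell^m$ sending $1$ to $\ell^{m-m_i}$. Composing the continuous surjections $\rH_1((Y_{\kbar})_{y_0},\Z/\ell^m) \twoheadrightarrow Q \twoheadrightarrow N$ with this homomorphism yields the desired $f_{y_0}$. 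No step poses a substantive obstacle; the only point to be careful about is to verify that the topology on $\pi_1^{\ab}\otimes \Z/\ell^m$ matches the quotient topology on $\pi_1^{\ab}/\ell^m\pi_1^{\ab}$, so that "closed submodule" and "profinite quotient" behave as expected.
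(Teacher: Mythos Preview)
Your proposal is correct and follows essentially the same approach as the paper: reduce to a single connected component, use profiniteness to pass to a finite $\Z/\ell^m$-module quotient in which the image of $\gamma$ still lies outside the image of $M$, and then separate by a homomorphism to $\Z/\ell^m$ using that finite $\Z/\ell^m$-modules embed in their double dual. The only cosmetic differences are that the paper phrases the finite quotient as $\Gal(\overline{Y}/Y)\otimes\Z/\ell^m$ for a suitable finite abelian cover and passes to the finite quotient before modding out by the image of $M$, whereas you quotient by $M_{y_0}$ first; both orderings work.
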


\begin{proof}
We may assume $k = \kbar$. By definition, $$\rH_1(Y, \Z/\ell^m) = \oplus_{y \in \pi_0(Y)} (\pi_1({Y}_y)^{\ab} \otimes \Z/\ell^m).$$ Since $\gamma$ is not in $M$ and $M = \oplus_y M_y$, there exists $y \in \pi_0(Y)$ such that the image $\gamma_{y}$ of $\gamma$ under $$\rH_1(Y, \Z/\ell^m) \to \prod_{y \in \pi_0(Y)} (\pi_1({Y}_y)^{\ab} \otimes \Z/\ell^m) \to \pi_1({Y}_y)^{\ab} \otimes \Z/\ell^m  $$ is not contained in $M_y$. Using the map $(\pi_1({Y}_y)^{\ab} \otimes \Z/\ell^m)^{\star} \to \rH_1(Y_{\kbar}, \Z/\ell^m)^{\star}$, we may assume that $Y$ is connected. Since $\pi_1(Y)^{\ab} \otimes \Z/\ell^m$ is profinite, there is a finite quotient such that the image of $\gamma$ is not contained in the image of $M$, as otherwise the intersection over all finite quotients of those elements of $M$ with the same image as $\gamma$ would be non-empty by compactness of $M$, which in turn would imply that $\gamma$ is in $M$. Thus there exists an abelian finite \'etale cover $\overline{Y} \to Y$ such that the image of $\gamma$ under \begin{equation}\label{H_1_to_Galois_projection}\rH_1(Y, \Z/\ell^m)  \to  \Gal(\overline{Y} / Y) \otimes \Z/\ell^m\end{equation} is not contained in the image of $M$. The $\Z/\ell^m$-module $(\Gal(\overline{Y} / Y) \otimes \Z/\ell^m)/M$ is finitely generated (even finite), whence \hidden{(Claim~\ref{Hom_to_Zellm_exact})} the natural map  $$(\Gal(\overline{Y} / Y) \otimes \Z/\ell^m)/M \to (((\Gal(\overline{Y} / Y) \otimes \Z/\ell^m)/M)^{\star})^{\star} $$ is an isomorphism. In particular, we may choose $\overline{f} \in ((\Gal(\overline{Y} / Y) \otimes \Z/\ell^m)/M)^{\star}$ such that $\overline{f}(\gamma) \neq 0$. Let $f$ be the image of $\overline{f}$ under the map $$((\Gal(\overline{Y} / Y) \otimes \Z/\ell^m)/M)^{\star} \to (\Gal(\overline{Y} / Y) \otimes \Z/\ell^m)^{\star} \to\rH_1(Y_{\kbar}, \Z/\ell^m)^{\star}.$$
\end{proof}

As a corollary, we obtain: 

\begin{lm} \label{H1Zellm_star_star_injective}
	The natural map $\rH_1(Y_{\kbar}, \Z/\ell^m) \to (\rH_1(Y_{\kbar}, \Z/\ell^m)^{\star})^{\star}$ is injective.
\end{lm}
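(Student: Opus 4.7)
The plan is to deduce this directly from Lemma~\ref{fMgamma} by specializing the closed submodules $M_y$ to be trivial. Let $\gamma \in \rH_1(Y_{\kbar}, \Z/\ell^m)$ be any nonzero element; we must produce a continuous homomorphism $f \colon \rH_1(Y_{\kbar}, \Z/\ell^m) \to \Z/\ell^m$ with $f(\gamma) \neq 0$, for then the image of $\gamma$ under the double-dual evaluation map is nonzero, proving injectivity.

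To obtain such an $f$, set $M_y = 0$ for every $y \in \pi_0(Y_{\kbar})$, so that $M = \oplus_y M_y = 0$. Each $M_y$ is trivially a closed sub-$\Z/\ell^m$-module of $\rH_1((Y_{\kbar})_y, \Z/\ell^m)$, and the hypothesis $\gamma \notin M$ is satisfied since $\gamma \neq 0$. Lemma~\ref{fMgamma} then yields a continuous $f \in \rH_1(Y_{\kbar}, \Z/\ell^m)^{\star}$ with $f(\gamma) \neq 0$ (the vanishing condition on $M$ being vacuous). This $f$ witnesses that the image of $\gamma$ in $(\rH_1(Y_{\kbar}, \Z/\ell^m)^{\star})^{\star}$ is nonzero, completing the proof.

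There is no real obstacle here: the lemma is a formal corollary, with the only substantive input being the ability to separate a single element from $0$ by a continuous functional, which Lemma~\ref{fMgamma} already provides in far greater generality. The work has all been done in establishing Lemma~\ref{fMgamma}, in particular in passing to an appropriate finite abelian \'etale cover where the finitely generated (indeed finite) $\Z/\ell^m$-module in question is reflexive.
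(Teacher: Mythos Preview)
Your proof is correct and matches the paper's approach exactly: the paper presents this lemma with the words ``As a corollary, we obtain,'' leaving the deduction from Lemma~\ref{fMgamma} (with $M=0$) implicit, which is precisely what you have spelled out.
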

\hidden{Here is a proof independent of Lemma~\ref{fMgamma}

\begin{proof}
We may assume $k = \kbar$. By definition, $\rH_1(Y, \Z/\ell^m) = \oplus_{y \in \pi_0(Y)} (\pi_1({Y}_y)^{\ab} \otimes \Z/\ell^m)$. Suppose that $\gamma$ is a nonzero element of $\rH_1(Y, \Z/\ell^m)$. There exists $y \in \pi_0(Y)$ and an abelian finite \'etale cover $\overline{Y} \to Y_y$ such that  $\gamma$ has a nonzero image under \begin{equation}\label{H_1_to_Galois_projection}\rH_1(Y, \Z/\ell^m) \to \prod_{y \in \pi_0(Y)} (\pi_1({Y}_y)^{\ab} \otimes \Z/\ell^m) \to \pi_1({Y}_y)^{\ab} \otimes \Z/\ell^m \to  \Gal(\overline{Y} / Y_y) \otimes \Z/\ell^m.\end{equation} The $\Z/\ell^m$-module $\Gal(\overline{Y} / Y_y) \otimes \Z/\ell^m$ is finitely generated, whence \hidden{(Claim~\ref{Hom_to_Zellm_exact})} the natural map  $$\Gal(\overline{Y} / Y_y) \otimes \Z/\ell^m \to ((\Gal(\overline{Y} / Y_y) \otimes \Z/\ell^m)^{\star})^{\star} $$ is an isomorphism. In particular, we may choose $f \in (\Gal(\overline{Y} / Y_y) \otimes \Z/\ell^m)^{\star}$ such that $f(\gamma) \neq 0$. Using \eqref{H_1_to_Galois_projection}, $f$ determines an element of $\rH_1(Y_{\kbar}, \Z/\ell^m)^{\star}$ such that $f(\gamma) \neq 0$, showing the claimed injectivity.
\end{proof}}

We use these lemmas to show the Mayer--Vietoris sequence in homology used above. Assume that $V_{\kbar}$, $Z_{\kbar}$,$W_{\kbar}$ and $\overline{W}_{\kbar}$ have finitely many connected components. 

\begin{tm} \label{homology_MV_appendix}
	There is a functorial exact sequence of $\Gal(\kbar/k)$-modules \begin{align*}\rH_1 (V_{\kbar}, R)  \to \rH_1(W_{\kbar},R) \oplus \rH_1(Z_{\kbar},R) \to  \rH_1(\overline{W}_{\kbar},R) \to \\  \rH_{0} (V_{\kbar}, R)  \to \rH_0(W_{\kbar},R) \oplus \rH_0(Z_{\kbar},R) \to  \rH_0(\overline{W}_{\kbar},R) \to 0.\end{align*}
\end{tm}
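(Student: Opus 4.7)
The plan is to dualize the initial six terms of the cohomology Mayer--Vietoris sequence of Theorem~\ref{MV_cohomology_appendix}, using a Pontryagin-style duality between the profinite $R$-modules $\rH_i((-)_{\kbar},R)$ and the discrete $R$-modules $\rH^i((-)_{\kbar},R)$ for $i = 0, 1$.

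First I would establish natural $\Gal(\kbar/k)$-equivariant identifications $\rH^i(Y_{\kbar}, R) \cong \rH_i(Y_{\kbar}, R)^{\star}$ for $i = 0, 1$ and $Y \in \{V, W, Z, \overline{W}\}$. The case $i = 0$ uses the finiteness of $\pi_0(Y_{\kbar})$: $\rH^0(Y_{\kbar}, R) = R^{\pi_0(Y_{\kbar})}$ is $R$-linearly dual to the free $R$-module $\rH_0(Y_{\kbar}, R) = R[\pi_0(Y_{\kbar})]$. For $R = \Z/\ell^m$, the group $\rH^1(Y_{\kbar}, R)$ classifies $R$-torsors, which after choosing a base point in each component correspond to continuous homomorphisms $\pi_1(Y_{\kbar})^{\ab} \to R$; since $R$ is annihilated by $\ell^m$ these factor through $\pi_1(Y_{\kbar})^{\ab} \otimes R = \rH_1(Y_{\kbar}, R)$, yielding the identification, which is functorial in $Y$.

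Next I would truncate the sequence of Theorem~\ref{MV_cohomology_appendix} at degree one to obtain a six-term exact sequence of discrete $R$-modules
\begin{equation*}
0 \to \rH^0(\overline{W}_{\kbar}, R) \to \rH^0(W_{\kbar}, R) \oplus \rH^0(Z_{\kbar}, R) \to \rH^0(V_{\kbar}, R) \to \rH^1(\overline{W}_{\kbar}, R) \to \rH^1(W_{\kbar}, R) \oplus \rH^1(Z_{\kbar}, R) \to \rH^1(V_{\kbar}, R),
\end{equation*}
and apply $(-)^{\star}$, producing the candidate homology sequence
\begin{equation*}
\rH_1(V_{\kbar}, R) \to \rH_1(W_{\kbar}, R) \oplus \rH_1(Z_{\kbar}, R) \to \rH_1(\overline{W}_{\kbar}, R) \to \rH_0(V_{\kbar}, R) \to \rH_0(W_{\kbar}, R) \oplus \rH_0(Z_{\kbar}, R) \to \rH_0(\overline{W}_{\kbar}, R) \to 0,
\end{equation*}
the terminal surjectivity encoding the initial injectivity in the cohomology sequence. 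Functoriality in the pushout diagram is inherited from that of Theorem~\ref{MV_cohomology_appendix} together with naturality of the duality identifications.

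The main obstacle is verifying that $(-)^{\star}$ preserves exactness at each intermediate spot of the dualized sequence. The input is Lemma~\ref{fMgamma}: at any given spot, the image of the preceding map decomposes naturally as $\oplus_{y \in \pi_0} M_y$ with $M_y$ a closed submodule, since the morphisms in the Mayer--Vietoris sequence respect the connected-component decomposition of schemes. Given $\gamma$ in the kernel of the succeeding map but not in the image, Lemma~\ref{fMgamma} supplies a functional $f \in \rH_1(Y_{\kbar}, R)^{\star}$ vanishing on the image with $f(\gamma) \neq 0$; translating $f$ to the corresponding cohomology group via the identification above and appealing to exactness of the cohomology sequence forces $f$ to vanish on $\gamma$, yielding a contradiction. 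For $R = \Z_{\ell}$, one takes the inverse limit over $m$ of the $\Z/\ell^m$-sequences; since the transition maps on every $\rH_i$ are surjective, being induced by the surjections $\Z/\ell^{m+1} \twoheadrightarrow \Z/\ell^m$, Mittag--Leffler applies and the limit sequence remains exact.
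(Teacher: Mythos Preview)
Your approach is essentially the paper's, and the core argument---separate a purported failure of exactness via Lemma~\ref{fMgamma}, then contradict exactness of the cohomology sequence---is exactly what the paper does. Two places where you are looser than the paper deserve comment.

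First, ``apply $(-)^\star$ to the cohomology sequence'' does not literally produce maps between the groups $\rH_i((-)_{\kbar},R)$: dualizing lands you in the double duals $(\rH_i^\star)^\star$, and only an injection $\rH_i \hookrightarrow (\rH_i^\star)^\star$ is available (Lemma~\ref{H1Zellm_star_star_injective}), not an isomorphism. The paper instead defines the non-boundary maps directly via functoriality of $\rH_n$ and constructs only the boundary map $\rH_1(\overline{W}_{\kbar},R) \to \rH_0(V_{\kbar},R)$ as a composite through $\rH^1(\overline{W}_{\kbar},R)^\star$ and $\rH^0(V_{\kbar},R)^\star$; it then checks that applying $(-)^\star$ to this homology sequence recovers the cohomology sequence, and uses Lemma~\ref{H1Zellm_star_star_injective} to verify separately that the homology sequence is a complex. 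Your contradiction argument only establishes $\Ker \subseteq \Image$, so you still need that complex step. Second, your claim that the image at each spot decomposes as $\oplus_y M_y$ over connected components is not obvious at $\rH_1(W_{\kbar},R)\oplus\rH_1(Z_{\kbar},R)$: a class from a component of $V_{\kbar}$ lands as a correlated pair in one $W$-summand and one $Z$-summand, and such pairs need not assemble into a direct sum over $\pi_0(W_{\kbar})\sqcup\pi_0(Z_{\kbar})$. The paper's invocation of Lemma~\ref{fMgamma} glosses over the same point; in either case the image is closed (being the continuous image of a profinite group), and the proof of Lemma~\ref{fMgamma} goes through once one passes to a finite quotient of the full profinite module rather than of a single component.
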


\begin{proof}
Let $n=0,1$.  Functoriality of $\rH_n$ defines maps $$ \rH_n(i) \times \rH_n(p): \rH_n(V_{\kbar}, R) \to  \rH_n(W_{\kbar},R) \oplus \rH_n(Z_{\kbar},R)$$ $$ \rH_n(i) \times \rH_n(p) (x) = \rH_n(i) x \oplus \rH_n(p) x$$   $$-\rH_n(\overline{p}) \oplus \rH_n(\overline{i}): \rH_n(W_{\kbar},R) \oplus \rH_n(Z_{\kbar},R) \to \rH_n(\overline{W}_{\kbar}, R)$$ $$- \rH_n(\overline{p}) \oplus \rH_n(\overline{i})(x) = - \rH_n(\overline{p}) (x)+ \rH_n(\overline{i})(x).$$

Suppose first that $R = \Z/\ell^m$. The isomorphism $ \rH^1((Y_{\kbar})_y, R) \cong \Hom(\pi_1(Y_{\kbar})_y, R )$ between the first \'etale cohomology group and continuous homomorphisms from the fundamental group (this isomorphism can be obtained from, for instance, \cite[III, Section~4, Section~2 Corollary~2.10 and I, Section~5]{Milnebook}) determines an isomorphism \begin{equation}\label{H1YR=HomH_1R} \rH^1(Y_{\kbar}, R) \cong \Hom(\rH_1(Y_{\kbar}, R), R ),\end{equation} where $\Hom$ again denotes continuous homomorphisms.

The isomorphism \eqref{H1YR=HomH_1R} determines a map $\rH_1(\overline{W}_k, R) \to  \rH^1(\overline{W}_{\kbar}, R)^{\star}$. Dualizing the boundary map in Theorem~\ref{MV_cohomology_appendix} gives a map $$ \rH^1(\overline{W}_{\kbar}, R)^{\star} \to \rH^0(\overline{W}_{\kbar}, R)^{\star}.$$ By assumption $\overline{W}_{\kbar}$ has finitely many connected components, identifying $\prod_{\pi_0 \overline{W}_{\kbar}} R$ and $\oplus_{\pi_0 \overline{W}_{\kbar}} R$, and giving an isomorphism $$\rH^0(\overline{W}_{\kbar}, R)^{\star} \cong \rH_0 (\overline{W}_{\kbar}, R).$$ Use the resulting composite $$ \rH_1(\overline{W}_k, R) \to  \rH^1(\overline{W}_{\kbar}, R)^{\star} \to \rH^0(\overline{W}_{\kbar}, R)^{\star} \cong \rH_0 (\overline{W}_{\kbar}, R) $$ to define the boundary map in the statement we are proving. Together with $\rH_n(i) \times \rH_n(p)$ and $- \rH_n(\overline{p}) \oplus \rH_n(\overline{i})$, this defines the sequence in the statement.

Note that, with this definition, applying $\Hom( -, R)$ to the sequence of Theorem~\ref{homology_MV_appendix} gives the sequence of Theorem~\ref{MV_cohomology_appendix}. It thus follows from Lemma~\ref{H1Zellm_star_star_injective} that the composition of adjacent maps in the sequence of Theorem~\ref{homology_MV_appendix} is $0$, i.e.,~this sequence is a complex. We claim this complex is exact. Suppose to the contrary that exactness fails at $$ M_{j+1} \to M_j \to M_{j-1},$$ i.e.,~there exists $\gamma$ in $\Ker(M_j \to M_{j-1})$ which is not in $\operatorname{Image}(M_{j+1} \to M_j).$ Choose $f \in M_j^{\star}$ vanishing on the image of $M_{j+1}$ and nonzero on $\gamma$, using Lemma~\ref{fMgamma}. Then $f$ is in the kernel of $M_j^{\star} \to M_{j+1}^{\star}$. However, $f$ can not be in the image of $M_{j-1}^{\star} \to M_j^{\star}$ because any element of this image sends $\gamma$ to $0$.

The quotient map $\Z/\ell^{m+1} \to \Z/\ell^m$ determines maps $\rH_n (Y_{\kbar}, \Z/\ell^{m+1}) \to \rH_n (Y_{\kbar}, \Z/\ell^{m})$. It is tautological to check that these maps induce maps between the corresponding exact sequences in the statement. The case $R = \Z_{\ell}$ then follows from the isomorphisms $\rH_n (Y_{\kbar}, \Z_{\ell}) \cong \varprojlim_m \rH_n (Y_{\kbar}, \Z/\ell^m)$ and exactness of inverse limits on profinite groups.

\end{proof}

%
%
%
\bibliographystyle{AJPD}

\bibliography{Kass+Wickelgren+Abel}

\newcommand{\etalchar}[1]{$^{#1}$}
\providecommand{\bysame}{\leavevmode\hbox to3em{\hrulefill}\thinspace}
\providecommand{\MR}{\relax\ifhmode\unskip\space\fi MR }
\providecommand{\MRhref}[2]{%
  \href{http://www.ams.org/mathscinet-getitem?mr=#1}{#2}
}
\providecommand{\href}[2]{#2}
\begin{thebibliography}{SGA$4\frac{1}{2}$}

\bibitem[SGA$3{}_{III}$]{SGA3_8_11}
Michael Artin, Jean-{\'E}tienne Bertin, Michel Demazure, Peter Gabriel,
  Alexander Grothendieck, Michel Raynaud, and Jean-Pierre Serre,
  \emph{Sch\'emas en groupes. {F}asc. 3: {E}xpos\'es 8 \`a 11}, S\'eminaire de
  G\'eom\'etrie Alg\'ebrique de l'Institut des Hautes \'Etudes Scientifiques,
  vol. 1963/64, Institut des Hautes \'Etudes Scientifiques, Paris, 1964.
  \MR{MR0207705 (34 \#7520)}

\bibitem[AK80]{altman80}
Allen~B. Altman and Steven~L. Kleiman, \emph{Compactifying the {P}icard
  scheme}, Adv. in Math. \textbf{35} (1980), no.~1, 50--112. \MR{555258
  (81f:14025a)}

\bibitem[AK90]{altman90}
\bysame, \emph{The presentation functor and the compactified {J}acobian}, The
  {G}rothendieck {F}estschrift, {V}ol.\ {I}, Progr. Math., vol.~86,
  Birkh\"auser Boston, Boston, MA, 1990, pp.~15--32. \MR{1086881 (92e:14023)}

\bibitem[Ari11]{Arinkin_line_bundles}
Dima Arinkin, \emph{Cohomology of line bundles on compactified {J}acobians},
  Math. Res. Lett. \textbf{18} (2011), no.~6, 1215--1226. \MR{2915476}

\bibitem[Ari13]{Arinkin}
\bysame, \emph{Autoduality of compactified {J}acobians for curves with plane
  singularities}, J. Algebraic Geom. \textbf{22} (2013), no.~2, 363--388.
  \MR{3019453}

\bibitem[Ati61]{Atiyah}
Michael~F. Atiyah, \emph{Thom complexes}, Proc. London Math. Soc. (3)
  \textbf{11} (1961), 291--310. \MR{0131880 (24 \#A1727)}

\bibitem[Bho92]{bhosle}
Usha Bhosle, \emph{Generalized parabolic bundles and applications to
  torsionfree sheaves on nodal curves}, Ark. \textbf{30} (1992), 187--215.

\bibitem[BLR90]{BLR}
Siegfried Bosch, Werner L{\"u}tkebohmert, and Michel Raynaud, \emph{N\'eron
  models}, Ergebnisse der Mathematik und ihrer Grenzgebiete (3) [Results in
  Mathematics and Related Areas (3)], vol.~21, Springer-Verlag, Berlin, 1990.
  \MR{1045822 (91i:14034)}

\bibitem[BS13]{Br_Sz}
Michel Brion and Tam{\'a}s Szamuely, \emph{Prime-to-{$p$} \'etale covers of
  algebraic groups and homogeneous spaces}, Bull. Lond. Math. Soc. \textbf{45}
  (2013), no.~3, 602--612. \MR{3065029}

\bibitem[SGA$4\frac{1}{2}$]{sga4andhalf}
Pierre Deligne, \emph{Cohomologie \'etale}, Lecture Notes in Mathematics, Vol.
  569, Springer-Verlag, Berlin, 1977, S{\'e}minaire de G{\'e}om{\'e}trie
  Alg{\'e}brique du Bois-Marie SGA 4${1{\o}er 2}$, Avec la collaboration de
  J.~F.~Boutot, A.~Grothendieck, L.~Illusie et J.~L.~Verdier. \MR{0463174 (57
  \#3132)}

\bibitem[EGK00]{esteves00}
Eduardo Esteves, Mathieu Gagn{\'e}, and Steven~L. Kleiman, \emph{Abel maps and
  presentation schemes}, Comm. Algebra \textbf{28} (2000), no.~12, 5961--5992,
  Special issue in honor of Robin Hartshorne.

\bibitem[EGK02]{esteves2002}
\bysame, \emph{Autoduality of the compactified {J}acobian}, J. London Math.
  Soc. (2) \textbf{65} (2002), no.~3, 591--610. \MR{1895735 (2003d:14038)}

\bibitem[ER13]{Esteves_Flavio}
Eduardo Esteves and Fl{\'a}vio Rocha, \emph{Autoduality for treelike curves
  with planar singularities}, Bull. Braz. Math. Soc. (N.S.) \textbf{44} (2013),
  no.~3, 413--420. \MR{3124743}

\bibitem[Fer03]{ferrand03}
Daniel Ferrand, \emph{Conducteur, descente et pincement}, Bull. Soc. Math.
  France \textbf{131} (2003), no.~4, 553--585. \MR{2044495 (2005a:13016)}

\bibitem[Fri82]{Friedlander}
Eric~M. Friedlander, \emph{\'{E}tale homotopy of simplicial schemes}, Annals of
  Mathematics Studies, vol. 104, Princeton University Press, Princeton, N.J.,
  1982. \MR{676809 (84h:55012)}

\bibitem[EGAIV${}_2$]{egaIV_2}
Alexander Grothendieck and Jean~A. Dieudonn\'e, \emph{\'{E}l\'ements de
  g\'eom\'etrie alg\'ebrique. {IV}. \'{E}tude locale des sch\'emas et des
  morphismes de sch\'emas. {II}}, Inst. Hautes \'Etudes Sci. Publ. Math.
  (1965), no.~24, 231. \MR{MR0199181 (33 \#7330)}

\bibitem[Hu05]{Hu}
Po~Hu, \emph{On the {P}icard group of the stable {$\Bbb A^1$}-homotopy
  category}, Topology \textbf{44} (2005), no.~3, 609--640. \MR{2122218
  (2005m:14035)}

\bibitem[Isa04]{Isaksen}
Daniel~C. Isaksen, \emph{Etale realization on the {$\Bbb A^1$}-homotopy theory
  of schemes}, Adv. Math. \textbf{184} (2004), no.~1, 37--63. \MR{2047848
  (2005d:14029)}

\bibitem[Kas12]{kass12}
Jesse~L. Kass, \emph{An explicit non-smoothable component of the compactified
  {J}acobian}, J. Algebra \textbf{370} (2012), 326--343. \MR{2966842}

\bibitem[Kas13]{kass13}
\bysame, \emph{Singular curves and their compactified {J}acobians}, A
  celebration of algebraic geometry, Clay Math. Proc., vol.~18, Amer. Math.
  Soc., Providence, RI, 2013, pp.~391--427. \MR{3114949}

\bibitem[KK81]{Kleppe81}
Hans Kleppe and Steven~L. Kleiman, \emph{Reducibility of the compactified
  {J}acobian}, Compositio Math. \textbf{43} (1981), no.~2, 277--280. \MR{622452
  (83f:14027)}

\bibitem[Mil80]{Milnebook}
James~S. Milne, \emph{\'{E}tale cohomology}, Princeton Mathematical Series,
  vol.~33, Princeton University Press, Princeton, N.J., 1980. \MR{559531
  (81j:14002)}

\bibitem[Moc10]{Mochizuki}
Shinichi Mochizuki, \emph{Topics in absolute anabelian geometry i:
  generalities}, Available at
  \url{http://www.kurims.kyoto-u.ac.jp/~motizuki/papers-english.html}, 2010.

\bibitem[MRV12]{MRV}
Margarida Melo, Antonio Rapagnetta, and Filippo Viviani,
  \emph{{F}ourier-{M}ukai and autoduality for compactified {J}acobians. i},
  arXiv:1207.7233, 2012.

\bibitem[Mum70]{Mumford_AV}
David Mumford, \emph{Abelian varieties}, Tata Institute of Fundamental Research
  Studies in Mathematics, No. 5, Published for the Tata Institute of
  Fundamental Research, Bombay, 1970. \MR{0282985 (44 \#219)}

\bibitem[Mum75]{mumford75}
\bysame, \emph{Curves and their {J}acobians}, The University of Michigan Press,
  Ann Arbor, Mich., 1975. \MR{0419430 (54 \#7451)}

\bibitem[OS79]{oda}
Tadao Oda and C.~S. Seshadri, \emph{Compactifications of the generalized
  {J}acobian variety}, Trans. Amer. Math. Soc. \textbf{253} (1979), 1--90.
  \MR{536936 (82e:14054)}

\bibitem[SGAI]{sga1}
\emph{Rev\^etements \'etales et groupe fondamental ({SGA} 1)}, Documents
  Math\'ematiques (Paris) [Mathematical Documents (Paris)], 3, Soci\'et\'e
  Math\'ematique de France, Paris, 2003, S{\'e}minaire de g{\'e}om{\'e}trie
  alg{\'e}brique du Bois Marie 1960--61. [Algebraic Geometry Seminar of Bois
  Marie 1960-61], Directed by A.~Grothendieck, With two papers by M.~Raynaud,
  Updated and annotated reprint of the 1971 original [Lecture Notes in Math.,
  224, Springer, Berlin; MR0354651 (50 \#7129)]. \MR{MR2017446 (2004g:14017)}

\end{thebibliography}



\end{document}